	\tikzset{commutative diagrams/column sep/nixx/.initial=-10pt}
\newcommand\cyr{%
\renewcommand\rmdefault{wncyr}%
\renewcommand\sfdefault{wncyss}%
\renewcommand\encodingdefault{OT2}%
\normalfont
\selectfont}
\DeclareTextFontCommand{\textcyr}{\cyr}
\newcommand{\mc}[1]{{\mathcal{#1}}}			
\newcommand{\ms}[1]{{\mathscr{#1}}}			
\newcommand{\mf}[1]{{\mathfrak{#1}}}			
\newcommand{\bb}[1]{{\mathbb{#1}}}			
\newcommand{\qu}{\overline}				
\newcommand{\mr}{\mathring}				
\DeclareMathOperator{\RE}{Re}\renewcommand{\Re}{\RE}	
\DeclareMathOperator{\tr}{tr}				
\DeclareMathOperator{\dom}{dom}				
\DeclareMathOperator{\ran}{ran}				
\DeclareMathOperator{\ess}{ess}				
\DeclareMathOperator{\supp}{supp}			
\newcommand{\smmatrix}[4]{\Bigl(			
\begin{smallmatrix}
\hspace*{-0.2ex} #1 \hspace*{0.2ex} & \hspace*{0.2ex} #2 \hspace*{-0.2ex} 
\\[0.5ex]
\hspace*{-0.2ex} #3 \hspace*{0.2ex} & \hspace*{0.2ex} #4 \hspace*{-0.2ex}
\end{smallmatrix}
\Bigr)}
\newcommand{\DS}{\colon\mkern3mu}			
\newcommand{\DD}{\mkern4mu}				
\newcommand{\DP}{{.\kern5pt}}				
\newcommand{\DF}{\colon}				
\newcommand{\DE}{\mathrel{\mathop:}=}			
\newcommand{\ED}{=\mathrel{\mathop:}}			
\newcommand{\OI}{{\mf I}}				
\newcommand{\SI}{{\mf S}}				
\newcommand{\LI}[1]{{\mf S}_{[{#1}]}}			
\newcommand{\Orl}[1]{{\mf S}_{\llbracket{#1}\rrbracket}}
\newcommand{\SLI}[1]{{\mf S}_{[{#1}]}^{\bm\circ}}	
\newcommand{\Seq}[1]{{\rm Seq}({#1})}			
\newcommand{\OpA}[1]{A_{[{#1}]}}			
\newcommand{\OpB}[1]{B_{[{#1}]}}			
\newcommand{\OpC}[1]{C_{[{#1}]}}			
\newcommand{\Rot}[2]{
	\circlearrowleft_{{#1}}\mkern-5mu{{#2}}}
\DeclareMathOperator{\Diag}{diag}			
\newcounter{Enum}					
\newenvironment{Enumerate}{\begin{enumerate}[label={\rm({\roman*})}]}{\end{enumerate}}
\newcommand{\Enumref}[1]{{\setcounter{Enum}{#1}{\rm(\roman{Enum})}}}
\newcommand{\descriptionlabelsave}{}			
\newenvironment{Itemize}{%
	\renewcommand{\descriptionlabelsave}{\descriptionlabel}\renewcommand{\descriptionlabel}{$\triangleright$}%
	\begin{description}[leftmargin=15pt,itemindent=-5.2pt]}{%
	\end{description}\renewcommand{\descriptionlabel}{\descriptionlabelsave}}
\newcounter{StepsCount}					
\newenvironment{Steps}{%
	\begin{list}{\ding{\value{StepsCount}}}{\usecounter{StepsCount} \leftmargin=0pt \labelwidth=12pt \itemindent=\labelwidth%
	\itemsep=5pt\listparindent=\parindent} \setcounter{StepsCount}{191}}{\end{list}}
\numberwithin{equation}{section}
\theoremstyle{plain}
	\newtheorem{lemma}{Lemma}[section]
	\newtheorem{theorem}[lemma]{Theorem}
	\newtheorem{corollary}[lemma]{Corollary}
	\newtheorem{ntheoreM}[lemma]{}
\theoremstyle{definition}
	\newtheorem{definitioN}[lemma]{Definition}
	\newtheorem{ndefinitioN}[lemma]{}
\theoremstyle{remark}
	\newtheorem{remarK}[lemma]{Remark}
	\newtheorem{examplE}[lemma]{Example}
	\newtheorem{nremarK}[lemma]{}
\newcommand{\thlab}[1]{\thlabel{#1}\label{#1 }}
\renewcommand{\qedsymbol}{\raisebox{-2pt}{\large\ding{113}}}
\newcommand{\defendsymbol}{$\lozenge$}
\newcommand{\qedsymbolsave}{\qedsymbol}
\newenvironment{definition}{\begin{definitioN}}{
	\renewcommand{\qedsymbolsave}{\qedsymbol}\renewcommand{\qedsymbol}{\defendsymbol}
	\popQED{\qed}\renewcommand{\qedsymbol}{\qedsymbolsave}\end{definitioN}}
\newenvironment{remark}{\begin{remarK}}{
	\renewcommand{\qedsymbolsave}{\qedsymbol}\renewcommand{\qedsymbol}{\defendsymbol}
	\popQED{\qed}\renewcommand{\qedsymbol}{\qedsymbolsave}\end{remarK}}
\newenvironment{example}{\begin{examplE}}{
	\renewcommand{\qedsymbolsave}{\qedsymbol}\renewcommand{\qedsymbol}{\defendsymbol}
	\popQED{\qed}\renewcommand{\qedsymbol}{\qedsymbolsave}\end{examplE}}
\begin{document}

\begin{flushleft}
	{\Large\bf Canonical systems with discrete spectrum}
	\\[5mm]
	\textsc{
	Roman Romanov
	\,\ $\ast$\,\ 
	Harald Woracek
		\hspace*{-14pt}
		\renewcommand{\thefootnote}{\fnsymbol{footnote}}
		\setcounter{footnote}{2}
		\footnote{The second author was supported the project P\,30715--N35 of the Austrian Science Fund.}
		\renewcommand{\thefootnote}{\arabic{footnote}}
		\setcounter{footnote}{0}
	}
	\\[6mm]
	{\small
	\textbf{Abstract:}
	We study spectral properties of two-dimensional canonical systems $y'(t)=zJH(t)y(t)$, $t\in[a,b)$, 
	where the Hamiltonian $H$ is locally integrable on $[a,b)$, positive semidefinite, and Weyl's limit point case 
	takes place at $b$. We answer the following questions explicitly in terms of $H$:
	\begin{quote}
		{Is the spectrum of the associated selfadjoint operator discrete ?}
		\\[1mm]
		{If it is discrete, what is its asymptotic distribution ?} 
	\end{quote}
	Here asymptotic distribution means summability and limit superior conditions relative to 
	comparison functions growing sufficiently fast. 
	Making an analogy with complex analysis, this correponds to convergence class and type w.r.t.\ proximate 
	orders having order larger than $1$. 
	It is a surprising fact that these properties depend only on the diagonal entries of $H$. 

	In 1968 L.de~Branges posed the following question as a fundamental problem:
	\begin{quote}
		{Which Hamiltonians are the structure Hamiltonian of some\\ de~Branges space ?}
	\end{quote}
	We give a complete and explicit answer.
	\\[3mm]
	{\bf AMS MSC 2010:} 37J05, 34L20, 45P05, 46E22
	\\
	{\bf Keywords:} canonical system, discrete spectrum, eigenvalue distribution, operator ideal, 
	Volterra operator, de~Branges space
	}
\end{flushleft}


%
%
%
\section{Introduction}

We study the spectrum of the selfadjoint model operator associated with a two-dimensional canonical system 
\begin{equation}\label{Q101}
	y'(t)=zJH(t)y(t),\quad t\in[a,b)
	.
\end{equation}
Here $H$ is the Hamiltonian of the system, $-\infty<a<b\leq\infty$, $J$ is the symplectic matrix $J\DE\smmatrix 0{-1}10$, 
and $z\in\bb C$ is the eigenvalue parameter. We assume throughout that $H$ satisfies 
\begin{Itemize}
\item $H\in L^1\big([a,c),\bb R^{2\times 2}\big)$, $c\in(a,b)$, and $\{t\in[a,b)\DS H(t)=0\}$ has measure $0$,
\item $H(t)\geq 0$, $t\in[a,b)$ a.e.\ and $\int_a^b\tr H(t)\DD dt=\infty$. 
\end{Itemize}
Differential equations of this form orginate from Hamiltonian mechanics, and appear frequently in theory and applications. 
Various kinds of equations can be rewritten to the form \eqref{Q101}, and several problems of classical analysis 
can be treated with the help of canonical systems. For example we mention 
Schr\"odinger operators \cite{remling:2002}, 
Dirac systems \cite{sakhnovich:2002},
or the extrapolation problem of stationary Gaussian processes via Bochners theorem \cite{krein.langer:2014}. 
Other instances can be found, e.g., in \cite{kac:1999,kac:1999a}, \cite{kaltenbaeck.winkler.woracek:bimmel}, 
\cite{akhiezer:1961}, or \cite{arov.dym:2008}. 

The direct and inverse spectral theory of the equation \eqref{Q101} was developed in \cite{gohberg.krein:1967,debranges:1968}. 
Recent references are \cite{remling:2018,romanov:1408.6022v1}.

With a Hamiltonian $H$ a Hilbert space $L^2(H)$ is associated, and in $L^2(H)$ a selfadjoint operator $\OpA{H}$
is given by the differential expression \eqref{Q101} and by prescribing the boundary condition $(1,0)y(a)=0$ 
(in one exceptional situation $\OpA{H}$ is a multivalued operator, but this is only a technical difficulty). 
This operator model behind \eqref{Q101} was given its final form in \cite{kac:1983,kac:1984}. A more accessible reference 
is \cite{hassi.snoo.winkler:2000}, and the relation with de~Branges' work on Hilbert spaces of entire functions 
was made explicit in \cite{winkler:1993,winkler:1995}. 

In the present paper we answer the following questions:
\begin{quote}
	\textit{Is the spectrum of $\OpA{H}$ discrete ?}
	\\[1mm]
	\textit{If $\sigma(\OpA{H})$ is discrete, what is its asymptotic distribution ?} 
\end{quote}
The question about asymptotic distribution is understood as the problem of finding the convergence exponent 
and the upper density of eigenvalues in terms of the Hamiltonian. 

\subsubsection*{Discreteness of the spectrum}

In our first theorem we characterise discreteness of $\sigma(\OpA{H})$. 

\begin{theorem}\thlab{Q102}
	Let $H=\smmatrix{h_1}{h_3}{h_3}{h_2}$ be a Hamiltonian on $[a,b)$ and assume that $\int_a^b h_1(s)\DD ds<\infty$. 
	Then $\sigma(\OpA{H})$ is discrete if and only if 
	\begin{equation}\label{Q130}
		\lim_{t\nearrow b}\Big(\int_t^b h_1(s)\DD ds\cdot\int_a^t h_2(s)\DD ds\Big)=0
		.
	\end{equation}
\end{theorem}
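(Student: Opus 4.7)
The strategy is to translate discreteness of $\sigma(\OpA{H})$ into compactness of a concrete Volterra integral operator on $L^2(H)$, reduce via the Cauchy--Schwarz bound $|h_3|^2\le h_1 h_2$ to the case of a diagonal Hamiltonian, and then invoke the classical Muckenhoupt--Tomaselli--Mazya compactness criterion for Hardy operators between weighted $L^2$-spaces.

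\textbf{Step 1 (The inverse as a Volterra operator).} Since $\OpA{H}$ is selfadjoint, $\sigma(\OpA{H})$ is discrete if and only if some (equivalently every) resolvent is compact. Under the assumption $\int_a^b h_1(s)\DD ds<\infty$ the vector $e_1=(1,0)^\top$ belongs to $L^2(H)$, and direct integration of \eqref{Q101} shows that---modulo a finite-dimensional defect coming from the boundary condition at $a$ and the possible presence of $e_2$ in $L^2(H)$---the inverse $\OpA{H}^{-1}$ is given on an appropriate subspace of $L^2(H)$ by the Volterra operator
\[
	(Vf)(t)=J\int_a^t H(s)f(s)\DD ds
	.
\]
Hence discreteness of $\sigma(\OpA{H})$ is equivalent to compactness of $V$.

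\textbf{Step 2 (Muckenhoupt criterion in the diagonal case).} If $h_3=0$ then $V$ decouples, and the relevant compactness question reduces to that of the Hardy-type operator $L^2((a,b),h_2\DD ds)\to L^2((a,b),h_1\DD ds)$ given by $f\mapsto\int_a^{\,\cdot\,} h_2(s)f(s)\DD ds$. After a standard weight change this is the classical Hardy operator between weighted $L^2$-spaces, and the Muckenhoupt--Tomaselli--Mazya criterion characterises compactness precisely by
\[
	\lim_{t\nearrow b}\Big(\int_t^b h_1(s)\DD ds\cdot\int_a^t h_2(s)\DD ds\Big)=0
	,
\]
which is \eqref{Q130}.

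\textbf{Step 3 (Eliminating the off-diagonal entry).} For general $H$ the Volterra kernel picks up $h_3$-terms. The pointwise bound $|h_3(s)|^2\le h_1(s)h_2(s)$ forced by $H(s)\ge 0$ lets one dominate these contributions by expressions built only from $h_1$ and $h_2$. Splitting $[a,b)$ into a compact initial interval (on which any bounded integral operator is Hilbert--Schmidt, hence compact) together with a tail near $b$, and applying Schur-type bounds on the tail, one obtains that compactness of the full $V$ is equivalent to compactness of its diagonal counterpart, and hence to \eqref{Q130}.

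\textbf{Main obstacle.} Step 3 is the delicate point: showing that the criterion truly depends only on the diagonal of $H$. The direction ``\eqref{Q130} $\Rightarrow$ $V$ compact'' follows readily from a Schur test using $|h_3|\le\sqrt{h_1h_2}$. The converse---that $h_3$ cannot conjure compactness when \eqref{Q130} fails---requires an explicit Weyl sequence built from intervals on which the Muckenhoupt quantity $\int_{t_n}^b h_1\cdot\int_a^{t_n} h_2$ is bounded away from zero, together with a verification that the off-diagonal coupling via $h_3$ does not destroy the non-compactness witness in $L^2(H)$.
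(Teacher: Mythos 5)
Your Steps 1 and 2 are essentially sound and parallel the paper's setup: the paper likewise reduces discreteness to compactness of the inverse, conjugates by $f\mapsto H(t)^{1/2}f(t)$ to land in $L^2(Idt)$, and in the diagonal case arrives at the single triangular kernel $\mathds{1}_{t<s}\sqrt{h_2(t)}\sqrt{h_1(s)}$ on $L^2(a,b)$. Where you cite the Muckenhoupt--Tomaselli--Mazya criterion, the paper instead runs a dyadic discretisation in the style of Aleksandrov--Janson--Peller--Rochberg and then converts the sequential condition $\lim_n\omega_n=0$ back to \eqref{Q130}; for plain compactness these routes are interchangeable, and yours is the shorter citation for this one theorem (the paper needs the discretisation anyway for the Orlicz and Lorentz cases later).

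The genuine gap is the converse half of your Step 3, i.e.\ ``$\sigma(\OpA{H})$ discrete $\Rightarrow$ \eqref{Q130}''. Writing $H^{1/2}=\smmatrix{v_1}{v_3}{v_3}{v_2}$ and letting $T_{ij}$ be the operator with kernel $\mathds{1}_{t<s}v_i(t)v_j(s)$, the conjugated inverse is the block operator
\[
	-\begin{pmatrix} T_{31}+T_{31}^* & T_{21}^*+T_{33}\\ T_{21}+T_{33}^* & T_{23}+T_{23}^*\end{pmatrix},
\]
so compactness of the full inverse only delivers compactness of $\Re T_{31}$, $\Re T_{23}$ and of the mixed sum $T_{21}+T_{33}^*$ --- never of the individual triangular pieces, and in particular not directly of $T_{21}$, which is the operator carrying the diagonal data. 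Passing from ``$\Re T$ compact'' to ``$T$ compact'' for kernels $\mathds{1}_{t<s}\varphi(t)\qu{\kappa(s)}$ is exactly the weak Matsaev theorem that the paper proves (compactness of $\Re T$ forces the off-diagonal cell sums $\sum_nP_n(\Re T)P_{n+1}$ to be compact, which pins down $\omega_n\to0$, which rebuilds $T$ as a norm-convergent series). Your proposed substitute --- a Weyl sequence supported where the Muckenhoupt quantity stays bounded below, plus a verification that the $h_3$-coupling does not cancel it --- is precisely the step you label the ``main obstacle'' and do not carry out, and it cannot be waved through with Schur-type bounds: the paper's \thref{Q132} exhibits Hamiltonians for which the off-diagonal entry genuinely changes the spectral asymptotics (below order $1$), so the independence of the criterion from $h_3$ is a borderline phenomenon that any such cancellation argument must confront quantitatively. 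As written, the proof establishes only the sufficiency of \eqref{Q130}, not its necessity.
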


\begin{remark}\thlab{Q103}
	The assumption that $\int_a^b h_1(s)\DD ds<\infty$ in \thref{Q102} is made for normalisation and is 
	no loss in generality. 
	First, a necessary condition that $0\notin\sigma_{\ess}(\OpA{H})$ is that there exists some angle $\phi\in\bb R$ 
	such that $\int_a^b(\cos\phi,\sin\phi)H(s)(\cos\phi,\sin\phi)^*\DD ds<\infty$. Second, applying 
	rotation isomorphism always allows to reduce to the case that $\phi=0$. We will give details in Section~5.2.
\end{remark}

\noindent
Let us remark that \thref{Q102} yields a new proof of the discreteness criterion for strings given by I.S.Kac and M.G.Krein 
in \cite[Theorema~4,5]{kac.krein:1958}, of \cite[Theorem~1.4]{remling.scarbrough:1811.07067v1}, and of 
\cite[Theorem~1]{kac:1995}.

\subsubsection*{Structure Hamiltonians of de~Branges spaces}

Recall that a de~Branges space $\mc H(E)$ is a reproducing kernel Hilbert space of entire functions with 
certain additional properties, whose kernel is generated by a Hermite-Biehler function $E$. For each de~Branges space there exists 
a unique maximal chain of de~Branges subspaces $\mc H(E_t)$, $t\leq 0$, contained isometrically (on exceptional intervals only 
contractively) in $\mc H(E)$. The generating Hermite-Biehler functions $E_t$ satisfy a canonical system on the interval $(-\infty,0]$ 
with some Hamiltonian $H$, and this Hamiltonian is called the structure Hamiltonian of $\mc H(E)$. 

L.de~Branges identified in \cite[Theorem~IV]{debranges:1961} a particular class of Hamiltonians which are 
structure Hamiltonians of de~Branges spaces. 
A mild generalisation of de~Branges' theorem can be found in \cite[Theorem~4.11]{linghu:2015}, 
and a further class of structure Hamiltonians is identified (in a different language) by the already mentioned work of 
I.S.Kac and M.G.Krein \cite{kac.krein:1958} and its mild generalisation \cite{remling.scarbrough:1811.07067v1}.
These classes do by far not exhaust the set of all structure Hamiltonians.
In 1968, after having finalised his theory of Hilbert spaces of entire functions, de~Branges posed the following question 
as a fundamental problem, cf.\ \cite[p.140]{debranges:1968}:
\begin{quote}
	\textit{Which Hamiltonians $H$ are the structure Hamiltonian of some\\ de~Branges space $\mathcal H(E)$ ?}
\end{quote}
In the following decades there was no significant progress towards a solution of this question. 
One result was claimed by I.S.Kac in 1995; proofs have never been published. 
He states a sufficient condition and a (different) necessary condition for $H$ to be a structure Hamiltonian. 
Unfortunately, his conditions are difficult to handle.

The connection with \thref{Q102} is the following: 
a Hamiltonian is the structure Hamiltonian of some de~Branges space $\mc H(E)$, if and only if 
the operator $\OpA{\tilde H}$ associated with the reversed Hamiltonian 
\[
	\tilde H(t)\DE\smmatrix 100{-1}H(-t)\smmatrix 100{-1},\quad t\in[0,\infty)
	,
\]
has discrete spectrum. This can be seen by a simple ``juggling with fundamental solutions''--argument. 
A proof based on a different argument was published in \cite{kac:2007}, see also \cite[Theorem~2.3]{linghu:2015}. 

Hence we obtain from \thref{Q102} a complete and explicit answer to de~Branges' question. 

\subsubsection*{Summability properties}

We turn to discussing the asymptotic distribution of $\sigma(\OpA{H})$. 
Consider a Hamiltonian $H$ with discrete spectrum. Then its spectrum is a (finite or infinite) sequence of simple eigenvalues 
without finite accumulation point. 
If $\sigma(\OpA{H})$ is finite, any questions about the asymptotic behaviour of the eigenvalues are obsolete. 
Moreover, under the normalisation that $\int_a^b h_1(s)\DD ds<\infty$, the point $0$ is not an eigenvalue of $\OpA{H}$. 
Hence, we can think of $\sigma(\OpA{H})$ as a sequence $(\lambda_n)_{n=1}^\infty$ of pairwise different real numbers 
arranged such that 
\begin{equation}\label{Q131}
	0<|\lambda_1|\leq|\lambda_2|\leq|\lambda_3|\leq\ldots 
\end{equation}
In our second theorem we characterise summability of the sequence $(\lambda_n^{-1})_{n=1}^\infty$ relative to 
suitable comparison functions. 
In particular, this answers the question whether $(\lambda_n^{-1})_{n=1}^\infty\in\ell^p$ when $p>1$. 
The only known result in this direction is \cite[Theorem~2.4]{kaltenbaeck.woracek:hskansys}, which settles the case $p=2$; 
we reobtain this theorem. 

As comparison functions we use functions $\ms g$ defined on the ray $[0,\infty)$ and taking values in $(0,\infty)$
which satisfy:
\begin{Itemize}
\item The limit $\rho_{\ms g}\DE\lim_{r\to\infty}\frac{\log\ms g(r)}{\log r}$ exists and belongs to $(0,\infty)$.
\item The function $\ms g$ is continuously differentiable with $\ms g'(r)>0$, 
	and $\lim_{r\to\infty}\frac{r\ms g'(r)}{\ms g(r)}=\rho_\ms g$.
\end{Itemize}
Functions of this kind are known as growth functions; the number $\rho_{\ms g}$ is called the order of $\ms g$.
They form a comparison scale which is finer than the scale of powers $r^\rho$. 
The history of working with growth scales other than powers probably starts with the paper \cite{lindeloef:1905}, where 
E.Lindel\"of compared the growth of an entire function with functions of the form 
\[
	\ms g(r)=r^\rho\cdot\big(\log r\big)^{\beta_1}\cdot\big(\log\log r\big)^{\beta_2}\cdot\ldots\cdot
	\big(\underbrace{\log\cdots\log}_{\text{\footnotesize$m$\textsuperscript{th}-iterate}}r\big)^{\beta_m}
	\quad\text{for $r$ large}
	.
\]
In what follows the reader may think of $\ms g(r)$ for simplicity as a concrete function of this form, or simply as a power $r^\rho$.

\begin{theorem}\thlab{Q104}
	Let $H=\smmatrix{h_1}{h_3}{h_3}{h_2}$ be a Hamiltonian on an interval $[a,b)$ such that $\int_a^b h_1(s)\DD ds<\infty$
	and that $\OpA{H}$ has discrete spectrum. 
	Moreover, assume that $h_1$ does not vanish a.e.\ on any interval $(c,b)$ with $c\in(a,b)$. 
	Let $\ms g$ be a growth function with order $\rho_\ms g>1$. Then 
	\begin{multline*}
		\sum_{n=1}^\infty\frac 1{\ms g(|\lambda_n|)}<\infty\ \Leftrightarrow\quad
		\\
		\int_a^b\bigg[
		\ms g\bigg(\Big(\int_t^b h_1(s)\DD ds\cdot\int_a^t h_2(s)\DD ds\Big)^{-\frac 12}\bigg)
		\bigg]^{-1}
		\mkern-5mu\cdot\mkern1mu
		\frac{h_1(t)\DD dt}{\int_t^b h_1(s)\DD ds}<\infty
		.
	\end{multline*}
\end{theorem}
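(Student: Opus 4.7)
My plan is to translate the summability condition into a growth condition on an associated entire function, and then convert that growth condition into an integral involving $H$.

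First, I would associate to $H$ an entire function $\Phi$ whose real zeros are exactly the eigenvalues $\lambda_n$ of $\OpA{H}$. Such a $\Phi$ is provided by de~Branges theory from the limit at $b$ of the fundamental solution of \eqref{Q101}; under the standing hypothesis $\int_a^b h_1<\infty$ we have $\Phi(0)\neq 0$, and selfadjointness of $\OpA{H}$ forces all zeros of $\Phi$ to be real and simple. Because $\rho_{\ms g}>1$, a Jensen and Lindel\"of type argument then gives the clean equivalence
\[
	\sum_{n=1}^\infty\frac 1{\ms g(|\lambda_n|)}<\infty
	\quad\Longleftrightarrow\quad
	\int_1^\infty\frac{\log|\Phi(iy)|}{\ms g(y)}\cdot\frac{dy}{y}<\infty.
\]
Here the assumption $\rho_{\ms g}>1$ is essential: it ensures that $r^{-1}\ms g(r)^{-1}$ is integrable at infinity, so that the counting function of zeros and the Nevanlinna characteristic of $\Phi$ can be interchanged through a convergence-class integral.

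Next, I would estimate $\log|\Phi(iy)|$ in terms of $h_1,h_2$. Since $\Phi$ is obtained by evaluating a row of the fundamental solution of \eqref{Q101} in the limit $t\nearrow b$, an iterated (Picard type) expansion along \eqref{Q101} for purely imaginary $z=iy$ with positive semidefinite $H$ should yield the two-sided bound
\[
	\log|\Phi(iy)|\asymp\int_{\{t\DS y^2\int_t^b h_1\cdot\int_a^t h_2\geq 1\}}\frac{h_1(t)\DD dt}{\int_t^b h_1(s)\DD ds}
\]
up to error terms controlled by the discreteness quantity of \thref{Q102}. Substituting this estimate into the convergence-class integral from the previous step, interchanging the order of integration in $(y,t)$, and performing the substitution $y=\bigl(\int_t^b h_1\cdot\int_a^t h_2\bigr)^{-1/2}$ then rewrites the equivalence as exactly the integral condition stated in the theorem.

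The main obstacle I expect is justifying the two-sided bound for $\log|\Phi(iy)|$ and, in particular, the surprising fact that the off-diagonal entry $h_3$ contributes only to subleading order. I would attack this by first reducing to diagonal Hamiltonians --- either through a Pr\"ufer type angular change of variable that absorbs $h_3$ into a monotone reparametrisation of $t$, or via a direct comparison using the positive semidefiniteness bound $|h_3|^2\leq h_1h_2$ --- and then analysing the resulting coupled system $y_1'=izh_2y_2$, $y_2'=-izh_1y_1$ via iterated integrals. The precise matching of the iterated-integral estimate with the weight $h_1(t)/\int_t^b h_1(s)\DD ds$ appearing in the theorem, which is natural under the substitution $y\leftrightarrow t$ above, is where the bulk of the technical work is likely to lie.
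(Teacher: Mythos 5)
Your strategy is genuinely different from the paper's. The paper stays entirely on the operator side: it translates the summability condition into membership of the resolvent in an Orlicz ideal, proves an Independence Theorem (elimination of $h_3$) via Matsaev's theorem on real parts of Volterra operators, reduces the diagonal case to the integral operator with kernel $\mathds{1}_{t<s}\sqrt{h_2(t)}\sqrt{h_1(s)}$, characterises its ideal membership by a dyadic discretisation (the AJPR-type \thref{Q152}), and only at the very end converts a sequential condition into the continuous integral by Orlicz-space duality. You instead propose a complex-analytic route through the growth of an entire function $\Phi$ whose zeros are the eigenvalues. Your first step (the convergence-class equivalence between $\sum\ms g(|\lambda_n|)^{-1}<\infty$ and $\int\log|\Phi(iy)|\,\ms g(y)^{-1}y^{-1}\,dy<\infty$) is defensible for real-zero entire functions, although you would still have to handle the Lindel\"of corrections at integer orders and the case where $\Phi$ has order exceeding $\rho_{\ms g}$ (the convergence exponent may even be infinite).

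The essential gap is the claimed two-sided bound for $\log|\Phi(iy)|$. As stated --- with only $h_1,h_2$ appearing on the right-hand side --- it is false for general Hamiltonians: for $\mr H_{\alpha_1,\alpha_2}$ of \thref{Q132} with $\alpha_1>4$ the spectrum has convergence exponent $\tfrac12$, while the right-hand side, computed from the diagonal entries, grows like $y^{2/\alpha_1}$ with $2/\alpha_1<\tfrac12$; your equivalence would then yield convergence exponent $2/\alpha_1$ for the full Hamiltonian, a contradiction with \eqref{Q135}. Hence the elimination of $h_3$ cannot be achieved by any pointwise, absolute-constant device: a Pr\"ufer-type rotation does not preserve the diagonal entries, and $h_3^2\le h_1h_2$ gives only one-sided control. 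The hypothesis $\rho_{\ms g}>1$ must enter precisely at this step (in the paper it does so through Matsaev's theorem, which fails at and below trace class), not merely in the Jensen/Lindel\"of bookkeeping as your outline suggests. Separately, even for diagonal Hamiltonians the lower bound $\log|\Phi(iy)|\gtrsim\int_{\{\ldots\}}h_1\,dt/\int_t^bh_1$ does not come out of a Picard iteration, which naturally produces upper bounds only; such two-sided growth estimates are substantial theorems in their own right. Both difficulties are exactly where you say the bulk of the work lies, but no workable mechanism is offered for either, so the proposal does not yet constitute a proof.
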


\begin{remark}\thlab{Q105}
	For the same reasons as explained in \thref{Q103}, the assumption that $\int_a^b h_1(s)\DD ds<\infty$ is 
	just a normalisation and no loss in generality. 
	Also the assumption that $h_1$ cannot vanish a.e.\ on any interval $(c,b)$ is no loss of generality. 
	The reason being that, if $h_1$ does vanish on an interval of this form, then the Krein-de~Branges 
	formula, cf.\ \cite[p.369 (english translation)]{krein:1951}, \cite[Theorem~X]{debranges:1961}, says that 
	\begin{equation}\label{Q108}
		\lim_{n\to\infty}\frac{\lambda_n^+}n=\lim_{n\to\infty}\frac{\lambda_n^-}n=
		\pi\int_a^b\sqrt{\det{H(s)}}\DD ds
		,
	\end{equation}
	where $\lambda_n^\pm$ denote the sequences of positive and negative, respectively, eigenvalues arranged according to 
	increasing modulus. In particular, the series $\sum_{n=1}^\infty\frac 1{\ms g(|\lambda_n|)}$ converges whenever $\rho_\ms g>1$. 
\end{remark}

\noindent
Let us note that, besides the condition for square summability given in \cite{kaltenbaeck.woracek:hskansys}, 
\thref{Q104} also yields new proofs of the results on the genus of the spectrum of a string given in 
\cite[p.139f]{kac.krein:1958} and \cite[Theorema~1,2]{kac:1962}, and of \cite{kac:1986} for orders between $\frac 12$ and $1$. 

\subsubsection*{Limit superior properties}

In our third theorem, we characterise $\limsup$--properties of the sequence $(\lambda_n)_{n=1}^\infty$, again relative to 
growth functions $\ms g$ with $\rho_\ms g>1$. While the characterisations in \thref{Q102,Q104} are perfectly explicit in terms 
of $H$, the conditions occurring in this context are somewhat more complicated. The reason for this is 
intrinsic, and manifests itself in the necessity to pass to the nonincreasing rearrangement of a certain sequence. 

\begin{theorem}\thlab{Q106}
	Let $H=\smmatrix{h_1}{h_3}{h_3}{h_2}$ be a Hamiltonian on an interval $[a,b)$ such that $\int_a^b h_1(s)\DD ds<\infty$
	and that $\OpA{H}$ has discrete spectrum. 
	Moreover, assume that $h_1$ does not vanish a.e.\ on any interval $(c,b)$ with $c\in(a,b)$. 
	Let $\ms g$ be a growth function with order $\rho_\ms g>1$. 

	Choose a right inverse $\chi$ of the nonincreasing surjection
	\[
		\left\{
		\begin{array}{rcl}
			[a,b] & \to & [0,1]
			\\
			t & \mapsto & \Big(\int_a^bh_1(s)\DD ds\Big)^{-1}\Big(\int_t^bh_1(s)\DD ds\Big)
		\end{array}
		\right.
	\]
	and let $(\omega_n^*)_{n\in\bb N}$ be the nonincreasing rearrangement of the sequence 
	$(\omega_n)_{n\in\bb N}$ defined as 
	\[
		\omega_n\DE 2^{-\frac n2}\bigg(\int\limits_{\chi(2^{1-n})}^{\chi(2^{-n})}h_2(s)\DD ds\bigg)^{\frac 12}
		,\quad n\in\bb N
		.
	\]
	Then
	\begin{Enumerate}
	\item ${\displaystyle
		\limsup_{n\to\infty}\frac{n}{\ms g(|\lambda_n|)}<\infty
		\ \Leftrightarrow\ 
		\limsup_{n\to\infty}\frac n{\ms g((\omega_n^*)^{-1})}<\infty
		}$,
	\item ${\displaystyle
		\lim_{n\to\infty}\frac{n}{\ms g(|\lambda_n|)}=0
		\ \Leftrightarrow\ 
		\lim_{n\to\infty}\frac n{\ms g((\omega_n^*)^{-1})}=0
		}$.
	\end{Enumerate}
\end{theorem}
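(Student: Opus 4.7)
The plan is to reduce both assertions to a two-sided pointwise comparison between $|\lambda_n|^{-1}$ and $\omega_n^*$, up to absolute multiplicative constants. Once such a comparison is available, the two claimed equivalences follow automatically, since $\ms g$ is strictly increasing and regularly varying of index $\rho_\ms g$ (a consequence of the assumption $\lim_{r\to\infty} r\ms g'(r)/\ms g(r)=\rho_\ms g$). In particular, replacing the argument of $\ms g$ by a bounded multiple of itself distorts $\ms g$ only by a bounded factor at infinity, so the weak-type conditions $\limsup_n n/\ms g(|\lambda_n|)<\infty$ and $\lim_n n/\ms g(|\lambda_n|)=0$ are preserved under the two-sided comparison.

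The first step is a change of variable: via the right inverse $\chi$, I would pass from $H$ on $[a,b)$ to an equivalent Hamiltonian $\tilde H$ on $(0,1]$ whose first diagonal entry is a constant multiple of Lebesgue measure, so that the map $t\mapsto\int_t^1\tilde h_1(s)\DD ds$ becomes linear. In these coordinates the dyadic grid $\{\chi(2^{-n})\}$ becomes the geometric grid $\{2^{-n}\}$, and $2^n\omega_n^2$ is, up to a universal constant, the integral of $\tilde h_2$ over the dyadic shell $[2^{-n},2^{1-n}]$. This normalisation is of the same kind as indicated in \thref{Q103} and \thref{Q105}, and it preserves the spectrum of $\OpA{H}$ up to an explicit rescaling.

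The heart of the argument is to establish the singular value estimate
\[
	c_1\,\omega_n^* \,\leq\, \frac{1}{|\lambda_n|} \,\leq\, c_2\,\omega_n^*,\qquad n\in\bb N,
\]
with absolute constants $c_1,c_2>0$. The upper bound is the easier direction and follows from the min-max principle: for each $n$, one constructs an $n$-dimensional subspace of $L^2(H)$ supported on the union of the dyadic shells carrying the $n$ largest values of $\omega_k$, and on which the quadratic form $\langle\OpA{H}^{-1}\cdot,\cdot\rangle$ is controlled by $c_2\omega_n^*$. The lower bound, which I anticipate to be the main obstacle, is the dual statement: for every $n$-dimensional subspace one must exhibit a vector on which this quadratic form is at least $c_1\omega_n^*$. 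The construction should proceed by a Hardy-type dyadic decomposition of the integral operator $\OpA{H}^{-1}$, showing that, modulo absolute constants, $\OpA{H}^{-1}$ dominates an orthogonal sum of rank-one operators whose norms are exactly the $\omega_n$. Rearrangement together with Ky Fan's inequality then produces $\omega_n^*$ in the estimate.

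The contrast with \thref{Q104} is illuminating and explains why the rearrangement enters the present theorem explicitly. The summability criterion there is a strong-Lorentz condition and can be recovered by integrating singular-value estimates, so a certain slack in the pointwise comparison is allowed; the original order of $(\omega_n)$ is washed out in the integration. The limit-superior criterion of \thref{Q106}, by contrast, is a weak-Lorentz condition and cannot be recovered from integrated estimates: one is forced to work with the singular values one by one, and hence must compare $|\lambda_n|^{-1}$ to a \emph{monotone} sequence built from $(\omega_n)$, which is precisely $\omega_n^*$.
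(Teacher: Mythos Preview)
Your central claim---the pointwise two-sided estimate $c_1\,\omega_n^*\leq |\lambda_n|^{-1}\leq c_2\,\omega_n^*$---is stronger than what the theorem asserts and stronger than what the paper establishes. The paper does \emph{not} prove such a pointwise comparison; it works entirely at the level of operator ideals. Concretely, the paper identifies the two $\limsup$ conditions with membership of $\OpB{H}=\OpA{H}^{-1}$ in the Lorentz ideal $\LI{\pi}$ (respectively its separable part $\SLI{\pi}$) associated with $\pi_n=1/\ms g^{-1}(n)$, and then shows via the Independence Theorem (\thref{Q113}), \thref{Q119}, and the AJPR-type \thref{Q152} that $\OpB{H}\in\LI{\pi}$ if and only if $(\omega_n)_{n=1}^\infty\in\LI{\pi}$. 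The closing step, rewriting $\limsup_n \omega_n^*\ms g^{-1}(n)<\infty$ as $\limsup_n n/\ms g((\omega_n^*)^{-1})<\infty$ via regular variation of $\ms g$, is exactly as you describe.

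There are two concrete gaps in your sketch. First, you never address the off-diagonal entry $h_3$: the sequence $(\omega_n)$ involves only $h_1$ and $h_2$, yet the quadratic form of $\OpA{H}^{-1}$ depends on $h_3$, so the min--max arguments you outline cannot go through for general $H$ without further input. This is precisely the point at which the paper invokes the Independence Theorem, which in turn rests on the validity of Matsaev's Theorem in Lorentz ideals strictly containing some $\SI_p$ with $p>1$ (a classical result of Gohberg--Krein). Second, even for the diagonal Hamiltonian, the dyadic decomposition you allude to does not yield a pointwise bound $a_n(S_{21})\leq c_2\,\omega_n^*$. The diagonal blocks $P_nS_{21}P_n$ are Volterra operators of infinite rank on $L^2(J_n)$, not rank-one projections with norm $\omega_n$, so controlling the block-diagonal part requires the triangular truncation transformator; its boundedness on $\LI{\pi}$ is again Matsaev's Theorem, and the conclusion is an \emph{ideal-norm} inequality, not a termwise one. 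Likewise, the pinching $A\mapsto\sum_n P_nAP_{n+1}$ used for the lower direction is a contraction only in the sense of fully symmetric norms, not singular-value-by-singular-value.

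Your explanation of why the rearrangement $(\omega_n^*)$ is intrinsic to this statement---weak-Lorentz versus Orlicz conditions---is correct and matches the paper's viewpoint.
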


\noindent
Remember here \thref{Q105}.

\subsubsection*{Outline of the proofs}

The proof of \thref{Q102,Q104,Q106} proceeds through four stages. 
\begin{Steps}
\item The first stage is to pass from eigenvalue distribution to operator theoretic properties. 
	This is done in a standard way using symmetrically normed operator ideals: 
	discreteness of $\sigma(\OpA{H})$ is equivalent to $(\OpA{H}-z)^{-1}$ being compact, 
	summability properties of $\sigma(\OpA{H})$ are equivalent to $(\OpA{H}-z)^{-1}$ belonging to Orlicz ideals, 
	and $\limsup$--properties of $\sigma(\OpA{H})$ are equivalent to $(\OpA{H}-z)^{-1}$ belonging to Lorentz spaces.
\item The reader has certainly observed the -- probably surprising -- fact that the conditions in our theorems do not 
	involve the off-diagonal entry $h_3$ of the Hamiltonian $H$. The second stage is to prove an Independence Theorem which 
	says that membership of resolvents $(\OpA{H}-z)^{-1}$ in operator ideals $\OI$ is indeed independent of $h_3$, provided 
	$\OI$ possesses a certain additional property. This additional property is that a weak variant of Matsaev's Theorem 
	on real parts of Volterra operators holds in $\OI$. 
\item In a work of A.B.Aleksandrov, S.Janson, V.V.Peller, and R.Rochberg, membership in Schatten classes 
	of integral operators whose kernel has a particular form is characterised using a dyadic discretisation method. 
	The third stage is to realise that a minor generalisation of one of their results suffices to prove the mentioned 
	weak Matsaev Theorem in the ideal $\SI_\infty$ of all compact operators. 
	For Orlicz- and Lorentz ideals, it is known that (the full) Matsaev Theorem holds.
	Thus the Independence Theorem stated in \ding{193} will apply to all ideals occurring in \ding{192}.
\item The final stage is to characterise membership in the mentioned ideals for a diagonal Hamiltonian (meaning that $h_3=0$).
	This again rests on the discretisation method from \cite{aleksandrov.janson.peller.rochberg:2002}, which 
	yields characterisations of a sequential form (as the one stated in \thref{Q106}) for all ideals occurring in \ding{192}.
	For the cases of $\SI_\infty$ and Orlicz ideals, sequential characterisations can be rewritten to a continuous form 
	(as stated in \thref{Q102,Q104}). This is nearly obvious for $\SI_\infty$, 
	while for Orlicz ideals a little more effort and passing to dual spaces is needed. 
\end{Steps}

\subsubsection*{The threshold $\bm{\rho_\ms g=1}$}

The Krein-de~Branges formula \eqref{Q108} implies that for every Hamiltonian $H$ whose determinant 
does not vanish a.e., the spectrum $\sigma(\OpA{H})$, if discrete, satisfies 
$\liminf_{n\to\infty}\frac n{|\lambda_n|}>0$. On the other hand, $\sigma(\OpA{H})$ can be arbitrarily sparse if $\det H=0$ a.e. 
It is not difficult to find Hamiltonians $H=\smmatrix{h_1}{h_3}{h_3}{h_2}$ whose spectrum 
is discrete and satisfies $\lim_{n\to\infty}\frac n{|\lambda_n|^\rho}\in(0,\infty)$ for some $\rho<1$, 
but in the same time $h_1h_2$ does not vanish a.e., see \thref{Q132}. 
For each such Hamiltonian and every Schatten-von Neumann ideal $\SI_p$ with $\rho<p\leq 1$, 
the Independence Theorem mentioned in \ding{193} fails. 
This shows that our method necessarily must break down at (and below) trace class, i.e., growth of speed $\ms g(r)\DE r$. 

On a less concrete level, growth of order $1$ is a threshold because of (at least) four reasons. 
\begin{Itemize}
\item Orders larger than $1$, meaning eigenvalue distriubution more dense than integers, can occur only from 
	the behaviour of tails of $H$ at its singular endpoint $b$. 
	In fact, for $\rho>1$, the spectrum of $\OpA{H}$ is discrete with convergence exponent $\rho$ if and only if 
	for some $c\in(a,b)$ the spectrum of $\OpA{H|_{[c,b)}}$ is discrete with convergence exponent $\rho$. 
	
	Contrasting this, orders less than $1$ will in general accumulate over the whole interval $(a,b)$.
	In fact, it may happen that $\sigma(\OpA{H})$ has convergence exponent $1$ while for every $c\in(a,b)$ 
	the spectrum of the tail $H|_{[c,b)}$ has convergence exponent $0$. 
\item Entire functions of bounded type have very specific properties related to exponential type. 
	In complex analysis, orders larger than $1$ are usually considered as more stable than smaller orders. 
\item The theory of symmetrically normed operator ideals is significantly more complicated 
	for ideals close to trace class than for ideals containing some Schatten-von Neumann ideal $\SI_p$ with $p>1$. 
	When going even below trace class, a lot of the theory breaks down completely. 
\item Rewriting asymptotic conditions on the spectral distribution to conditions on membership in Orlicz- and Lorentz ideals 
	is not anymore possible when coming close to trace class. 
\end{Itemize}
Let us now give two examples which illustrate our results. They are simple, and given by Hamiltonians related to a string, 
but, as we hope, still illustrative. At this point we only state their spectral properties; the proof is given in Section~5.3. 

\begin{example}\thlab{Q109}
	Given $\alpha>1$ and $\alpha_1,\alpha_2\in\bb R$, we consider the Hamiltonian (to avoid bulky notation, we skip 
	indices $\alpha,\alpha_1,\alpha_2$ at $h_2$)
	\[
		H_{\alpha;\alpha_1,\alpha_2}(t)\DE
		\begin{pmatrix}
			1 & 0
			\\
			0 & h_2(t)
		\end{pmatrix}
		,\quad t\in[0,1)
		,
	\]
	where 
	\begin{equation}\label{Q133}
		h_2(t)\DE\Big(\frac 1{1-t}\Big)^\alpha\Big(1+\log\frac 1{1-t}\Big)^{-\alpha_1}
		\Big(1+\log^+\log\frac 1{1-t}\Big)^{-\alpha_2},\quad t\in[0,1)
		.
	\end{equation}
	Since $\alpha>1$, we have $\int_0^1 h_2(t)\DD dt=\infty$. 

	If $\alpha>2$, then $0$ belongs to the essential spectrum of $\OpA{H_{\alpha;\alpha_1,\alpha_2}}$, and if 
	$\alpha\in(1,2)$, then the spectrum is discrete with convergence exponent $1$ but 
	$\liminf_{n\to\infty}\frac n{|\lambda_n|}>0$, in particular, 
	$\sum_{n=1}^\infty\frac 1{|\lambda_n|}=\infty$. 

	A behaviour between those extreme situations occurs when $\alpha=2$. First, the spectrum of 
	$\OpA{H_{2;\alpha_1,\alpha_2}}$ is discrete, if and only if 
	\[
		(\alpha_1>0)\vee(\alpha_1=0,\alpha_2>0)
		.
	\]
	For such parameter values, the convergence exponent of the spectrum is
	\begin{equation}\label{Q134}
		\text{conv.exp.\ of }\sigma(\OpA{H_{2;\alpha_1,\alpha_2}})=
		\begin{cases}
			\infty &\hspace*{-3mm},\quad \alpha_1=0
			,
			\\
			\frac 2{\alpha_1} &\hspace*{-3mm},\quad \alpha_1\in(0,2)
			,
			\\
			1 &\hspace*{-3mm},\quad \alpha_1\geq 2
			.
		\end{cases}
	\end{equation}
	For $\alpha_1\in(0,2)$, we have a more refined $\limsup$-property relative to a comparison function which is not 
	a power:
	\[
		0<\limsup_{n\to\infty}\frac n{|\lambda_n|^{\frac 2{\alpha_1}}(\log|\lambda_n|)^{-\frac{\alpha_2}{\alpha_1}}}
		<\infty
		.
	\]
\end{example}

\begin{example}\thlab{Q132}
	Given $\alpha_1>0$ and $\alpha_2\in\bb R$ consider the Hamiltonian (again indices at $h_2$ are skipped)
	\[
		\mr H_{\alpha_1,\alpha_2}\DE
		\begin{pmatrix}
			1 & -\sqrt{h_2(t)}
			\\
			-\sqrt{h_2(t)} & h_2(t)
		\end{pmatrix}
		,\quad t\in[0,1)
		,
	\]
	where $h_2$ is as in \eqref{Q133} with $\alpha=2$. 
	Then $\sigma(\OpA{\mr H_{\alpha_1,\alpha_2}})$ is discrete, and its convergence exponent is
	\begin{equation}\label{Q135}
		\text{conv.exp.\ of }\sigma(\OpA{\mr H_{\alpha_1,\alpha_2}})=
		\begin{cases}
			\frac 2{\alpha_1} &\hspace*{-3mm},\quad \alpha_1\in(0,4)
			,
			\\
			\frac 12 &\hspace*{-3mm},\quad \alpha_1\geq 4
			.
		\end{cases}
	\end{equation}
	The diagonalisation of $\mr H_{\alpha_1,\alpha_2}$, i.e., the Hamiltonian obtained by skipping its off-diagonal entries, 
	is $H_{2;\alpha_1,\alpha_2}$. Comparing the convergence exponents computed in \eqref{Q134} and \eqref{Q135}, 
	illustrates validity of the Independence Theorem 
	from \ding{193} as long as the convergence exponent is not less than $1$, and its failure for other values. 
\end{example}

\subsubsection*{Organisation of the manuscript}

The structuring of this article is straightforward. 
We start off in Section~2 with proving the central Independence Theorem mentioned in \ding{193}. 
Section~3 contains the proof of \thref{Q102}, and Section~4 the proofs of \thref{Q104,Q106}. 
Finally, in Section~5, we give a fourth theorem in the spirit of the above three theorems, 
disuss the normalisation condition $\int_a^bh_1(s)\DD ds<\infty$, and provide details for the \thref{Q109,Q132}.

As mentioned in \ding{194} and \ding{195} above, our arguments use a (very) minor generalisation of the AJPR-results. 
This is established in just the same way as the results of 
\cite{aleksandrov.janson.peller.rochberg:2002} with only a few technical additions.
For the convenience of the reader we provide full details in Appendix~A.
Moreover, in Appendix~B, we provide detailed proof for some elementary facts being used in the text, and in 
Appendix~C we make the connection of our \thref{Q102} with \cite[Theorem~1]{kac:1995}.

\section{The Independence Theorem}

Let $\mc H$ be a Hilbert space and $\mc B(\mc H)$ the set of all bounded linear operators on $\mc H$. 
For an operator $T\in\mc B(\mc H)$ we denote by $a_n(T)$ the $n$-th approximation number of $T$, i.e.,  
\[
	a_n(T)\DE\inf\big\{\|T-A\|\DS A\in\mc B(\mc H),\dim\ran A<n\big\}
	,\quad n\in\bb N
	.
\]
The Calkin correspondence \cite{calkin:1941} is the map assigning to each $T\in\mc B(\mc H)$ 
the sequence $(a_n(T))_{n=1}^\infty$ of its approximation numbers. 

An operator ideal $\OI$ in $\mc H$ is a two-sided ideal of the algebra $\mc B(\mc H)$. 
Every proper operator ideal $\OI$ contains the ideal of all finite rank operators, and, provided $\mc H$ is separable, 
is contained in the ideal $\SI_\infty$ of all compact operators. 
Moreover, every operator ideal contains with an operator $T$ also its adjoint $T^*$.

Via the Calkin correspondence, operator ideals can be identified with certain sequence spaces. 
Recall \cite[Theorem~1]{garling:1967}: there is a bijection {\rm Seq} of the set of all operator ideals of $\mc H$ onto the 
set of all solid symmetric sequence spaces\footnote{%
	A linear subspace $\mc S$ of $\bb C^{\bb N}$ is called solid, if 
	\[
		(\alpha_n)_{n=1}^\infty\in\mc S\wedge|\beta_n|\leq|\alpha_n|,n\in\bb N
		\ \Rightarrow\ 
		(\beta_n)_{n=1}^\infty\in\mc S
		,
	\]
	and it is called symmetric, if 
	\[
		(\alpha_n)_{n=1}^\infty\in\mc S,\sigma\text{ permutation of }\bb N
		\ \Rightarrow\ 
		(\alpha_{\sigma(n)})_{n=1}^\infty\in\mc S
		.
	\]
}, 
such that for all $T\in\mc B(\mc H)$ 
\[
	T\in\OI\quad\Leftrightarrow\quad(a_n(T))_{n=1}^\infty\in\Seq{\OI}
	.
\]
For example, the ideal $\SI_\infty$ of all compact operators corresponds to $c_0$, the trivial ideal 
$\mc B(H)$ to $\ell^\infty$, and the Schatten--von Neumann classes $\SI_p$ to $\ell^p$. 
Taking the viewpoint of sequence spaces is natural in (at least) two respects. 
\begin{Itemize}
\item It allows to compare ideals in $\mc B(\mc H)$ for different base spaces $\mc H$.
	A solid symmetric sequence space $\mc S$ invokes the family of ``same-sized'' operator ideals
	\[
		\big\{T\in\mc B(\mc H)\DS (a_n(T))_{n=1}^\infty\in\mc S\big\},\quad\mc H\text{ Hilbert space}
		.
	\]
\item Virtually all examples of operator ideals $\OI$ which ``appear in nature'' are defined by a specifying 
	their sequence space $\Seq{\OI}$. 
\end{Itemize}
From now on we do not anymore distinguish between sequence spaces and operator ideals, and always speak of an operator ideal.

A central role is played by integral operators whose kernel has a very special form. 
Let $-\infty\leq a<b\leq\infty$, and $\kappa,\varphi\DF(a,b)\to\bb C$ be measurable functions 
such that $\kappa\in L^2(a,b)$ and $\mathds{1}_{(a,c)}\varphi\in L^2(a,b)$ for every $c\in(a,b)$. 
Then we consider the (closed, but possibly unbounded) integral operator $T$ in $L^2(a,b)$ with kernel
\begin{equation}\label{Q110}
	\parbox{\textwidth-15mm}{${\displaystyle\mkern50mu
	T:\quad \mathds{1}_{t<s}(t,s)\varphi(t)\qu{\kappa(s)}
	}$}
\end{equation}
Explicitly, this is the operator acting as 
\[
	(Tf)(t)\DE \varphi(t)\int_t^bf(s)\qu{\kappa(s)}\DD ds,\quad t\in(a,b)
	,
\]
on its natural maximal domain
\[
	\dom T\DE \Big\{f\in L^2(a,b)\DS 
	\Big(t\mapsto\varphi(t)\int_t^bf(s)\qu{\kappa(s)}\DD ds\Big)\in L^2(a,b)\Big\}
	.
\]
In the next definition we single out a crucial property which an operator ideal $\OI$ may or may not have.

\begin{definition}\thlab{Q111}
	Let $\OI$ be an operator ideal. We say that the weak Matsaev Theorem holds in $\OI$, if the following
	statement is true.
	\begin{Itemize}
	\item Let $-\infty\leq a<b\leq\infty$, let $\kappa,\varphi\DF(a,b)\to\bb C$ be measurable functions 
		such that $\kappa\in L^2(a,b)$ and $\mathds{1}_{(a,c)}\varphi\in L^2(a,b)$ for every $c\in(a,b)$, 
		and let $T$ be the integral operator with kernel \eqref{Q110}. 
		Then $\Re T\in\OI$ implies $T\in\OI$. 
	\end{Itemize}
\end{definition}

\noindent
We are going to compare a Hamiltonian $H$ with its diagonal part.

\begin{definition}\thlab{Q112}
	Let $H$ be a Hamiltonian and write $H=\smmatrix{h_1}{h_3}{h_3}{h_2}$. Then we denote the corresponding 
	diagonal Hamiltonian as 
	\[
		\Diag H\DE
		\begin{pmatrix}
			h_1 & 0
			\\
			0 & h_2
		\end{pmatrix}
		.
		\vspace*{-3mm}
	\]
\end{definition}

\begin{theorem}[Independence Theorem]\thlab{Q113}
	Let $H$ be a Hamiltonian defined on an interval $[a,b)$, and assume that $\int_a^b h_1(s)\DD ds<\infty$.
	Moreover, let $\OI$ be an operator ideal. 

	Then $\OpA{\Diag H}\in\OI$ implies that $\OpA{H}\in\OI$. If the weak Matsaev Theorem holds in $\OI$, 
	also the converse holds. 
\end{theorem}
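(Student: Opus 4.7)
\emph{Proof plan.} I will route both $\OpA H$ and $\OpA{\Diag H}$ into a common standard Hilbert space via the pointwise square root $H^{1/2}$, and realise their bounded inverses as real parts of block Volterra integral operators whose singular values depend only on $h_1,h_2$. Writing $p,q$ for the first and second columns of $H^{1/2}$, the map $U\colon L^2(H)\to L^2([a,b),\mathbb C^2)$, $Uf:=H^{1/2}f$, is an isometry onto $\{\phi\colon\phi(t)\in\ran H^{1/2}(t)\text{ a.e.}\}$. Integrating $\hat f'=JHg$ with $\hat f_1(a)=0$ and with the limit-point choice $\hat f_2(t)=-\int_t^b(h_1g_1+h_3g_2)(s)\,ds$ (legitimate precisely because $\int_a^bh_1<\infty$), and then conjugating through $U$, a direct calculation gives
\[
UA_H^{-1}U^{-1}\;=\;-(L_H+L_H^*)\;=\;-2\,\Re L_H
\quad\text{where}\quad
L_H\phi(t):=p(t)\int_a^t q(s)^T\phi(s)\,ds
\]
defines a block Volterra on $L^2([a,b),\mathbb C^2)$. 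Since $\phi(s)\in\ker H(s)$ forces $p(s)^T\phi(s)=q(s)^T\phi(s)=0$, both $L_H$ and $L_H^*$ vanish on $(\ran U)^\perp$, making the extension by zero compatible.

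The core observation is that the singular values of $L_H$ depend only on $h_1,h_2$. Factoring $L_H=M_p\circ J\circ M_{q^T}$, with $J$ the scalar Volterra on $L^2(a,b)$, and splitting off partial isometries $M_p=V_pM_{|p|}$, $M_{q^T}=M_{|q|}V_q^*$, I see that the nonzero singular values of $L_H$ coincide with those of the scalar Volterra $M_{|p|}JM_{|q|}$ on $L^2(a,b)$. The relation $(H^{1/2})^2=H$ yields $|p|^2=h_1$, $|q|^2=h_2$, so this scalar Volterra has kernel $\mathds{1}_{s<t}\sqrt{h_1(t)h_2(s)}$, independent of $h_3$; consequently $L_H\in\OI\Leftrightarrow L_{\Diag H}\in\OI$. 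For diagonal $H$ the block operator $L_{\Diag H}$ is strictly upper triangular, so $(L_{\Diag H}+L_{\Diag H}^*)^2=\Diag(MM^*,M^*M)$ with $M$ the scalar Volterra above; since operator ideals are closed under the doubling $T\mapsto T\oplus T$, this gives $\Re L_{\Diag H}\in\OI\Leftrightarrow L_{\Diag H}\in\OI$. Chaining,
\[
\OpA{\Diag H}\in\OI
\,\Leftrightarrow\,\Re L_{\Diag H}\in\OI
\,\Leftrightarrow\,L_{\Diag H}\in\OI
\,\Leftrightarrow\,L_H\in\OI
\,\Rightarrow\,\Re L_H\in\OI
\,\Leftrightarrow\,\OpA H\in\OI,
\]
settling the first implication of the theorem; the only one-way step is the trivial $L\in\OI\Rightarrow(L+L^*)/2\in\OI$.

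For the converse I assume weak Matsaev holds in $\OI$ and upgrade $\Re L_H\in\OI$ to $L_H\in\OI$ blockwise. Compression gives $\Re L_{ii}\in\OI$ where $L_{ii}$ is the scalar Volterra with kernel $\mathds{1}_{s<t}p_i(t)q_i(s)$; after the adjoint-reflection $t\leftrightarrow s$ this is the form \eqref{Q110}, so \thref{Q111} yields $L_{11},L_{22}\in\OI$. The key off-diagonal observation is that $L_{21}$ has kernel $\mathds{1}_{s<t}\gamma(t)\gamma(s)$ with $\gamma:=(H^{1/2})_{12}$; the symmetry of $\gamma(t)\gamma(s)$ in $t,s$ makes $L_{21}+L_{21}^*$ the rank-one operator $f\mapsto\gamma\langle\gamma,f\rangle$, so $\Re L_{21}$ lies in every nontrivial operator ideal and a second application of weak Matsaev gives $L_{21}\in\OI$. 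Compressing the $(1,2)$-block of $\Re L_H$ now supplies $L_{12}+L_{21}^*\in\OI$, whence $L_{12}\in\OI$; all four blocks in $\OI$ forces $L_H\in\OI$, and the chain of equivalences above closes the argument.

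The principal obstacle is conceptual: the Hilbert space $L^2(H)$ itself depends on $H$, so $\OpA H$ and $\OpA{\Diag H}$ cannot be compared directly. The square-root unitary is forced by the norm identity $\|H^{1/2}f\|_{L^2}=\|f\|_{L^2(H)}$, and it is the $J$-symmetry of canonical systems that makes the conjugated resolvent emerge as $-2\,\Re$ of a Volterra, matching the shape of weak Matsaev exactly; once this architecture is in place, the scalar $h_3$-independence of the singular values and the blockwise Matsaev application are essentially automatic.
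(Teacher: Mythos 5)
Your proposal is correct and follows essentially the same route as the paper: conjugate by $f\mapsto H^{1/2}f$ into $L^2(I\,dt)$, realise the inverse as $-2\Re$ of a block Volterra operator built from the entries of $H^{1/2}$, exploit that the block with kernel $\mathds{1}_{s<t}v_3(t)v_3(s)$ has rank-one real part, and apply the weak Matsaev property blockwise for the converse. The only variation is that you obtain the $h_3$-independence $L_H\in\OI\Leftrightarrow L_{\Diag H}\in\OI$ in one stroke from the partial-isometry factorisation $L_H=V_pKV_q^*$, $K=V_p^*L_HV_q$ with $K=M_{\sqrt{h_1}}JM_{\sqrt{h_2}}$, where the paper instead writes each block as $M_{\cdot}S_{21}M_{\cdot}$ with bounded multipliers and recombines them; both arguments are valid and yours is marginally more economical.
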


\noindent
Before giving the proof, we have to recall some known facts about the model operator $\OpA{H}$. 
The first lemma is folklore; one possible reference is \cite{kaltenbaeck.woracek:hskansys} where it appears implicitly.

\begin{lemma}\thlab{Q114}
	Under the assumption that $\int_a^b h_1(s)\DD ds<\infty$, the operator $\OpA{H}$ is injective and 
	its inverse $\OpB{H}\DE\OpA{H}^{-1}$ acts as
	\begin{equation}\label{Q125}
		(\OpB{H}f)(t)=-\lim_{c\nearrow b}\int_a^c
		\begin{pmatrix}
			0 & \mathds{1}_{s<t}(t,s)
			\\
			\mathds{1}_{s>t}(t,s) & 0
		\end{pmatrix}
		H(s)f(s)\DD ds
		,
	\end{equation}
	on the domain 
	\begin{equation}\label{Q126}
		\dom\OpB{H}=\bigg\{f\in L^2(H)\DS
		\parbox{50mm}{\small $\lim_{c\nearrow b}(0,1)\int_a^c JH(s)f(s)\DD ds$ exists,\\[1mm]
		r.h.s.\ of {\rm(\ref{Q125})} belongs to $L^2(H)$}
		\bigg\}
		.
	\end{equation}
	\popQED\qed
\end{lemma}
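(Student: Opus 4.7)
The plan is to unravel the definition of $\OpA{H}$, to show by direct integration that the formula proposed for $\OpB{H}$ is well-defined under the normalisation hypothesis, and to check it is a genuine inverse. Recall that $\OpA{H}f=g$ means there exists an absolutely continuous representative $y$ of $f$ (i.e.\ $H(y-f)=0$ a.e.) with $y'=JHg$ a.e.\ on $[a,b)$ and the boundary condition $(1,0)y(a)=0$; since we are in limit point case at $b$, there is no boundary condition imposed there.

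For injectivity, assume $\OpA{H}f=0$. Then the AC representative $y$ satisfies $y'\equiv 0$, so $y$ is constant, and the condition $y_1(a)=0$ forces $y=\binom{0}{c}$ for some constant $c$. Now $\|f\|_{L^2(H)}^2=\int_a^b y^*Hy\,ds=c^2\int_a^b h_2(s)\,ds$, and under our standing assumption $\int_a^b\tr H=\infty$ combined with $\int_a^b h_1<\infty$, one has $\int_a^b h_2=\infty$. Hence $c=0$ and $f=0$ in $L^2(H)$.

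To verify the inverse formula, I would fix $g\in L^2(H)$ and let $y(t)$ denote the right hand side of \eqref{Q125}. The first component $y_1(t)=-\int_a^t(Hg)_2(s)\,ds$ is well defined on $[a,b)$ since $(Hg)_2\in L^1_{\text{loc}}$. For the second component the key step is the estimate
\[
	\Big|\int_t^c (Hg)_1(s)\,ds\Big|
	=\Big|\int_t^c(1,0)H(s)g(s)\,ds\Big|
	\leq\Big(\int_t^c h_1(s)\,ds\Big)^{\!1/2}\cdot\|g\|_{L^2(H;(t,c))},
\]
obtained by Cauchy--Schwarz applied to the semi-definite inner product induced by $H$. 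The hypothesis $\int_a^b h_1<\infty$ together with $g\in L^2(H)$ makes this bound uniform in $c$ and shows that the limit $c\nearrow b$ exists absolutely, defining $y_2(t)$. Differentiating yields $y'(t)=JH(t)g(t)$ a.e., and $y_1(a)=0$ by inspection.

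The main obstacle is the last step, namely showing $y$ represents an element of $L^2(H)$ and hence that the domain is exactly \eqref{Q126}. One writes $\|y\|_{L^2(H)}^2=\int_a^b(h_1y_1^2+2h_3 y_1y_2+h_2y_2^2)\,ds$ and estimates each term via a Hardy-type argument combined with the Cauchy--Schwarz bound above: the $h_1$-term is controlled by $\|g\|^2_{L^2(H)}\cdot\int_a^b h_1$, the $h_2$-term by a symmetric estimate with the roles of the two endpoints swapped, and the cross term by the geometric--arithmetic inequality. The construction precisely matches the condition that $\lim_{c\nearrow b}(0,1)\int_a^c JH(s)g(s)\,ds$ exist, which corresponds to fixing the correct constant in the second component so that $y_2$ tends to $0$ at $b$; this is exactly the role of the limit point hypothesis, automatically selecting the ``right'' primitive. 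With $\OpA{H}$ injective and the candidate operator defined by \eqref{Q125} satisfying $\OpA{H}y=g$ for $g$ in the stated domain, the lemma follows.
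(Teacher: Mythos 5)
The constructive half of your argument is sound as far as it goes: injectivity, the absolute convergence of the tail integral via the Cauchy--Schwarz bound $|e_1^*Hg|\le\sqrt{h_1}\sqrt{g^*Hg}$ (which in fact shows the first condition in \eqref{Q126} is automatic once $\int_a^bh_1<\infty$), and the verification $y'=JHf$, $y_1(a)=0$. But your ``main obstacle'' paragraph is both wrong and unnecessary. Wrong: the bound one actually gets on the first component is $|y_1(t)|\le\big(\int_a^th_2\big)^{1/2}\|f\|_{L^2(H)}$, so the $h_1$-term of $\|y\|^2_{L^2(H)}$ is controlled only by $\|f\|^2\int_a^bh_1(t)\int_a^th_2(s)\,ds\,dt$, not by $\|f\|^2\int_a^bh_1$, and this double integral is infinite in general. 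Indeed, if your Hardy-type estimates held for every $f\in L^2(H)$, the closed operator $\OpB{H}$ would be everywhere defined, hence bounded, i.e.\ $0\in\rho(\OpA{H})$ for every normalised Hamiltonian --- contradicting \thref{Q172} and \thref{Q109} with $\alpha>2$. Unnecessary: membership of the right-hand side of \eqref{Q125} in $L^2(H)$ is part of the \emph{definition} of the domain \eqref{Q126}; it is not something to be proved.

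The genuine gap is the converse inclusion. What you establish is only $\OpB{H}\subseteq\OpA{H}^{-1}$ as graphs: for $f$ in the set \eqref{Q126} the primitive $y$ lies in $\dom\OpA{H}$ (here the limit point hypothesis enters, since no boundary condition is imposed at $b$) and $\OpA{H}y=f$. To get equality you must also show that $\ran\OpA{H}$ is contained in the set \eqref{Q126} and that for $f=\OpA{H}g$ the function \eqref{Q125} coincides with the absolutely continuous representative $w$ of $g$. Both $w$ and $y$ are primitives of $JHf$ with vanishing first component at $a$, so $w-y=(0,d)^T$ for a constant $d$, and one needs $d=0$. By construction $y_2(t)\to0$ as $t\nearrow b$; that $w_2(t)\to0$ as well follows by applying Green's identity to $w$ and the constant function $(1,0)^T$ --- which belongs to $\dom T_{\max}(H)$ precisely because $\int_a^bh_1<\infty$ --- together with the fact that in the limit point case the boundary form at $b$ vanishes identically on $\dom T_{\max}(H)$. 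Your closing remark that the limit point hypothesis ``automatically selects the right primitive'' points at exactly this step, but the argument is not carried out, and without it you have only shown that $\OpA{H}^{-1}$ \emph{extends} $\OpB{H}$.
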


\noindent
Denote by $L^2(Idt)$ the $L^2$-space of $2$-vector valued functions on $(a,b)$ 
with respect to the matrix measure $\smmatrix 1001\DD dt$. 
The function 
\[
	\Phi\DF f(t)\mapsto H(t)^{\frac 12}f(t)
\] 
maps the model space $L^2(H)$ isometrically onto some closed subspace of $L^2(Idt)$. 
This holds since, by its definition, $L^2(H)$ is a closed subspace of the $L^2$-space of $2$-vector valued 
functions on $(a,b)$ with respect to the matrix measure $H(t)dt$.

Let $\OpC{H}$ be the (closed, but possibly unbounded) integral operator on $L^2(Idt)$ with kernel 
\begin{equation}\label{Q115}
	\parbox{\textwidth-15mm}{${\displaystyle\mkern50mu
	\OpC{H}:\quad
	-H(t)^{\frac 12}
	\begin{pmatrix}
		0 & \mathds{1}_{s<t}(t,s)
		\\
		\mathds{1}_{s>t}(t,s) & 0
	\end{pmatrix}
	H(s)^{\frac 12}
	}$}
\end{equation}
and the natural maximal domain. 

The next lemma says that the operator $\OpB{H}$ can be transformed into $\OpC{H}$,
and was shown in \cite[Proof of Lemma~2.2]{kaltenbaeck.woracek:hskansys}.

\begin{lemma}\thlab{Q116}
	Assume that $\int_a^b h_1(s)\DD ds<\infty$, and denote by $P$ the orthogonal projection of $L^2(Idt)$ 
	onto $\ran\Phi$. Then 
	\[
		\OpB{H}=\Phi^{-1}P\OpC{H}\Phi\quad\text{and}\quad \OpC{H}=\Phi\OpB{H}\Phi^{-1}P
		.
	\]
	\popQED\qed
\end{lemma}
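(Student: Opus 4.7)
The plan is to derive both identities from a single pointwise algebraic computation combined with an explicit description of the projection $P$. First I would identify $\ran\Phi$ explicitly: since $(\Phi f)(s)=H(s)^{\frac 12}f(s)$ takes values in $\ran H(s)^{\frac 12}$ at each $s$, one shows that $\ran\Phi=\{g\in L^2(Idt)\DS g(s)\in\ran H(s)^{\frac 12}\text{ for a.e.\ }s\in(a,b)\}$, and consequently that $P$ acts pointwise as the orthogonal projection in $\bb C^2$ onto $\ran H(s)^{\frac 12}$. The key consequence is the identity $H(s)^{\frac 12}(Pg)(s)=H(s)^{\frac 12}g(s)$ for a.e.\ $s$, valid because $H(s)^{\frac 12}$ annihilates $(\ran H(s)^{\frac 12})^\perp=\ker H(s)^{\frac 12}$. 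In particular, $\OpC{H}g=\OpC{H}Pg$ for every $g\in\dom\OpC{H}$.

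Next I would verify the intertwining on vectors. For $f\in\dom\OpB{H}$, writing $H(s)f(s)=H(s)^{\frac 12}\cdot H(s)^{\frac 12}f(s)=H(s)^{\frac 12}(\Phi f)(s)$ under the integral in \eqref{Q125} and multiplying by $H(t)^{\frac 12}$ from the left gives
\begin{equation*}
	(\Phi\OpB{H}f)(t)=H(t)^{\frac 12}(\OpB{H}f)(t)=(\OpC{H}\Phi f)(t),
\end{equation*}
i.e.\ $\Phi\OpB{H}=\OpC{H}\Phi$ on $\dom\OpB{H}$. Since the left-hand side lies in $\ran\Phi$, $P$ acts as the identity on it, and composing with $\Phi^{-1}P$ on the left yields the first identity $\OpB{H}=\Phi^{-1}P\OpC{H}\Phi$. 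For the second identity, any $g\in\dom\OpC{H}$ satisfies $\OpC{H}g=\OpC{H}Pg$ by the preliminary step; setting $f\DE\Phi^{-1}Pg\in L^2(H)$, the established intertwining produces $\OpC{H}Pg=\OpC{H}\Phi f=\Phi\OpB{H}f=\Phi\OpB{H}\Phi^{-1}Pg$, which is exactly $\OpC{H}=\Phi\OpB{H}\Phi^{-1}P$.

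The main obstacle I expect is matching the domains precisely. Both \eqref{Q125} and \eqref{Q115} are defined via a limit prescription on a maximal domain, so one must verify that $f\in\dom\OpB{H}$ is equivalent to $\Phi f\in\dom\OpC{H}$, and that $\Phi^{-1}P$ carries $\dom\OpC{H}$ into $\dom\OpB{H}$. This is bookkeeping: the truncated integrals in \eqref{Q125}, premultiplied by $H(t)^{\frac 12}$, coincide with the truncated integrals defining $\OpC{H}\Phi f$, so convergence in $L^2(H)$ is equivalent via the isometry $\Phi$ to convergence in $\ran\Phi\subset L^2(Idt)$; the auxiliary requirement in \eqref{Q126} that $\lim_{c\nearrow b}(0,1)\int_a^cJH(s)f(s)\DD ds$ exists translates correspondingly under the same identification, using that the $(0,1)$-row picks out information only about $H^{\frac 12}f$. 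Spelling out this equivalence once carefully closes the proof.
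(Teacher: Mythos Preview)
The paper does not give its own proof of this lemma: it is stated with a \texttt{\textbackslash qed} and attributed to \cite[Proof of Lemma~2.2]{kaltenbaeck.woracek:hskansys}. Your approach---identifying $\ran\Phi$ as the subspace of functions taking values in $\ran H(s)^{1/2}$ a.e., observing that $P$ therefore acts fibrewise, deducing $\OpC{H}g=\OpC{H}Pg$, and then verifying the pointwise intertwining $\Phi\OpB{H}=\OpC{H}\Phi$---is exactly the natural argument and is presumably what the cited reference contains.

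The one place where your sketch is a bit loose is the domain matching, which you flag yourself. The convergence requirement in \eqref{Q126} reads $(0,1)J=(1,0)$, so it asks that $\lim_{c\nearrow b}\int_a^c(1,0)H(s)f(s)\DD ds$ exist; on the $\OpC{H}$ side, the kernel \eqref{Q115} has a $\mathds{1}_{s>t}$ entry that forces convergence of $\int_t^b(\text{first row of }H(s)^{1/2})g(s)\DD ds$. These match under $g=\Phi f$ since $(1,0)H(s)f(s)=(\text{first row of }H(s)^{1/2})(\Phi f)(s)$. For the second identity you also need that if $g\in\dom\OpC{H}$ then $Pg\in\dom\OpC{H}$; this follows from $\OpC{H}g=\OpC{H}Pg$ pointwise together with the fact that the maximal domain is defined by the resulting function lying in $L^2(Idt)$. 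With those two sentences added your argument is complete.
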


\noindent
As a consequence of \thref{Q116}, the operators $\OpB{H}$ and $\OpC{H}$ are together bounded or unbounded, 
and if they are bounded their approximation numbers coincide. Thus, for every operator ideal $\OI$, we have 
\[
	\OpB{H}\in\OI\ \Leftrightarrow\ \OpC{H}\in\OI
	.
\]
The following simple computation is a key step to the proof of \thref{Q113}.

\begin{lemma}\thlab{Q117}
	Let $H$ be a Hamiltonian on $[a,b)$ with $\int_a^b h_1(s)\DD ds<\infty$. 
	Denote 
	\[
		H(t)^{\frac 12}=
		\begin{pmatrix}
			v_1(t) & v_3(t)
			\\
			v_3(t) & v_2(t)
		\end{pmatrix}
		,\quad t\in [a,b)
		,
	\]
	and let $T_{ij}$, $(i,j)\in\{2,3\}\times\{1,3\}$, be the integral operators in $L^2(a,b)$ with kernel 
	\[
		\parbox{\textwidth}{${\displaystyle\mkern50mu
		T_{ij}:\quad \mathds{1}_{t<s}(t,s)v_i(t)v_j(s)
		}$}
	\]
	Then 
	\[
		\OpC{H}f=-
		\begin{pmatrix}
			T_{31}+T_{31}^* & T_{21}^*+T_{33}
			\\[2mm]
			T_{21}+T_{33}^* & T_{23}+T_{23}^*
		\end{pmatrix}
		f,\quad f\in\dom\OpC{H}
		.
	\]
\end{lemma}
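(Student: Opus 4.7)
The proof reduces to a direct block-by-block kernel computation, so my plan is simply to carry it out cleanly and then deal with the domain bookkeeping.

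First I would substitute $H(t)^{1/2}=\smmatrix{v_1(t)}{v_3(t)}{v_3(t)}{v_2(t)}$ on both sides of the factorisation \eqref{Q115} and evaluate the two matrix products. The middle factor $\smmatrix{0}{\mathds{1}_{s<t}}{\mathds{1}_{s>t}}{0}$ has the effect that each of the four resulting entries of the kernel matrix is a sum of exactly two terms, each of the form $v_p(t)v_q(s)\mathds{1}_{s>t}$ or $v_p(t)v_q(s)\mathds{1}_{s<t}$. This part is routine arithmetic and I would not write it out in detail.

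Second, I would identify these eight terms with the kernels of the eight operators $T_{ij}$ and $T_{ij}^*$ with $(i,j)\in\{2,3\}\times\{1,3\}$. The observation to invoke is that the formal adjoint of the integral operator with kernel $\mathds{1}_{t<s}(t,s)v_i(t)v_j(s)$ is the integral operator with kernel $\mathds{1}_{s<t}(t,s)v_j(t)v_i(s)$. With this in hand, a straightforward matching shows that the $(1,1)$-, $(1,2)$-, $(2,1)$-, $(2,2)$-entries of the computed kernel are, up to the overall minus sign, exactly $T_{31}+T_{31}^*$, $T_{21}^*+T_{33}$, $T_{21}+T_{33}^*$ and $T_{23}+T_{23}^*$, which is the claimed formula.

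The one point which needs a small comment is that $\OpC{H}$ and the $T_{ij}$ are defined as closed (possibly unbounded) integral operators on their natural maximal domains, so the manipulation above has to be interpreted on a common core. The natural choice is the subspace of functions with compact support in $[a,b)$: on such functions all four integrals defining $\OpC{H}f$ and $T_{ij}f$ are absolutely convergent, the matrix manipulation is pointwise justified by Fubini, and the block-matrix identity holds in $L^2$. Since this subspace is dense and all operators involved agree with the corresponding closed integral operators on it, the identity extends to $\dom\OpC{H}$. I expect this mild domain check to be the only non-mechanical part of the proof; everything else is just linear algebra on the kernels.
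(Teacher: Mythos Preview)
Your proposal is correct and follows essentially the same approach as the paper: multiply out the $2\times 2$ kernel in \eqref{Q115} and match the eight resulting terms against the kernels of $T_{ij}$ and $T_{ij}^*$. The paper's proof is in fact even more terse and omits your domain discussion entirely---since the identity is a pointwise equality of kernel functions, the asserted formula holds automatically for every $f\in\dom\OpC{H}$ without any need to pass through a core of compactly supported functions.
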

\begin{proof}
	Multiplying out the kernel \eqref{Q115} of the integral operator $\OpC{H}$ gives
	\[
		\begin{pmatrix}
			\rule{7pt}{0pt}\parbox{50mm}{%
			$\mathds{1}_{t<s}(t,s)v_3(t)v_1(s)$\\ \rule{15mm}{0pt}$+\mathds{1}_{t>s}(t,s)v_1(t)v_3(s)
			$} &
			\parbox{49mm}{%
			$\mathds{1}_{t<s}(t,s)v_3(t)v_3(s)$\\ \rule{15mm}{0pt}$+\mathds{1}_{t>s}(t,s)v_1(t)v_2(s)
			$} 
			\\[6mm]
			\rule{7pt}{0pt}\parbox{50mm}{%
			$\mathds{1}_{t<s}(t,s)v_2(t)v_1(s)$\\ \rule{15mm}{0pt}$+\mathds{1}_{t>s}(t,s)v_3(t)v_3(s)
			$} &
			\parbox{49mm}{%
			$\mathds{1}_{t<s}(t,s)v_2(t)v_3(s)$\\ \rule{15mm}{0pt}$+\mathds{1}_{t>s}(t,s)v_3(t)v_2(s)
			$} 
		\end{pmatrix}
	\]
	The adjoint $T_{ij}^*$ is the integral operator with kernel 
	\[
		\parbox{\textwidth}{${\displaystyle\mkern50mu
		T_{ij}^*:\quad \mathds{1}_{t>s}(t,s)v_j(t)v_i(s)
		}$}
	\]
	and the assertion follows.
\end{proof}

\begin{corollary}\thlab{Q119}
	Let $H=\smmatrix{h_1}00{h_2}$ be a diagonal Hamiltonian, and let $S_{21}$ 
	be the integral operator in $L^2(a,b)$ with kernel 
	\[
		\parbox{\textwidth}{${\displaystyle\mkern50mu
		S_{21}:\quad \mathds{1}_{t<s}(t,s)\sqrt{h_2(t)}\sqrt{h_1(s)}
		}$}
	\]
	Then for every operator ideal $\OI$ we have 
	\[
		\OpB{H}\in\OI\ \Leftrightarrow\ S_{21}\in\OI
		.
	\]
\end{corollary}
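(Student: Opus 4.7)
The plan is to chain together the two preceding lemmas and then read off a trivial block-matrix observation. By \thref{Q116}, for any operator ideal $\OI$ we have $\OpB{H}\in\OI$ if and only if $\OpC{H}\in\OI$, so the corollary will follow once we show $\OpC{H}\in\OI\Leftrightarrow S_{21}\in\OI$ in the diagonal case.

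First I would specialise \thref{Q117} to $H=\smmatrix{h_1}00{h_2}$. Here $v_1=\sqrt{h_1}$, $v_2=\sqrt{h_2}$, and $v_3\equiv 0$, so the integral operators $T_{31}$, $T_{33}$, $T_{23}$ all vanish identically, while $T_{21}$ has kernel $\mathds{1}_{t<s}(t,s)\sqrt{h_2(t)}\sqrt{h_1(s)}$, i.e.\ $T_{21}=S_{21}$. Plugging into the matrix formula of \thref{Q117} collapses $\OpC{H}$ to the block-antidiagonal form
\[
	\OpC{H}=-
	\begin{pmatrix}
		0 & S_{21}^*
		\\[1mm]
		S_{21} & 0
	\end{pmatrix}
	\quad\text{on}\quad L^2(Idt)=L^2(a,b)\oplus L^2(a,b)
	.
\]

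Now I would argue the ideal equivalence by exploiting the canonical embeddings $\iota_1,\iota_2\DF L^2(a,b)\to L^2(a,b)\oplus L^2(a,b)$ and coordinate projections $\pi_1,\pi_2$, all of which are bounded. One checks directly that
\[
	\OpC{H}=-\iota_2 S_{21}\pi_1-\iota_1 S_{21}^*\pi_2
	\quad\text{and}\quad
	S_{21}=-\pi_2\OpC{H}\iota_1
	.
\]
Since $\OI$ is a two-sided ideal in $\mc B(\cdot)$ and contains adjoints of its elements, $S_{21}\in\OI$ forces both $\iota_2 S_{21}\pi_1$ and $\iota_1 S_{21}^*\pi_2$ into $\OI$, hence $\OpC{H}\in\OI$; conversely, $\OpC{H}\in\OI$ gives $S_{21}\in\OI$ at once from the second identity.

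There is really no hard step here — everything is a bookkeeping reduction once \thref{Q116,Q117} are in hand. The only thing to be mildly attentive to is the boundedness/domain issue: membership in $\OI$ presupposes an operator in $\mc B(\mc H)$, so one should verify that boundedness of $\OpC{H}$ and of $S_{21}$ are equivalent. This is immediate from the block form, since the norm of an antidiagonal block operator equals the norm of either off-diagonal entry; the natural maximal domain of $\OpC{H}$ splits accordingly into the maximal domains of $S_{21}$ and $S_{21}^*$, so the passage between $L^2(Idt)$-operators and $L^2(a,b)$-operators is unproblematic.
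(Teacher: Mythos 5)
Your proposal is correct and follows exactly the paper's route: specialise \thref{Q117} to $v_3=0$ so that $\OpC{H}$ collapses to the antidiagonal block $-\smmatrix 0{S_{21}^*}{S_{21}}0$, then invoke \thref{Q116} and the ideal properties to transfer membership between $\OpB{H}$, $\OpC{H}$, and $S_{21}$. The paper's proof consists solely of displaying that block identity, leaving the embedding/projection bookkeeping implicit, so your write-up is just a more explicit version of the same argument.
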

\begin{proof}
	\thref{Q117} gives
	\begin{equation}\label{Q118}
		\OpC{H}=-
		\begin{pmatrix}
			0 & S_{21}^*
			\\
			S_{21} & 0
		\end{pmatrix}
		.
	\end{equation}
\end{proof}

\noindent
For a bounded function $\psi$, we denote by $M_\psi$ the multiplication operator with $\psi$ on $L^2(a,b)$:
\[
	(M_\psi f)(t)\DE\psi(t)f(t),\quad \|M_\psi\|=\|\psi\|_\infty
	.
\]

\begin{proof}[Proof of \thref{Q113}]
	It holds that $h_1=v_1^2+v_3^2$ and $h_2=v_2^2+v_3^2$, and hence 
	\[
		0\leq v_1\leq\sqrt{h_1},\ 0\leq v_2\leq\sqrt{h_2},\quad |v_3|\leq\min\{\sqrt{h_1},\sqrt{h_2}\}
		.
	\]
	Thus the functions (quotients are understood as $0$ if their denominator vanishes) 
	\[
		\frac{v_1}{\sqrt{h_1}},\ \frac{v_2}{\sqrt{h_2}},\ \frac{v_3}{\sqrt{h_1}},\ \frac{v_3}{\sqrt{h_2}},\ 
		\psi_1\DE\frac{v_3}{\sqrt{h_1}+v_1},\ \psi_2\DE\frac{v_3}{\sqrt{h_2}+v_2}
		,
	\]
	are all bounded. We have 
	\begin{align*}
		& T_{31}=M_{v_3/\sqrt{h_2}}S_{21}M_{v_1/\sqrt{h_1}},\quad
		T_{23}=M_{v_2/\sqrt{h_2}}S_{21}M_{v_3/\sqrt{h_1}}
		,
		\\
		& T_{21}=M_{v_2/\sqrt{h_2}}S_{21}M_{v_1/\sqrt{h_1}},\quad
		T_{33}=M_{v_3/\sqrt{h_2}}S_{21}M_{v_3/\sqrt{h_1}}
		,
		\\[1mm]
		& S_{21}=M_{\psi_2}T_{33}M_{\psi_1}+M_{\psi_2}T_{31}+T_{23}M_{\psi_1}+T_{21}
		,
	\end{align*}
	where the last relation holds since, by a short computation, 
	\[
		\sqrt{h_j}=\Big(\frac{v_3}{\sqrt{h_j}+v_j}\Big)\cdot\,v_3+v_j,\quad j=1,2
		.
	\]
	We see that 
	\[
		S_{21}\in\OI\quad\Leftrightarrow\quad T_{21},T_{23},T_{31},T_{33}\in\OI
		.
	\]
	From this it readily follows that $\OpB{\Diag H}\in\OI$ implies $\OpB{H}\in\OI$. 

	Assume that the weak Matsaev Theorem holds for $\OI$. If $\OpB{H}\in\OI$, then 
	\[
		\Re T_{31},\Re T_{23},\Re T_{21}+\Re T_{33}\in\OI
		.
	\]
	The operator $\Re T_{33}$ is one-dimensional, hence certainly belongs to $\OI$, and it follows that also $\Re T_{21}\in\OI$.
	We conclude that $T_{23}$, $T_{31}$, $T_{21}$, and $T_{33}$, all belong to $\OI$. 
	From this $S_{21}\in\OI$, and in turn $\OpB{\Diag H}\in\OI$. 
\end{proof}

\section{Discreteness of the spectrum}

\thref{Q102} is shown using a minor extension of \cite[Theorem~3.2]{aleksandrov.janson.peller.rochberg:2002},
namely \thref{Q120} below. 
In order to formulate this result, we need some notation. 
Let $-\infty\leq a<b\leq\infty$, let $\kappa,\varphi\DF(a,b)\to\bb C$ be measurable functions 
such that $\kappa\in L^2(a,b)$ and $\mathds{1}_{(a,c)}\varphi\in L^2(a,b)$ for every $c\in(a,b)$.
Then the function $t\mapsto\|\mathds{1}_{(t,b)}\kappa\|^2$ is a nonincreasing surjection of $[a,b]$ onto $[0,\|\kappa\|^2]$. 
Hence, we can choose an increasing sequence $c_0\DE a<c_1<c_2<\ldots<b$ such that
$\|\mathds{1}_{(c_n,b)}\kappa\|^2=2^{-n}\|\kappa\|^2$, $n\in\bb N$. Note that this requirement is equivalent to 
\begin{equation}\label{Q124}
	\|\mathds{1}_{(c_{n-1},c_n)}\kappa\|^2=\Big(\frac 12\Big)^n\|\kappa\|^2,\quad n\in\bb N
	.
\end{equation}
Having chosen $c_n$, we denote
\begin{equation}\label{Q121}
	J_n\DE(c_{n-1},c_n),\qquad\omega_n\DE \|\mathds{1}_{J_n}\kappa\|\cdot\|\mathds{1}_{J_n}\varphi\|
	,\quad n\in\bb N
	.
\end{equation}
Explicitly, by \eqref{Q124},
\[
	\omega_n=\|\kappa\|\cdot 2^{-\frac n2}\Big(\int_{c_{n-1}}^{c_n}|\varphi(s)|^2\DD ds\Big)^{\frac 12},\quad n\in\bb N
	.
\]

\begin{theorem}\thlab{Q120}
	Let $-\infty\leq a<b\leq\infty$, let $\kappa,\varphi\DF(a,b)\to\bb C$ be measurable functions 
	with $\kappa\in L^2(a,b)$ and $\mathds{1}_{(a,c)}\varphi\in L^2(a,b)$, $c\in(a,b)$, and consider the 
	integral operator $T$ on $L^2(a,b)$ with kernel \eqref{Q110}. 
	Then 
	\begin{center}
	\begin{tikzcd}[column sep=nixx]
		\parbox{21mm}{$T$ is compact \raisebox{-8pt}{\rule{0pt}{20pt}}} \arrow[Rightarrow]{rr}
		& \rule{0pt}{0pt} & \parbox{26mm}{ \raisebox{-8pt}{\rule{0pt}{20pt}} $\Re T$ is compact} \arrow[Rightarrow]{ld}\\
		& \parbox{24mm}{\rule{0pt}{12pt}$\lim_{n\to\infty}\omega_n=0$} \arrow[Rightarrow]{lu} &
	\end{tikzcd}
	\\[-3mm] \hfill
	\end{center}
	where $\omega_n$ are as in \eqref{Q121}.
	\popQED\qed
\end{theorem}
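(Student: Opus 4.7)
The plan is to close the triangle of three implications. The arrow $T\text{ compact}\Rightarrow\Re T\text{ compact}$ is immediate since $\Re T=(T+T^*)/2$ and the compact ideal is norm-closed and selfadjoint; the other two implications carry the weight.

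For $\omega_n\to 0\Rightarrow T$ compact I would dyadically discretise using the blocks $T_{nm}$ obtained by restricting the kernel \eqref{Q110} to $J_n\times J_m$. For $n<m$ the indicator $\mathds{1}_{t<s}(t,s)$ is identically $1$ on $J_n\times J_m$, so $T_{nm}$ is rank one with norm $\|\mathds{1}_{J_n}\varphi\|\cdot\|\mathds{1}_{J_m}\kappa\|=\omega_n\cdot 2^{-(m-n)/2}$ by \eqref{Q124}, and for $n=m$ the Hilbert--Schmidt norm of $T_{nn}$ is bounded by $\omega_n$. Writing $T=A+B$ with $A\DE\sum_n T_{nn}$ and $B\DE\sum_{k\geq 1}B_k$, $B_k\DE\sum_n T_{n,n+k}$, the operator $A$ is an orthogonal direct sum of operators of norm at most $\omega_n\to 0$ and is therefore compact, while each $B_k$ is an orthogonal direct sum of rank-one operators whose individual norms $\omega_n\cdot 2^{-k/2}$ tend to $0$, so $B_k$ is compact with $\|B_k\|\leq 2^{-k/2}\sup_n\omega_n$. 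Since $\omega_n\to 0$ ensures boundedness of $(\omega_n)$, $\sum_k\|B_k\|<\infty$, and $B$ is a norm-convergent sum of compact operators, hence itself compact.

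For the reverse direction $\Re T\text{ compact}\Rightarrow\omega_n\to 0$ I propose to exhibit, for each $n$ with $\omega_n\neq 0$, unit vectors converging weakly to $0$ on which the sesquilinear form of $\Re T$ takes a value proportional to $\omega_n$. Natural candidates are
\[
	u_n\DE\frac{\mathds{1}_{(c_n,b)}\kappa}{\|\mathds{1}_{(c_n,b)}\kappa\|},\qquad v_n\DE\frac{\mathds{1}_{J_n}\qu\varphi}{\|\mathds{1}_{J_n}\varphi\|}.
\]
A direct computation with \eqref{Q110} and \eqref{Q124} yields $\langle Tu_n,v_n\rangle=\omega_n$, while $Tv_n$ is supported in $(a,c_n)$ and $u_n$ on $(c_n,b)$, so that $\langle Tv_n,u_n\rangle=0$; together these give $\langle\Re T\,u_n,v_n\rangle=\omega_n/2$. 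The $v_n$ are orthonormal (disjoint supports), hence weakly null, and the Cauchy--Schwarz bound $|\langle u_n,w\rangle|\leq\|\mathds{1}_{(c_n,b)}w\|$ shows $u_n\rightharpoonup 0$ as well. Compactness of $\Re T$ then forces $\|\Re T\,u_n\|\to 0$, whence $\omega_n\to 0$.

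The main bookkeeping subtlety I expect is the degenerate case in which $\kappa$ vanishes on some left neighbourhood of $b$: the partition points then accumulate at some $c^*<b$ rather than at $b$, but $u_n$ is effectively supported on $(c_n,c^*)$ and the sharpened bound $|\langle u_n,w\rangle|\leq\|\mathds{1}_{(c_n,c^*)}w\|$ still gives $u_n\rightharpoonup 0$. Indices $n$ with $\omega_n=0$ (equivalently $\varphi\equiv 0$ on $J_n$) are harmless and may be ignored. Beyond this, the argument is a mild generalisation of the AJPR dyadic recipe, and no structural difficulty is anticipated.
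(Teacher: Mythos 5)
Your proposal is correct in substance. The implication $\lim_{n\to\infty}\omega_n=0\Rightarrow T$ compact is essentially the paper's own argument: the identical dyadic block decomposition into a diagonal part controlled by Hilbert--Schmidt norms $\leq\omega_n$ and superdiagonal parts $B_k$ with $\|B_k\|\leq 2^{-k/2}\sup_n\omega_n$. The one point you leave implicit is that your compact operator $A+B$ agrees with the a priori unbounded $T$ only on functions supported away from $b$; one should invoke closedness of $T$ (as the paper does) to conclude $T=A+B$ on all of $\dom T$. For the implication $\Re T$ compact $\Rightarrow\omega_n\to0$ you genuinely deviate: the paper assembles the superdiagonal blocks $\sum_nP_n(\Re T)P_{n+1}$ into a single rank-one Schmidt sum with singular values proportional to $\omega_n^*$, a formulation that carries over verbatim to the fully symmetric ideals of \thref{Q152}, whereas you test $\Re T$ against explicit weakly null sequences. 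For the $c_0$ case yours is a perfectly good and arguably more elementary route (it avoids the block-shift argument in the paper's footnote); its only drawback is that it does not generalise beyond $\SI_\infty$ and $\ell^\infty$.

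Two repairable slips in that second argument. First, drop the conjugation: with $v_n=\mathds{1}_{J_n}\qu\varphi/\|\mathds{1}_{J_n}\varphi\|$ the pairing produces $\int_{J_n}\varphi(t)^2\,dt$ rather than $\int_{J_n}|\varphi(t)|^2\,dt$, which can vanish for complex $\varphi$; the correct choice is $v_n\DE\mathds{1}_{J_n}\varphi/\|\mathds{1}_{J_n}\varphi\|$. Second, $u_n$ is supported up to $b$ and need not belong to $\dom T$ while $T$ is still possibly unbounded (one would need $\int_{c_n}^b|\varphi(t)|^2\|\mathds{1}_{(t,b)}\kappa\|^2\,dt<\infty$, which is not guaranteed), so the identity $\langle\Re T\,u_n,v_n\rangle=\tfrac12\langle Tu_n,v_n\rangle+\tfrac12\qu{\langle Tv_n,u_n\rangle}$ is not licensed as written. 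The fix is to put $v_n$ in the operator slot: one checks directly that $v_n\in\dom T\cap\dom T^*$, that $Tv_n$ is supported in $(a,c_n)$, and that $(T^*v_n)(t)=\kappa(t)\|\mathds{1}_{J_n}\varphi\|$ for $t\geq c_n$, whence
\[
	\big\langle(\Re T)v_n,u_n\big\rangle=\tfrac12\langle Tv_n,u_n\rangle+\tfrac12\langle T^*v_n,u_n\rangle
	=0+\tfrac12\,\|\mathds{1}_{J_n}\varphi\|\,\|\mathds{1}_{(c_n,b)}\kappa\|=\tfrac12\,\omega_n
\]
by \eqref{Q124}. Since the $v_n$ are orthonormal, hence weakly null, compactness of $\Re T$ gives $\|(\Re T)v_n\|\to0$ and therefore $\omega_n\to0$; your discussion of the degenerate case $c_n\to c^*<b$ is then not even needed, though it is correct as stated.
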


\noindent
The proof of \thref{Q120} is nearly verbatim the same as the argument given in \cite{aleksandrov.janson.peller.rochberg:2002}. 
We therefore skip details from the main text; the reader can find the fully 
elaborated argument in Appendix~A.

Rewriting the sequential condition occuring from \thref{Q120} to a continuous one as stated in \thref{Q102} is elementary; 
details are deferred to Appendix~B.

\begin{lemma}\thlab{Q122}
	Letting notation be as in \thref{Q120}, we have 
	\[
		\lim_{n\to\infty}\omega_n=0
		\ \Leftrightarrow\ 
		\lim_{t\nearrow b}\|\mathds{1}_{(a,t)}\varphi\|\|\mathds{1}_{(t,b)}\kappa\|=0
		.
	\vspace*{-5mm}
	\]
	\popQED\qed
\end{lemma}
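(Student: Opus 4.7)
The plan is to convert both limits into statements about the values of $f(t)\DE\|\mathds 1_{(a,t)}\varphi\|$ and $g(t)\DE\|\mathds 1_{(t,b)}\kappa\|$ at the sampling points $c_n$, and then bridge the gap between $J_n$-values and general $t$ using monotonicity. Note that $f$ is nondecreasing, $g$ is nonincreasing, and by construction $g(c_n)^2=2^{-n}\|\kappa\|^2$, so $\|\mathds 1_{J_n}\kappa\|=g(c_n)=2^{-n/2}\|\kappa\|$. Rearranging the definition of $\omega_k$ gives $\|\mathds 1_{J_k}\varphi\|^2=2^k\omega_k^2/\|\kappa\|^2$, and telescoping
\[
f(c_n)^2=\sum_{k=1}^n\big(f(c_k)^2-f(c_{k-1})^2\big)=\sum_{k=1}^n\|\mathds 1_{J_k}\varphi\|^2
\]
yields the central identity
\[
f(c_n)^2g(c_n)^2=\sum_{k=1}^n 2^{k-n}\omega_k^2.
\]

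The implication $fg\to 0\Rightarrow\omega_n\to 0$ is immediate: since $\|\mathds 1_{J_n}\varphi\|\leq f(c_n)$, we have $\omega_n\leq f(c_n)g(c_n)$, and the right-hand side tends to $0$ along the subsequence $t=c_n$.

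For the converse, the identity above expresses $f(c_n)^2g(c_n)^2$ as a discrete convolution of the null sequence $(\omega_k^2)$ (null sequences are bounded, so no integrability issue arises) with the geometric kernel $(2^{k-n})_{k\leq n}$. A standard split-the-sum estimate settles the matter: given $\epsilon>0$, pick $K$ with $\omega_k^2<\epsilon$ for $k>K$; the head contribution $\sum_{k=1}^K 2^{k-n}\omega_k^2$ decays like $2^{-n}$, while the tail $\sum_{k=K+1}^n 2^{k-n}\omega_k^2$ is bounded by $2\epsilon$. Hence $\lim_{n\to\infty}f(c_n)g(c_n)=0$. To upgrade from the sampling sequence to a continuous limit I invoke monotonicity: for $t\in J_n$ one has $f(t)\leq f(c_n)$ and $g(t)\leq g(c_{n-1})=\sqrt 2\,g(c_n)$, hence $f(t)g(t)\leq\sqrt 2\,f(c_n)g(c_n)\to 0$.

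No step looks genuinely delicate. The only point requiring a moment's thought is the Cesàro-type estimate for the geometric convolution, but the rapid decay of the kernel $2^{k-n}$ makes this routine; conceptually the argument just says that a bounded null sequence, averaged against a kernel of total mass $\leq 2$ that weights the most recent terms, still produces a null sequence.
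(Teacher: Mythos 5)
Your proof is correct and follows essentially the same route as the paper: the paper's one\mbox{-}line argument rests on the fact that a nonnegative sequence $(\alpha_n)$ tends to $0$ iff $2^{-n}\sum_{k=1}^n 2^k\alpha_k$ does, applied to $\alpha_n=2^{-n}\int_{J_n}|\varphi|^2$, which is precisely your identity $\|\mathds 1_{(a,c_n)}\varphi\|^2\|\mathds 1_{(c_n,b)}\kappa\|^2=\sum_{k=1}^n2^{k-n}\omega_k^2$ together with your split\mbox{-}the\mbox{-}sum estimate. You merely supply the details (the geometric\mbox{-}kernel averaging and the monotonicity bridge from the sampling points $c_n$ to arbitrary $t$) that the paper leaves implicit.
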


\noindent
Now \thref{Q102} follows easily. 

\begin{proof}[Proof of \thref{Q102}]
	\thref{Q120} implies that the weak Matsaev Theorem holds in the operator ideal $\OI=c_0$ of all compact operators 
	(remember that we do not distinguish between a concrete operator ideal and its sequence space).
	Hence the Independence Theorem applies, and together with \thref{Q119} and \thref{Q120} applied to $\Diag H$ we find 
	\begin{equation}\label{Q127}
		\OpB{H}\in\SI_\infty\ \Leftrightarrow\ \lim_{n\to\infty}\omega_n=0
		,
	\end{equation}
	where $\omega_n$ is buildt with $\kappa(t)\DE\sqrt{h_1(t)}$, $\varphi(t)\DE\sqrt{h_2(t)}$. 
	By \thref{Q122}
	\[
		\lim_{n\to\infty}\omega_n=0\ \Leftrightarrow\ 
		\lim_{t\nearrow b}\Big(\int_t^b h_1(s)\DD ds\cdot\int_a^t h_2(s)\DD ds\Big)=0
		,
	\]
	and the proof of \thref{Q102} is complete.
\end{proof}

\begin{remark}\thlab{Q123}
	Using the connection between Krein strings and diagonal canonical systems elaborated in 
	\cite{kaltenbaeck.winkler.woracek:bimmel}, the present \thref{Q102} yields a new proof of 
	the classical criterion \cite[Theorema~4,5]{kac.krein:1958} for a string to have discrete spectrum.
\end{remark}

\section{Summability and limit superior conditions}

A symmetrically normed ideal $\OI$ is a  (two-sided) operator ideal which is endowed 
with a norm $\|.\|_{\OI}$, such that
\begin{Itemize}
\item $(\OI,\|.\|_{\OI})$ is complete, 
\item ${\displaystyle
	\|ATB\|_{\OI}\leq\|A\|\cdot\|T\|_{\OI}\cdot\|B\|,\quad T\in\OI,\ A,B\in\mc B(\mc H),\ \mc H\text{ Hilbert space}
	}$,
\item $\|T\|_{\OI}=\|T\|$ for $T$ with $\dim\ran T=1$. 
\end{Itemize}
Basic examples of symmetrically normed ideal are the Schatten-von~Neumann ideals $\SI_p$, $1\leq p\leq\infty$. 

Our standard reference about symmetrically normed ideals is \cite{gohberg.krein:1965}; 
another classical reference is \cite{schatten:1970}.

Recall that an operator $T$ is called a Volterra operator, if it is compact and $\sigma(T)=\{0\}$. 

\begin{definition}\thlab{Q150}
	Let $\OI$ be a symmetrically normed ideal which is properly contained in $\SI_\infty$. 
	We say that Matsaev's Theorem holds in $\OI$, if the following statement is true.
	\begin{Itemize}
	\item Let $\mc H$ be a Hilbert space, and let $T$ be a Volterra operator in $\mc H$. 
		Then $\Re T\in\OI$ implies $T\in\OI$. 
	\end{Itemize}
\end{definition}

\noindent
Notice that an integral operator whose kernel has the form \eqref{Q110} has no nonzero eigenvalues. 
As a consequence of this and \thref{Q120}, we obtain the following fact 
(which also justifies our terminology introduced in \thref{Q111}). 

\begin{corollary}\thlab{Q151}
	Let $\OI\subsetneq\SI_\infty$ be a symmetrically normed ideal. If Matsaev's Theorem holds in $\OI$, 
	then also the weak Matsaev Theorem holds in $\OI$. 
	\popQED\qed
\end{corollary}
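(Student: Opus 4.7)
The goal is to derive the weak Matsaev Theorem (Definition Q111) from the full Matsaev Theorem (Definition Q150) for the ideal $\OI$. My plan is to take an integral operator $T$ with kernel of the form \eqref{Q110} satisfying $\Re T\in\OI$ and argue that $T$ is in fact a Volterra operator, so that the assumed Matsaev Theorem applies directly.

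The first step is to upgrade $\Re T\in\OI$ to compactness of $T$ itself. Since $\OI$ is properly contained in $\SI_\infty$, the operator $\Re T$ is compact. Theorem Q120 then yields $\lim_{n\to\infty}\omega_n=0$, and invoking Theorem Q120 once more (in the other direction) gives that $T$ itself is compact; in particular, $T$ is bounded, so talking about ``$T\in\OI$'' is meaningful at all.

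The second step is to verify the remark preceding the statement, namely that the integral operator with kernel \eqref{Q110} has no nonzero eigenvalues. If $Tf=\lambda f$ with $\lambda\ne 0$ and $f\in L^2(a,b)$, then
\[
	\varphi(t)\int_t^b f(s)\qu{\kappa(s)}\DD ds=\lambda f(t)\quad\text{a.e.}
\]
Multiplying both sides by $\qu{\kappa(t)}$ and setting $F(t)\DE\int_t^b f(s)\qu{\kappa(s)}\DD ds$ gives, for the function $F$, a pointwise a.e.\ identity of the form $-F'(t)=\frac{1}{\lambda}\varphi(t)\qu{\kappa(t)}F(t)$ on intervals where $F$ is absolutely continuous, together with the boundary condition $F(b^-)=0$. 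A standard Gronwall-type iteration on the Volterra integral equation $F(t)=\frac{1}{\lambda}\int_t^b\varphi(s)\qu{\kappa(s)}F(s)\DD ds$ (using that $\mathds{1}_{(t,b)}\varphi\qu\kappa$ is integrable on any compact subinterval of $(a,b)$ by Cauchy–Schwarz together with the local $L^2$-hypotheses) forces $F\equiv 0$ on each interval $(c,b)$ with $c>a$, and thus $f=0$ a.e. Hence $T$ has no nonzero eigenvalues.

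Combining the two steps, $T$ is compact and $\sigma(T)\subseteq\{0\}\cup\{\text{nonzero eigenvalues}\}=\{0\}$ (using that the nonzero spectrum of a compact operator consists entirely of eigenvalues), so $T$ is a Volterra operator. Since $\Re T\in\OI$ by assumption and Matsaev's Theorem holds in $\OI$, we conclude $T\in\OI$, which is exactly the statement of the weak Matsaev Theorem. The only genuinely non-routine point is the eigenvalue computation in the second step, but this is a textbook Volterra argument; if one prefers, it can be avoided entirely by citing any of the standard references establishing that operators with triangular Hilbert–Schmidt kernel (restricted to compact subintervals) are Volterra.
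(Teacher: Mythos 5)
Your architecture --- use \thref{Q120} to upgrade $\Re T\in\OI\subseteq\SI_\infty$ to compactness of $T$, show that $T$ has no nonzero eigenvalues, conclude that $T$ is Volterra, and invoke Matsaev's Theorem --- is precisely the paper's (there compressed into the sentence preceding the corollary), and your Step 1 is correct. The gap is in Step 2. The Gronwall iteration for $F(t)=\frac1\lambda\int_t^b\varphi(s)\qu{\kappa(s)}F(s)\DD ds$ must be anchored at the endpoint $b$, where the only boundary information $F(b^-)=0$ sits, and for that one needs $\varphi\qu\kappa\in L^1(c,b)$ for some $c$; integrability on compact subintervals of $(a,b)$ --- which is all the hypotheses provide --- lets you propagate a zero of $F$ to the left from an interior point, but never gets the induction started at $b$. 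Worse, the statement you are proving is false at this level of generality: on $(0,1)$ take $\kappa\equiv1$, $\varphi(t)=(1-t)^{-3/2}$ and, for any $\lambda>0$, $f(t)=\frac1\lambda(1-t)^{-3/2}\exp\bigl(-\frac2\lambda(1-t)^{-1/2}\bigr)$. Then $f\in L^2(0,1)$, the function $F(t)=\int_t^1 f(s)\DD ds=\exp\bigl(-\frac2\lambda(1-t)^{-1/2}\bigr)$ is a nonzero solution of your Volterra equation with $F(1^-)=0$, and $Tf=\lambda f$. (Here $T$ is unbounded, so the corollary itself is not endangered; but your Step 2 nowhere uses the compactness obtained in Step 1, so it cannot be correct as written. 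The same objection applies to your fallback: knowing that the restrictions to compact subintervals are Volterra does not by itself make $T$ Volterra.)

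The repair is the truncation idea you mention at the end, made quantitative using compactness. For $c\in(a,b)$ let $P_c$ be multiplication by $\mathds{1}_{(a,c)}$; then $P_cTP_c$ is Hilbert--Schmidt with triangular kernel, hence Volterra. Once $T$ is known to be compact, one has $\|T-P_cTP_c\|\to0$ as $c\nearrow b$: the block $P_cT(I-P_c)$ is rank one with norm $\|\mathds{1}_{(a,c)}\varphi\|\cdot\|\mathds{1}_{(c,b)}\kappa\|\to0$ by \thref{Q122}, and $(I-P_c)T$ is an operator of the same form built from $\mathds{1}_{(c,b)}\varphi$ and $\kappa$, whose norm is controlled by $\sup_{n\geq N}\omega_n\to0$ via (the proof of) \thref{Q170}. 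Since a nonzero eigenvalue of a compact operator is an isolated spectral point with a nontrivial finite-rank Riesz projection, and such projections persist under small norm perturbations, a norm limit of quasinilpotent operators which is compact is again quasinilpotent. With this substitution your proof closes and coincides with the paper's.
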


\noindent
Consequently, for every proper symmetrically normed ideal in which Matsaev's Theorem holds, the Independence Theorem applies. 

The characterisations stated in \thref{Q104,Q106} will be shown using another AJPR-type theorem which is a variant 
of \cite[Theorem~3.3]{aleksandrov.janson.peller.rochberg:2002} 
(proof details of this result are given in Appendix~A).

We use the notation introduced in Section~3, in particular recall \eqref{Q124} and \eqref{Q121}. 
Moreover, recall that an operator ideal $\OI$ is called fully symmetric, if for each two nonincreasing sequences of 
nonnegative numbers $(\alpha_n)_{n=1}^\infty$ and $(\beta_n)_{n=1}^\infty$ it holds that 
\[
	\Big((\alpha_n)_{n=1}^\infty\in\OI\ \wedge\ \forall n\in\bb N\DP \sum_{k=1}^n\beta_k\leq\sum_{k=1}^n\alpha_k\Big)
	\quad\Longrightarrow\quad (\beta_n)_{n=1}^\infty\in\OI
\]
\begin{theorem}\thlab{Q152}
	Let $-\infty\leq a<b\leq\infty$, let $\kappa,\varphi\DF(a,b)\to\bb C$ be measurable functions 
	with $\kappa\in L^2(a,b)$ and $\mathds{1}_{(a,c)}\varphi\in L^2(a,b)$, $c\in(a,b)$, and consider the 
	integral operator $T$ on $L^2(a,b)$ with kernel \eqref{Q110}. 
	Moreover, let $\OI\subsetneq\SI_\infty$ be an operator ideal. 
	\begin{Itemize}
	\item If $\OI$ is fully symmetric, then $\Re T\in\OI$ implies $(\omega_n)_{n=1}^\infty\in\OI$.
	\item If $\OI$ is symmetrically normed and Matsaev's Theorem holds in $\OI$, 
		then $(\omega_n)_{n=1}^\infty\in\OI$ implies $T\in\OI$. 
	\end{Itemize}
	\popQED\qed
\end{theorem}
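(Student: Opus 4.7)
The plan is to adapt the dyadic discretisation technique of \cite{aleksandrov.janson.peller.rochberg:2002}, working with the partition points $c_n$ for which $\|\mathds{1}_{(c_n,b)}\kappa\|^2=2^{-n}\|\kappa\|^2$ and the intervals $J_n=(c_{n-1},c_n)$. Writing $P_n$ for the multiplication projection onto $L^2(J_n)$, the support condition $\{t<s\}$ on the kernel of $T$ yields the block decomposition
\[
	T=\sum_{n}P_nTP_n+\sum_{n<m}P_nTP_m,
\]
in which each off-diagonal block $P_nTP_m$ is rank one with norm exactly $\|\mathds{1}_{J_n}\varphi\|\cdot\|\mathds{1}_{J_m}\kappa\|$, while each diagonal block $P_nTP_n$ is an integral operator of the same triangular shape on $L^2(J_n)$, of operator norm at most $\omega_n$.

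For the second assertion, I would first note that $(\omega_n)\in\OI\subsetneq\SI_\infty$ forces $\omega_n\to 0$, so that \thref{Q120} already supplies compactness of $T$. The strictly triangular kernel forces $\sigma(T)=\{0\}$, hence $T$ is a Volterra operator. By Matsaev's Theorem in $\OI$, it therefore suffices to prove $\Re T\in\OI$. This I would obtain from a Schur--type singular value estimate: expanding $\Re T$ in the dyadic blocks above, the off-diagonal contributions sum geometrically because $\|\mathds{1}_{J_m}\kappa\|=2^{-(m-1)/2}\|\kappa\|$, while the diagonal contributions are controlled by $\omega_n$. Grouping the blocks into a bounded number $N$ of ``coupled'' families yields an inequality of the form $a_k(\Re T)\leq C\,\omega^*_{\lceil k/N\rceil}$, and the rearrangement invariance of a symmetrically normed ideal on its singular value sequence converts this pointwise bound into $\Re T\in\OI$; Matsaev then upgrades to $T\in\OI$.

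For the first assertion, I would extract a lower bound on the singular values of $\Re T$ by testing against unit vectors supported in individual $J_n$ (for instance, suitable combinations of $\mathds{1}_{J_n}\varphi/\|\mathds{1}_{J_n}\varphi\|$ and $\mathds{1}_{J_n}\kappa/\|\mathds{1}_{J_n}\kappa\|$). The compressions $P_n(\Re T)P_n$ are of rank at most two, with norm bounded below by a universal multiple of $\omega_n$, and functions supported on different $J_n$ are automatically orthogonal. Summing the contributions therefore realises inside $\Re T$ an orthogonal direct sum whose singular value sequence is essentially a rearrangement of $(\omega_n)$, so that partial sums of a rearrangement of $(\omega_n)$ are dominated by partial sums of $(a_k(\Re T))$. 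The Hardy--Littlewood majorisation encoded in the definition of fully symmetric ideal then gives $(\omega_n)\in\OI$.

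The main obstacle is the careful bookkeeping required to relate the (non-monotone) sequence $(\omega_n)$ to its nonincreasing rearrangement $(\omega_n^*)$ without losing more than a universal constant and a bounded index shift: the Schur estimate in the upper bound and the construction of orthogonal compressions in the lower bound must both be compatible with Hardy--Littlewood majorisation, and must survive the passage from the Schatten-class setting of \cite[Theorem~3.3]{aleksandrov.janson.peller.rochberg:2002} to an arbitrary (fully symmetric or symmetrically normed) operator ideal on a half-line. Since the present statement is only a minor technical extension of that theorem, I expect these steps to be importable from the AJPR argument essentially verbatim, with modifications confined to replacing the concrete ideals $\SI_p$ by the abstract ideal $\OI$ and to accommodating the possibly unbounded $\varphi$ near $b$ via the local integrability assumption $\mathds{1}_{(a,c)}\varphi\in L^2$.
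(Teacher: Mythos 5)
Both halves of your plan have a genuine gap, and in both cases the problem sits in the diagonal blocks. For the first assertion you claim that the compressions $P_n(\Re T)P_n$ have rank at most two and norm bounded below by a universal multiple of $\omega_n$. Neither is true: $P_nTP_n$ is a full triangular integral operator on $L^2(J_n)$ (as you yourself note in your opening decomposition), and its real part can be arbitrarily small, or even zero, compared with $\omega_n$ --- take $\kappa$ supported in the left half and $\varphi$ in the right half of $J_n$, so that the constraint $t<s$ kills the kernel on $J_n\times J_n$ while $\omega_n=\|\mathds{1}_{J_n}\kappa\|\,\|\mathds{1}_{J_n}\varphi\|$ stays positive. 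A lower bound of the kind you want is available only from the \emph{off}-diagonal blocks: for $t\in J_n$, $s\in J_{n+1}$ the condition $t<s$ is automatic, so $P_nTP_{n+1}=(\,\cdot\,,\mathds{1}_{J_{n+1}}\kappa)\mathds{1}_{J_n}\varphi$ is exactly rank one of norm $2^{-1/2}\omega_n$, and moreover $P_nT^*P_{n+1}=0$, whence $P_n(\Re T)P_{n+1}=\tfrac12P_nTP_{n+1}$. The paper extracts the superdiagonal $\sum_nP_n(\Re T)P_{n+1}$ from $\Re T$ --- this is where full symmetry is used, via the Gohberg--Krein theorem on block truncations composed with a block shift --- and reads off $(\omega_n^*)$ as its singular value sequence. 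Your "orthogonal compressions onto the $J_n$" simply do not see $(\omega_n)$.

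For the second assertion, reducing to "$\Re T\in\OI$ and then apply Matsaev to the Volterra operator $T$" is structurally admissible, but your proposed Schur estimate $a_k(\Re T)\leq C\,\omega^*_{\lceil k/N\rceil}$ is unsubstantiated precisely on the diagonal: $\Re(P_nTP_n)$ is not of bounded rank, and the only a priori control you have on it is $\|P_nTP_n\|_{\SI_2}\leq\omega_n$, which does not dominate its full singular value sequence by a bounded number of copies of $\omega_n$ (for $\OI$ between $\SI_1$ and $\SI_2$, say, a Hilbert--Schmidt bound per block proves nothing). Controlling the diagonal blocks in $\OI$ already requires a Matsaev-type input blockwise; the paper does this by writing $P_nTP_n=\mc C\bigl((\,\cdot\,,\mathds{1}_{J_n}\kappa)\mathds{1}_{J_n}\varphi\bigr)$, where $\mc C$ is the triangular truncation transformator, which is bounded on $\OI$ exactly because Matsaev's theorem holds there; applied to the rank-one sum $Q_0=\sum_n\omega_n(\,\cdot\,,\hat\kappa_n)\hat\varphi_n\in\OI$ this gives $\sum_nP_nTP_n\in\OI$ in one stroke. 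The $l$-th superdiagonals are then rank-one sums with $\OI$-norm $2^{-l/2}\|(\omega_n)\|_{\OI}$ and are summed geometrically (a closedness argument is needed to identify the limit with $T$, which may a priori be unbounded). You should restructure the upward implication along these lines rather than trying to bound $\Re T$ first.
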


\noindent
To obtain the proof of \thref{Q104}, we use a particular class of symmetrically normed ideals. 

\begin{example}\thlab{Q154}
	Let $M:[0,\infty)\to[0,\infty)$ be a continuous increasing and convex function with $M(0)=0$, $\lim_{t\to\infty}M(t)=\infty$, 
	and $M(t)>0$, $t>0$. Assume that $\limsup_{t\searrow 0}\frac{M(2t)}{M(t)}<\infty$, and that $M$ is normalised by $M(1)=1$.
	The Orlicz space $\Orl{M}$ is the symmetrically normed ideal 
	\begin{align*}
		& \Orl{M}\DE\Big\{(\alpha_n)_{n=1}^\infty\in c_0\DS \sum_{n=1}^\infty M(|\alpha_n|)<\infty\Big\}
		,
		\\
		& \|(\alpha_n)_{n=1}^\infty\|_{\Orl{M}}\DE
		\inf\Big\{\beta>0\DS\sum_{n=1}^\infty M\Big(\frac{|\alpha_n|}\beta\Big)\leq 1\Big\}
		.
	\end{align*}
	Orlicz ideals are separable; in fact the unit vectors $e_j\DE(\delta_{nj})_{n=1}^\infty$, $j\in\bb N$, 
	form an unconditional basis in $\Orl{M}$. 
	For a systematic treatment of this type of sequence spaces we refer to \cite{maligranda:1989} 
	and \cite[Section~4.a]{lindenstrauss.tzafriri:1977}. 
\end{example}

\begin{remark}\thlab{Q156}
	Given a growth function $\ms g$ with order $\rho_{\ms g}>1$, set 
	\[
		M(t)\DE\frac 1{\ms g(\frac 1t)}
		.
	\]
	In general, $M$ will not be convex. However, based on \cite[Theorem~1.3.3]{bingham.goldie.teugels:1989}, 
	\cite[Proposition~1.22]{lelong.gruman:1986}, we always find an equivalent growth function $\ms g_1$, 
	i.e., one with $\lim_{t\to\infty}\frac{\ms g_1(t)}{\ms g(t)}=1$, such that the corresponding $M_1$ 
	satisfies all requirements made in \thref{Q154}. Then 
	\begin{equation}\label{Q158}
		\Orl{M_1}=
		\Big\{(\alpha_n)_{n=1}^\infty\in c_0\DS \sum_{n=1}^\infty \frac 1{\ms g\big(|\alpha_n|^{-1}\big)}<\infty\Big\}
		,
	\end{equation}
	and we may say that $\ms g$ induces an Orlicz ideal. 
\end{remark}

\noindent
Rewriting the sequential condition occuring from \thref{Q120} to a continuous one requires some technique about Orlicz spaces. 
Details are given in Appendix~B.

\begin{lemma}\thlab{Q155}
	Letting notation be as above, we have 
	\[
		\sum_{n=1}^\infty \frac 1{\ms g(\omega_n^{-1})}<\infty
		\ \Leftrightarrow\ 
		\int_a^b\Big[
		\ms g\Big(\big(\|\mathds{1}_{(a,t)}\varphi\|\|\mathds{1}_{(t,b)}\kappa\|\big)^{-1}\Big)\Big]^{-1}
		\mkern-5mu\cdot\mkern1mu
		\frac{|\kappa(t)|^2\DD dt}{\|\mathds{1}_{(t,b)}\kappa\|^2}<\infty
		.
	\]
	\popQED\qed
\end{lemma}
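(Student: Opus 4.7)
The plan is to interpose the auxiliary sequence
\[
	\eta_n\DE\|\mathds{1}_{(a,c_n)}\varphi\|\cdot\|\mathds{1}_{(c_n,b)}\kappa\|=2^{-n/2}\|\kappa\|\cdot\|\mathds{1}_{(a,c_n)}\varphi\|
\]
between the two sides of the asserted equivalence, and to prove separately that the integral is comparable to the sum $\sum_{n\geq 1}\frac{1}{\ms g(\eta_n^{-1})}$, and that this latter sum converges if and only if $\sum_{n\geq 1}\frac{1}{\ms g(\omega_n^{-1})}$ converges. Abbreviate $\Omega(t)\DE\|\mathds{1}_{(a,t)}\varphi\|\cdot\|\mathds{1}_{(t,b)}\kappa\|$.

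For the first of the two comparisons, the key observation is the elementary identity
\[
	\frac{|\kappa(t)|^2}{\|\mathds{1}_{(t,b)}\kappa\|^2}\DD dt=-d\log\|\mathds{1}_{(t,b)}\kappa\|^2
	,
\]
hence by the dyadic normalisation \eqref{Q124} one has $\int_{J_n}\frac{|\kappa(t)|^2}{\|\mathds{1}_{(t,b)}\kappa\|^2}\DD dt=\log 2$ for every $n$. On $J_n$ the nonincreasing factor $\|\mathds{1}_{(t,b)}\kappa\|$ is within a factor $\sqrt 2$ of $2^{-n/2}\|\kappa\|$, and the nondecreasing factor $\|\mathds{1}_{(a,t)}\varphi\|$ lies between its values at the two endpoints; multiplying these out one obtains $\Omega(t)\in[\eta_{n-1}/\sqrt 2,\sqrt 2\,\eta_n]$ on $J_n$. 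Since $\ms g$ is regularly varying of positive index, $\ms g(\lambda r)\asymp\ms g(r)$ uniformly for $\lambda$ in a compact subset of $(0,\infty)$, so integrating over $J_n$ and summing over $n$ yields the estimate; the degenerate $n=1$ term at the lower end disappears harmlessly because $\eta_0=0$.

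For the second comparison, a direct telescoping of the definitions gives the recursion $\eta_n^2=\omega_n^2+\tfrac 12\eta_{n-1}^2$, whence $\omega_n\leq\eta_n$ and therefore the implication $\sum\frac{1}{\ms g(\eta_n^{-1})}<\infty\Rightarrow\sum\frac{1}{\ms g(\omega_n^{-1})}<\infty$ is trivial. For the reverse implication, iterating the recursion and using subadditivity of $\sqrt{\cdot}$ one obtains the Hardy-type estimate
\[
	\eta_n\leq\sum_{k=1}^n 2^{(k-n)/2}\omega_k
	.
\]
Passing, as permitted by \thref{Q156}, to an equivalent growth function $\ms g_1$ so that $M(t)\DE 1/\ms g_1(1/t)$ is a convex Orlicz function with the $\Delta_2$ condition at $0$, I then apply Jensen's inequality with the probability weights $\hat c_k=c_k/S$, where $c_k=2^{(k-n)/2}$ and $S=\sum_{k\leq n}c_k\leq(1-2^{-1/2})^{-1}$, and use the $\Delta_2$ condition to absorb the bounded total mass $S$. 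This produces $M(\eta_n)\leq C\sum_{k\leq n}2^{(k-n)/2}M(\omega_k)$, and a standard Hardy-style swap of summation (using $\sum_{n\geq k}2^{(k-n)/2}=(1-2^{-1/2})^{-1}$) delivers $\sum_n M(\eta_n)\leq C'\sum_k M(\omega_k)$. By \eqref{Q158} this is precisely the desired reverse implication.

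The main obstacle is this last step: the passage from an $\omega$-summability condition to an $\eta$-summability condition, since $\eta_n$ aggregates over all of $(a,c_n)$ and may exceed $\omega_n$ substantially. The argument just sketched rests on $M$ being convex together with the $\Delta_2$ condition, and on the convergence of the geometric series in the Hardy interchange; both features ultimately depend on the assumption $\rho_{\ms g}>1$, and this is exactly the threshold below which the method ceases to work, in accordance with the discussion in the introduction.
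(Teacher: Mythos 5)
Your proof is correct, and for the one nontrivial step it takes a genuinely different route from the paper. The first half of your argument -- sandwiching $\Omega(t)$ on $J_n$ between $\eta_{n-1}/\sqrt2$ and $\sqrt2\,\eta_n$ and using that each $J_n$ carries mass $\log 2$ for the measure $|\kappa(t)|^2\,dt/\|\mathds{1}_{(t,b)}\kappa\|^2$ -- matches the paper's pointwise bounds, and your identity $\eta_n^2=\omega_n^2+\tfrac12\eta_{n-1}^2$, hence $\eta_n\le\sum_{k\le n}2^{(k-n)/2}\omega_k$, is exactly the content of the paper's estimate $\Omega(t)\le\big(\sum_{k\le n}\|\mathds{1}_{J_k}\varphi\|\big)2^{-(n-1)/2}\|\kappa\|$. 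The divergence is in how the implication $\sum_k M(\omega_k)<\infty\Rightarrow\sum_n M(\eta_n)<\infty$ is closed: the paper proves the boundedness of this weighted Hardy-type summation operator on $\Orl{M}$ by dualising (\thref{Q192}\Enumref{2}), i.e.\ via the complementary Orlicz function $M^*$, the H\"older inequality for Orlicz norms, and control of the Orlicz indices of $M^*$, which is where $\rho_{\ms g}>1$ enters; you instead use Jensen's inequality for the convex normalised $M$, absorb the bounded total mass $S\le(1-2^{-1/2})^{-1}$ through the doubling condition, and finish with a Fubini-type interchange of the two sums. Your route is more elementary and self-contained (no duality theory of Orlicz sequence spaces is needed) and is a legitimate replacement for \thref{Q192}\Enumref{2} in this application; the paper's version buys a cleaner, reusable norm inequality on $\Orl{M}$. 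One small imprecision in your closing discussion: the convergence of the geometric series $\sum_{n\ge k}2^{(k-n)/2}$ comes from $\kappa\in L^2$ and the dyadic choice of the $c_n$, not from $\rho_{\ms g}>1$; in your argument the hypothesis $\rho_{\ms g}>1$ is needed only to guarantee that an equivalent convex $M$ with the doubling property exists.
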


\begin{proof}[Proof of \thref{Q104}]
	The left and right sides of the equivalence asserted in \thref{Q104} do not change their truth value 
	when we pass from the given growth function $\ms g$ to an equivalent one. Hence, we may assume without loss 
	of generality that $\ms g$ gives rise to an Orlicz space $\Orl{M}$ as in \eqref{Q158}.

	Since $\rho_{\ms g}>1$, we can apply \cite[Footnote~12,p.139]{gohberg.krein:1967} and conclude that 
	Matsaev's Theorem holds in $\Orl{M}$. 
	By \thref{Q151} the weak Matsaev Theorem holds in $\Orl{M}$, and hence the Independence Theorem applies. 
	Clearly $\Orl{M}$ is fully symmetric, and \thref{Q113} combined with \thref{Q119} and \thref{Q152} yields 
	\begin{equation}\label{Q157}
		\OpB{H}\in\Orl{M}\ \Leftrightarrow\ \sum_{n=1}^\infty\frac 1{\ms g(\omega_n^{-1})}<\infty
		,
	\end{equation}
	where $\omega_n$ is buildt with $\kappa(t)\DE\sqrt{h_1(t)}$, $\varphi(t)\DE\sqrt{h_2(t)}$. 
	By \thref{Q155}
	\begin{multline*}
		\sum_{n=1}^\infty\frac 1{\ms g(\omega_n^{-1})}<\infty
		\ \Leftrightarrow\quad
		\\
		\int_a^b\bigg[
		\ms g\bigg(\Big(\int_t^b h_1(s)\DD ds\cdot\int_a^t h_2(s)\DD ds\Big)^{-\frac 12}\bigg)
		\bigg]^{-1}
		\mkern-5mu\cdot\mkern1mu
		\frac{h_1(t)\DD dt}{\int_t^b h_1(s)\DD ds}<\infty
		,
	\end{multline*}
	and the proof of \thref{Q104} is complete.
\end{proof}

\noindent
To obtain the proof of \thref{Q106}, we use another particular class of symmetrically normed ideals. 
In the following we denote for a sequence $(\alpha_n)_{n=1}^\infty\in c_0$,
by $(\alpha_n^*)_{n=1}^\infty$ its nonnegative nonincreasing rearrangement, i.e., 
the sequence made up of the elements $|\alpha_n|$ arranged nonincreasingly.

\begin{example}\thlab{Q153}
	Let $(\pi_n)_{n=1}^\infty$ be a nonincreasing positive sequence with $\pi_1=1$, $\lim_{n\to\infty}\pi_n=0$, 
	and $\sum_{n=0}^\infty\pi_n=\infty$. 
	The Lorentz space $\LI{\pi}$ is the symmetrically normed ideal 
	\begin{align*}
		& \LI{\pi}\DE\Big\{(\alpha_n)_{n=1}^\infty\in c_0\DS 
		\sup_{n\in\bb N}\Big(\sum_{k=1}^n\alpha_k^*\Big/\sum_{k=1}^n\pi_k\Big)<\infty\Big\}
		,
		\\
		& \|(\alpha_n)_{n=1}^\infty\|_{\LI{\pi}}\DE
		\sup_{n\in\bb N}\Big(\sum_{k=1}^n\alpha_k^*\Big/\sum_{k=1}^n\pi_k\Big)
		.
	\end{align*}
	Lorentz spaces may or may not be separable, and we denote by $\SLI{\pi}$ the separable part of $\LI{\pi}$. 
	I.e., $\SLI{\pi}$ is the closure in $\LI{\pi}$ of the linear subspace of all finite rank operators. 
	For this type of sequence spaces we refer to \cite[Theorem~III.14.1]{gohberg.krein:1965} or 
	\cite[Example~1.2.7]{lord.sukochev.zanin:2013}.
\end{example}

\noindent
This type of symmetrically normed ideals correspond to the consideration of limit superior conditions. 
Let $\ms g$ be a growth function with $\rho_{\ms g}>1$, and set $\pi_n\DE\frac 1{\ms g^{-1}(n)}$ where $\ms g^{-1}$ 
is the inverse function of $\ms g$. 
Then
\begin{align*}
	\LI{\pi}= &\, 
	\big\{(\alpha_n)_{n=1}^\infty\DS \limsup_{n\to\infty}\alpha_n^*\ms g^{-1}(n)<\infty\big\}
	,
	\\
	\SLI{\pi}= &\, \big\{(\alpha_n)_{n=1}^\infty\DS \lim_{n\to\infty}\alpha_n^*\ms g^{-1}(n)=0\big\}
	,
\end{align*}
cf.\ \cite[Theorem~III.14.2]{gohberg.krein:1965}.

\begin{proof}[Proof of \thref{Q106}]
	Since $\rho_{\ms g}>1$, we can apply \cite[Theorem~III.9.1]{gohberg.krein:1967} and conclude that Matsaev's Theorem holds 
	in $\LI{\pi}$ and in $\SLI{\pi}$. 
	By \thref{Q151} the weak Matsaev Theorem holds in $\Orl{M}$, and hence the Independence Theorem applies. 
	Moreover, $\LI{\pi}$ and $\SLI{\pi}$ are fully symmetric, cf.\ \cite{gohberg.krein:1965}.
	Now \thref{Q113} combined with \thref{Q119} and \thref{Q152} yields 
	\begin{equation}\label{Q159}
	\begin{aligned}
		& \OpB{H}\in\LI{\pi}\ \Leftrightarrow\ \limsup_{n\to\infty}\omega_n^*\ms g^{-1}(n)<\infty
		,
		\\
		& \OpB{H}\in\SLI{\pi}\ \Leftrightarrow\ \lim_{n\to\infty}\omega_n^*\ms g^{-1}(n)=0
		,
	\end{aligned}
	\end{equation}
	where $\omega_n$ is buildt with $\kappa(t)\DE\sqrt{h_1(t)}$, $\varphi(t)\DE\sqrt{h_2(t)}$. 

	Matching notation shows that these numbers $\omega_n$ are just the same as the numbers written in 
	\thref{Q106}. By a property of growth functions, it holds that 
	\begin{align*}
		& \limsup_{n\to\infty}\omega_n^*\ms g^{-1}(n)<\infty\ \Leftrightarrow\ 
		\limsup_{n\to\infty}\frac n{\ms g((\omega_n^*)^{-1})}<\infty
		,
		\\
		& \lim_{n\to\infty}\omega_n^*\ms g^{-1}(n)=0\ \Leftrightarrow\ 
		\lim_{n\to\infty}\frac n{\ms g((\omega_n^*)^{-1})}=0
		,
	\end{align*}
	and the proof is complete.
\end{proof}

\section{Bounded invertibility, normalisation, and examples}
\subsection{Bounded invertibility}

The present method also yields a condition for the model operator $\OpA{H}$ to be boundedly invertible. 

\begin{theorem}\thlab{Q172}
	Let $H=\smmatrix{h_1}{h_3}{h_3}{h_2}$ be a Hamiltonian on $[a,b)$ and assume that $\int_a^b h_1(s)\DD ds<\infty$. 
	Then $0\notin\sigma(\OpA{H})$ if and only if 
	\[
		\limsup_{t\nearrow b}\Big(\int_t^b h_1(s)\DD ds\cdot\int_a^t h_2(s)\DD ds\Big)<\infty
		.
	\]
\end{theorem}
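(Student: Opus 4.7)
My plan is to adapt the proof of \thref{Q102} in essentially routine fashion, replacing the ideal $\SI_\infty$ of compact operators by the whole algebra $\mc B(\mc H)$ and the condition ``$\lim_n\omega_n=0$'' by ``$\sup_n\omega_n<\infty$''. The starting point is as in Section~3: since $\int_a^b h_1(s)\DD ds<\infty$, \thref{Q114} shows that $\OpA H$ is injective, so $0\notin\sigma(\OpA H)$ is equivalent to the inverse $\OpB H$ being a bounded everywhere defined operator, and by \thref{Q116} this in turn is equivalent to boundedness of the integral operator $\OpC H$ on $L^2(Idt)$.

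The main technical ingredient is a boundedness analogue of \thref{Q120}: for an integral operator $T$ on $L^2(a,b)$ with kernel of the form \eqref{Q110}, the three statements ``$T$ is bounded'', ``$\Re T$ is bounded'', and ``$\sup_{n\in\bb N}\omega_n<\infty$'' (with $\omega_n$ as in \eqref{Q121}) are equivalent. I expect this to follow by reading through the AJPR dyadic discretisation of Appendix~A with the sequence space $c_0$ replaced by $\ell^\infty$ throughout; alternatively, ``$T$ is bounded if and only if $\sup_n\omega_n<\infty$'' is a reformulation of the classical Muckenhoupt--Bradley Hardy inequality with two weights, and the corresponding equivalence for $\Re T$ is obtained by the same discretisation. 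This is the step I anticipate as the main obstacle, since it requires auditing the Appendix~A proof to confirm the $\ell^\infty$-variant.

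With this in hand, the proof of the Independence Theorem \thref{Q113} applies verbatim with $\OI=\mc B(\mc H)$, the boundedness analogue of \thref{Q120} supplying the weak Matsaev property of \thref{Q111} for integral operators of the form \eqref{Q110}. Thus $\OpC H$ is bounded if and only if $\OpC{\Diag H}$ is bounded; by \thref{Q119} the latter is equivalent to boundedness of the single integral operator $S_{21}$ with kernel $\mathds{1}_{t<s}(t,s)\sqrt{h_2(t)}\sqrt{h_1(s)}$; and by the boundedness analogue of \thref{Q120} applied to $S_{21}$ with $\kappa(t)\DE\sqrt{h_1(t)}$, $\varphi(t)\DE\sqrt{h_2(t)}$, this is equivalent to $\sup_n\omega_n<\infty$. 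Rewriting the sequential condition in continuous form exactly as in \thref{Q122} but with $\sup$ in place of $\lim$ yields
\begin{equation*}
    \sup_{t\in[a,b)}\Big(\int_t^b h_1(s)\DD ds\cdot\int_a^t h_2(s)\DD ds\Big)<\infty.
\end{equation*}

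Finally I would upgrade $\sup$ to $\limsup$: the function $\Phi(t)\DE\int_t^b h_1(s)\DD ds\cdot\int_a^t h_2(s)\DD ds$ is continuous on $[a,b)$ by the standing integrability hypothesis on $H$, and vanishes at $t=a$; in particular it is bounded on every compact subinterval $[a,c]\subset[a,b)$, so $\sup_{t\in[a,b)}\Phi(t)<\infty$ if and only if $\limsup_{t\nearrow b}\Phi(t)<\infty$, which is the condition stated in \thref{Q172}.
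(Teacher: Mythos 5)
Your proposal is correct and follows essentially the same route as the paper: the paper's Section~5.1 states exactly the boundedness analogue of \thref{Q120} you anticipate (\thref{Q170}, proved in Appendix~A for $\OI=\ell^\infty$ alongside the $c_0$ case), applies the Independence Theorem with $\OI=\ell^\infty$, reduces to $S_{21}$ via \thref{Q119}, and converts the sequential condition to the continuous one via \thref{Q171}. The step you flagged as the main obstacle is indeed handled in the paper by the same $\ell^\infty$ reading of the AJPR discretisation.
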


\noindent
Remember here \thref{Q103}.

This result implies \cite[Theorem~1.5]{remling.scarbrough:1811.07067v1}.

Again we use the notation introduced in Section~3, in particular recall \eqref{Q124} and \eqref{Q121}. 
The proof of \thref{Q172} is based on the following AJPR-type theorem, which is a variant of 
\cite[Theorem~3.1]{aleksandrov.janson.peller.rochberg:2002} (proof details are given in Appendix~A). 

\begin{theorem}\thlab{Q170}
	Let $-\infty\leq a<b\leq\infty$, let $\kappa,\varphi\DF(a,b)\to\bb C$ be measurable functions 
	with $\kappa\in L^2(a,b)$ and $\mathds{1}_{(a,c)}\varphi\in L^2(a,b)$, $c\in(a,b)$, and consider the 
	integral operator $T$ on $L^2(a,b)$ with kernel \eqref{Q110}. 
	Then 
	\begin{center}
	\begin{tikzcd}[column sep=nixx]
		\parbox{21mm}{$T$ is bounded \raisebox{-8pt}{\rule{0pt}{20pt}}} \arrow[Rightarrow]{rr}
		& \rule{0pt}{0pt} & \parbox{26mm}{ \raisebox{-8pt}{\rule{0pt}{20pt}} $\Re T$ is bounded} \arrow[Rightarrow]{ld}\\
		& \parbox{24mm}{\rule{0pt}{12pt}$\sup_{n\in\bb N}\omega_n<\infty$} \arrow[Rightarrow]{lu} &
	\end{tikzcd}
	\\[-3mm] \hfill
	\end{center}
	where $\omega_n$ are as in \eqref{Q121}.
	\popQED\qed
\end{theorem}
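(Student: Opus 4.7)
The plan is to close a cycle of three implications. The easy leg, $T$ bounded $\Rightarrow\Re T$ bounded, is immediate since $\Re T=\frac 12(T+T^*)$ and $\mc B(\mc H)$ is closed under adjoints. The two non-trivial legs both exploit the dyadic calibration \eqref{Q124}, which makes $\{J_n\}_{n\in\bb N}$ a system of building blocks adapted to the one-sided kernel \eqref{Q110}.

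For the leg $\Re T$ bounded $\Rightarrow\sup_n\omega_n<\infty$, I would use a test vector argument with functions supported on two adjacent dyadic intervals. Concretely, for each $n\in\bb N$ set
\[
	f\DE\|\mathds{1}_{J_n}\varphi\|^{-1}\mathds{1}_{J_n}\varphi,\qquad g\DE\|\mathds{1}_{J_{n+1}}\kappa\|^{-1}\mathds{1}_{J_{n+1}}\kappa;
\]
these are unit vectors. Since $\supp f$ lies entirely to the left of $\supp g$ and the kernel of $T$ is supported on $\{t<s\}$, the integrand defining $\langle Tg,f\rangle$ is pointwise nonnegative, and a direct calculation gives $\langle Tg,f\rangle=\|\mathds{1}_{J_n}\varphi\|\cdot\|\mathds{1}_{J_{n+1}}\kappa\|=2^{-1/2}\omega_n$. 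On the other hand $\langle T^*g,f\rangle=0$, because the kernel of $T^*$ lives on $\{t>s\}$ and the supports of $f,g$ sit the wrong way round. Adding, $|\langle\Re T\cdot g,f\rangle|=\frac{1}{2\sqrt 2}\omega_n$, which yields $\sup_n\omega_n\leq 2\sqrt 2\|\Re T\|$.

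For the leg $\sup_n\omega_n<\infty\Rightarrow T$ bounded, I would split $T=T_d+T_o$ into a diagonal part, with kernel restricted to $\bigcup_n J_n\times J_n$, and a strictly upper part, with kernel restricted to $\bigcup_{n<m}J_n\times J_m$; the lower blocks vanish by upper-triangularity of the kernel. On each $J_n$ the diagonal block acts as a Volterra-type integral operator on $L^2(J_n)$ whose norm is bounded by $\omega_n$ via Cauchy--Schwarz, so that $\|T_d\|\leq\sup_n\omega_n$. Each strictly upper block, regarded as an operator from $L^2(J_m)$ into $L^2(J_n)$, is rank one with norm $\|\mathds{1}_{J_n}\varphi\|\cdot\|\mathds{1}_{J_m}\kappa\|=\omega_n\cdot 2^{(n-m)/2}$ by the dyadic calibration. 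A Schur-type estimate based on this geometric decay in $m-n$ then bounds $\|T_o\|$ by an absolute constant times $\sup_n\omega_n$.

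The hard part will be the off-diagonal estimate in the last step: one has a doubly indexed, upper-triangular family of rank-one operators and must pass from individual block norms to a single operator-norm bound on $T_o$. The crucial input is the geometric factor $2^{(n-m)/2}$ supplied by \eqref{Q124}; summing geometrically in $n$ for fixed $m$, and vice versa, produces uniformly bounded row and column sums of block norms, and it is precisely this geometric margin that the introduction identifies as no longer available below trace class.
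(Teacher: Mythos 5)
Your proposal is correct and follows essentially the same route as the paper's Appendix~A argument: the downward leg is the paper's identification of the rank-one blocks $P_nTP_{n+1}=(\textvisiblespace\,,\mathds{1}_{J_{n+1}}\kappa)\mathds{1}_{J_n}\varphi$ of norm $2^{-1/2}\omega_n$ (your test vectors are exactly the maximizers), and the upward leg uses the same diagonal/off-diagonal splitting with the Hilbert--Schmidt bound $\omega_n$ on diagonal cells and the geometric factor $2^{(n-m)/2}$ on the rank-one off-diagonal cells. The only cosmetic difference is that you control the off-diagonal part by a block Schur test, whereas the paper groups the cells into superdiagonals $Q_l$ of norm $2^{-l/2}\sup_n\omega_n$ and sums the geometric series; both yield the same bound.
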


\noindent
Rewriting the sequential condition occuring from \thref{Q170} to a continuous one is elementary; 
again details are deferred to Appendix~B.

\begin{lemma}\thlab{Q171}
	Letting notation be as above, we have 
	\[
		\sup_{n\to\infty}\omega_n<\infty
		\ \Leftrightarrow\ 
		\limsup_{t\nearrow b}\|\mathds{1}_{(a,t)}\varphi\|\|\mathds{1}_{(t,b)}\kappa\|<\infty
		.
	\vspace*{-5mm}
	\]
	\popQED\qed
\end{lemma}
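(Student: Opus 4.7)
The plan is to translate the dyadic sequence condition on $\omega_n$ into the continuous $\limsup$ condition by exploiting the monotonicity of the two distribution functions and the geometric nature of the dyadic partition. Let me write $K(t)\DE\|\mathds 1_{(t,b)}\kappa\|^2$ and $\Phi(t)\DE\|\mathds 1_{(a,t)}\varphi\|^2$, so that $K$ is nonincreasing, $\Phi$ is nondecreasing, and by the defining property of the sequence $(c_n)$ we have $K(c_n)=2^{-n}\|\kappa\|^2$. Consequently $\|\mathds 1_{J_n}\kappa\|^2=K(c_{n-1})-K(c_n)=2^{-n}\|\kappa\|^2=K(c_n)$, and
\[
 \omega_n^2 \;=\; K(c_n)\bigl(\Phi(c_n)-\Phi(c_{n-1})\bigr).
\]

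For the implication $\Leftarrow$, assume $\limsup_{t\nearrow b}\Phi(t)K(t)=M^2<\infty$. Then
\[
 \omega_n^2 \;=\; K(c_n)\bigl(\Phi(c_n)-\Phi(c_{n-1})\bigr) \;\le\; K(c_n)\Phi(c_n),
\]
and since $K(c_n)\Phi(c_n)\le M^2+1$ for all sufficiently large $n$, the sequence $(\omega_n)$ is bounded.

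For the implication $\Rightarrow$, assume $W\DE\sup_n\omega_n<\infty$. Using $\Phi(c_0)=\Phi(a)=0$ and telescoping,
\[
 \Phi(c_n) \;=\; \sum_{k=1}^n \bigl(\Phi(c_k)-\Phi(c_{k-1})\bigr) \;=\; \|\kappa\|^{-2}\sum_{k=1}^n 2^k\omega_k^2 \;\le\; \|\kappa\|^{-2}W^2\cdot 2^{n+1}.
\]
Combined with $K(c_n)=2^{-n}\|\kappa\|^2$ this gives $\Phi(c_n)K(c_n)\le 2W^2$. For arbitrary $t\in[c_{n-1},c_n]$, monotonicity of $\Phi$ and $K$ yields
\[
 \Phi(t)K(t) \;\le\; \Phi(c_n)K(c_{n-1}) \;=\; 2\,\Phi(c_n)K(c_n) \;\le\; 4W^2,
\]
so $\limsup_{t\nearrow b}\sqrt{\Phi(t)K(t)}\le 2W$, and the proof is complete.

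There is no real obstacle here; the whole argument reduces to careful bookkeeping with the dyadic values $K(c_n)=2^{-n}\|\kappa\|^2$. The one point to be watched is applying the monotonicities in the correct direction when passing from an arbitrary $t\in[c_{n-1},c_n]$ to the endpoints (using $\Phi(t)\le\Phi(c_n)$ and $K(t)\le K(c_{n-1})$, where the latter equals $2K(c_n)$), which only costs a universal factor of $4$ in the final constant.
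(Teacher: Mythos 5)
Your proof is correct and follows essentially the same route as the paper's: the paper's one-line argument ("$(\alpha_n)$ bounded iff $(2^{-n}\sum_{k=1}^n 2^k\alpha_k)$ bounded", applied with $\alpha_n=2^{-n}\int_{J_n}|\varphi|^2$) is exactly your telescoping identity $\Phi(c_n)K(c_n)=\|\kappa\|^2\,2^{-n}\sum_{k=1}^n 2^k\alpha_k$ together with the monotonicity step from the grid points $c_n$ to arbitrary $t$. You have merely written out the bookkeeping the paper leaves implicit.
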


\begin{proof}[Proof of \thref{Q172}]
	\thref{Q170} implies that the weak Matsaev Theorem holds in the operator ideal $\OI=\ell^\infty$ of all bounded operators.
	Hence the Independence Theorem applies, and together with \thref{Q119} and \thref{Q170} applied to $\Diag H$ we find 
	\begin{equation}\label{Q181}
		\OpB{H}\in\mc B(L^2(H))\ \Leftrightarrow\ \limsup_{n\to\infty}\omega_n<\infty
		,
	\end{equation}
	where $\omega_n$ is buildt with $\kappa(t)\DE\sqrt{h_1(t)}$, $\varphi(t)\DE\sqrt{h_2(t)}$. 
	By \thref{Q171}
	\[
		\sup_{n\to\infty}\omega_n<\infty\ \Leftrightarrow\ 
		\limsup_{t\nearrow b}\Big(\int_t^b h_1(s)\DD ds\cdot\int_a^t h_2(s)\DD ds\Big)<\infty
		,
	\]
	and the proof of \thref{Q172} is complete.
\end{proof}

\subsection{The normalisation $\bm{\int_a^b h_1(s)\DD ds<\infty}$}

In this section we provide the arguments announced in \thref{Q103}. 
Denote by $T_{\min}(H)$ and $T_{\max}(H)$ the minimal and maximal operators induced by the equation 
\eqref{Q101}, cf.\ \cite[Section~3]{hassi.snoo.winkler:2000}.
First observe that, in the cases of present interest, the space $L^2(H)$ always contains some constant.

\begin{lemma}\thlab{Q174}
	Assume that $0\notin\sigma_{\ess}(\OpA{H})$. 
	Then there exists $\phi\in\bb R$ such that 
	\begin{equation}\label{Q182}
		\binom{\cos\phi}{\sin\phi}\in L^2(H)
		.
	\end{equation}
\end{lemma}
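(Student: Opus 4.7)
The plan is to pin down a constant direction in $L^2(H)$ using the Weyl coefficient of the canonical system in the limit-point case. Denote
\[
	N \DE \bigl\{v\in\bb C^2 \DS \textstyle\int_a^b v^*H(s)v\DD ds<\infty\bigr\},
\]
the space of constant $\bb C^2$-valued functions belonging to $L^2(H)$. The conclusion \eqref{Q182} is equivalent to $N\neq\{0\}$. The standing hypothesis $\int_a^b\tr H\DD ds=\infty$ forces $\dim_{\bb C}N\leq 1$: if both standard basis vectors lay in $N$, then $h_1,h_2\in L^1(a,b)$ and hence $\tr H\in L^1(a,b)$, a contradiction. Thus it suffices to rule out the case $N=\{0\}$ under the assumption $0\notin\sigma_{\ess}(\OpA{H})$.

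To set up the analysis, I would consider the one-parameter family of selfadjoint extensions $A_\tau$ of $T_{\min}(H)$ corresponding to the boundary condition $(\cos\tau,\sin\tau)y(a)=0$, so that $A_0=\OpA{H}$. Since limit-point case holds at $b$ and regularity at $a$ gives defect indices $(1,1)$, Krein's resolvent formula yields $\sigma_{\ess}(A_\tau)=\sigma_{\ess}(A_0)\not\ni 0$ for every $\tau$. Let $Y_1(\cdot,z), Y_2(\cdot,z)$ be the fundamental solutions of \eqref{Q101} with $Y_1(a,z)=\binom{1}{0}$ and $Y_2(a,z)=\binom{0}{1}$; the limit-point property singles out a Herglotz function $q(z)$ on $\bb C\setminus\bb R$ such that $Y_1(\cdot,z)+q(z)Y_2(\cdot,z)$ is, up to a scalar, the unique $L^2(H)$-solution near $b$. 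A direct check against the boundary condition at $a$ then shows that $\lambda\in\bb R$ is an eigenvalue of $A_\tau$ precisely when $\cos\tau+q(\lambda)\sin\tau=0$, with the convention $\tau=0 \leftrightarrow q$ has a pole at $\lambda$.

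Because $0\notin\sigma_{\ess}(A_\tau)$ for every $\tau$, the function $q$ is meromorphic in a neighbourhood of $0$, and a case distinction closes the argument. If $q$ has a pole at $0$, then $0\in\sigma_p(A_0)$ with eigenvector $Y_2(\cdot,0)\equiv\binom{0}{1}$, which, being an element of $\dom A_0\subseteq L^2(H)$, gives $\binom{0}{1}\in N$ and $\phi=\pi/2$. If instead $q$ is analytic at $0$ with real value $\alpha\DE q(0)$ (the reality coming from the Herglotz symmetry $q(\bar z)=\qu{q(z)}$), choose $\tau^*\in(0,\pi)$ with $\cot\tau^*=-\alpha$; then $0\in\sigma_p(A_{\tau^*})$ with eigenvector $Y_1(\cdot,0)+\alpha Y_2(\cdot,0)\equiv\binom{1}{\alpha}$, again a non-zero constant in $L^2(H)$ and thus in $N$. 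In either case $N\neq\{0\}$, contradicting $N=\{0\}$ and proving the lemma.

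The main obstacle will be the precise setup of the Weyl coefficient $q(z)$ for canonical systems in the limit-point case and the identification of eigenvalues of $A_\tau$ with solutions of $q(\lambda)=-\cot\tau$; both are classical in the inverse spectral theory of canonical systems and can be imported from the standard references already cited in the paper. Once they are invoked, the meromorphy of $q$ at $0$ (a direct consequence of $0\notin\sigma_{\ess}(A_\tau)$ via the Herglotz representation of $q$) together with the two-case analysis above delivers the desired constant direction in $L^2(H)$.
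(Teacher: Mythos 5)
Your argument is correct, and it shares the paper's key observation: at $z=0$ the system \eqref{Q101} reads $y'=0$, so any eigenfunction at $0$ of any selfadjoint realisation is a nonzero constant lying in $L^2(H)$; the whole point is therefore to produce \emph{some} selfadjoint extension of $T_{\min}(H)$ having $0$ as an eigenvalue. Where you differ is in how that extension is produced. The paper does it abstractly in two lines: $0\notin\sigma_{\ess}(\OpA{H})$ makes $0$ a point of regular type for $T_{\min}(H)$, and general extension theory (Gorbachuk--Gorbachuk) then supplies a selfadjoint extension $\tilde A$ with $0\in\sigma_{\rm p}(\tilde A)$, whence $\ker T_{\max}(H)\neq\{0\}$ and this kernel consists exactly of the constants in $L^2(H)$. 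You instead run the whole one-parameter family $A_\tau$ through the Weyl coefficient: invariance of the essential spectrum under the rank-one resolvent perturbation, meromorphy of $q$ at $0$, and the dichotomy ``pole of $q$ at $0$'' versus ``$q(0)=\alpha\in\bb R$'' to exhibit the constant $\binom{0}{1}$ or $\binom{1}{\alpha}$ explicitly. Your route costs more imported machinery (the Weyl solution in the limit point case and the identification of $\sigma_{\rm p}(A_\tau)$ with the zeros of $\cos\tau+q(\lambda)\sin\tau$ in gaps of the essential spectrum), but it buys an explicit identification of the admissible angle $\phi$ in terms of $q(0)$, which the abstract argument does not give; your preliminary remark that $\dim N\le 1$ because $\int_a^b\tr H=\infty$ is a correct bonus not needed for the statement. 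The one step you should make sure is properly justified when writing this up is that the numerator and denominator of the M\"obius transform $q_\tau$ cannot vanish simultaneously (so that a zero of $\cos\tau+q\sin\tau$ really is a pole of $q_\tau$, hence an eigenvalue); this follows since $\tan\tau=-\cot\tau$ has no real solution.
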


\noindent
This follows from \cite[Theorem~3.8(b)]{remling:2018} (use $t=0$); for the convenience of the reader 
we recall the argument. 

\begin{proof}[Proof of \thref{Q174}]
	Since $0\notin\sigma_{\ess}(\OpA{H})$, $0$ is a point of regular type for $T_{\min}(H)$. Thus there 
	exists a selfadjoint extension $\tilde A$ of $T_{\min}(H)$ such that $0\in\sigma_{{\rm p}}(\tilde A)$
	(see, e.g., \cite[Propositions~3.3~and~3.5]{gorbachuk.gorbachuk:1997}), and it follows that 
	$\ker T_{\max}(H)\neq\{0\}$. This kernel, however, consists of all constant functions in $L^2(H)$. 
\end{proof}

\noindent
To achieve the normalisation $\int_a^b h_1(s)\DD ds<\infty$, equivalently, $\phi=0$ in \eqref{Q182}, 
one uses rotation isomorphisms. 

\begin{definition}\thlab{Q175}
	Let $\alpha\in\bb R$, and denote 
	\[
		N_\alpha\DE 
		\begin{pmatrix}
			\cos\alpha & \sin\alpha
			\\
			-\sin\alpha & \cos\alpha
		\end{pmatrix}
		.
	\]
	\begin{Enumerate}
	\item For a Hamiltonian $H$ defined on some interval $[a,b)$, we set 
		\[
			(\Rot{\alpha}{H})(t)\DE N_\alpha H(t)N_\alpha^{-1},\quad t\in[a,b)
			.
		\]
	\item For a $2$-vector valued function defined on some interval $[a,b)$, we set 
		\[
			(\omega_\alpha f)(t)\DE N_\alpha f(t),\quad t\in[a,b)
			.
		\]
	\end{Enumerate}
\end{definition}

\begin{remark}\thlab{Q176}
	The following facts hold (see, e.g., \cite[p.263]{kaltenbaeck.woracek:p5db}):
	\begin{Itemize}
	\item $\Rot{\alpha}{H}$ is a Hamiltonian,
	\item $\omega_\alpha$ induces an isometric isomorphism of $L^2(H)$ onto $L^2(\Rot{\alpha}{H})$,
	\item ${\displaystyle
			T_{\min}(\Rot{\alpha}{H})\circ\omega_\alpha=\omega_\alpha\circ T_{\min}(H)
		}$.
	\end{Itemize}
	Consequently, the Hamiltonians $H$ and $\Rot{\alpha}{H}$ will share all operator theoretic properties. 
\end{remark}

\noindent
In the context of diagonalisation, making a normalising rotation is inevitable. The reason being the following fact. 

\begin{lemma}\thlab{Q177}
	For a Hamiltonian $H$ there exists at most one angle $\alpha$ modulo $\frac\pi 2$ such that 
	$L^2(\Diag\Rot{\alpha}{H})$ contains a nonzero constant. 
\end{lemma}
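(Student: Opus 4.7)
The plan is to unfold the condition into an integrability statement about scalar quadratic forms of $H$, and then combine the standing hypothesis $\int_a^b\tr H(t)\DD dt=\infty$ with the Cauchy--Schwarz inequality to obtain the desired uniqueness.

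First I would compute the diagonal entries of $\Rot{\alpha}{H}$ explicitly. Setting $v_\alpha\DE(\cos\alpha,\sin\alpha)^T$ and $Q_v(t)\DE v^TH(t)v$, one checks that these entries are $Q_{v_\alpha}$ and $Q_{v_{\alpha+\pi/2}}$. Hence a constant $c=(c_1,c_2)^T\neq 0$ lies in $L^2(\Diag\Rot{\alpha}{H})$ precisely when
\[
	\int_a^b\bigl(|c_1|^2Q_{v_\alpha}(t)+|c_2|^2Q_{v_{\alpha+\pi/2}}(t)\bigr)\DD dt<\infty.
\]
Since $H(t)\geq 0$, both quadratic forms are nonnegative, so this integral is finite exactly when each summand whose coefficient is nonzero is individually integrable. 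Because $Q_{v_\alpha}+Q_{v_{\alpha+\pi/2}}=\tr H$ and $\int_a^b\tr H(t)\DD dt=\infty$, the two integrals cannot both be finite. Setting $F(\beta)\DE\int_a^b Q_{v_\beta}(t)\DD dt\in[0,\infty]$, this reduces the problem to counting $\alpha\pmod{\pi/2}$ satisfying
\[
	F(\alpha)<\infty\quad\text{or}\quad F(\alpha+\tfrac\pi 2)<\infty.
\]
Note that $F$ depends only on $\beta$ modulo $\pi$, since $v_{\beta+\pi}=-v_\beta$.

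The core step, which I expect to be the main obstacle, is the claim that the set $\ms F\DE\{\beta\in[0,\pi)\DS F(\beta)<\infty\}$ contains at most one element. Suppose by contradiction that $\beta_1\neq\beta_2$ both lie in $\ms F$. Then $v_{\beta_1},v_{\beta_2}$ are linearly independent and form a basis of $\bb R^2$. For any $w=av_{\beta_1}+bv_{\beta_2}$ one has
\[
	Q_w(t)=a^2Q_{v_{\beta_1}}(t)+2ab\,v_{\beta_1}^TH(t)v_{\beta_2}+b^2Q_{v_{\beta_2}}(t).
\]
The Cauchy--Schwarz inequality for positive semidefinite matrices yields $|v_{\beta_1}^TH(t)v_{\beta_2}|\leq\sqrt{Q_{v_{\beta_1}}(t)Q_{v_{\beta_2}}(t)}$, and the $L^1$ Cauchy--Schwarz inequality then gives $\int_a^b|v_{\beta_1}^TH(t)v_{\beta_2}|\DD dt\leq(F(\beta_1)F(\beta_2))^{1/2}<\infty$. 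Hence $\int_a^b Q_w(t)\DD dt<\infty$ for every $w\in\bb R^2$; taking $w=e_1,e_2$ and summing contradicts $\int_a^b\tr H(t)\DD dt=\infty$.

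To conclude, if $\ms F=\emptyset$ then no $\alpha$ has the property and the lemma holds vacuously. Otherwise $\ms F=\{\beta_0\}$, and the disjunction in the reduced condition becomes ``$\alpha\equiv\beta_0\pmod\pi$ or $\alpha\equiv\beta_0-\pi/2\pmod\pi$'', which together describe the single residue class $\alpha\equiv\beta_0\pmod{\pi/2}$. This yields the desired uniqueness; the only nontrivial ingredient is the pointwise--then--$L^1$ Cauchy--Schwarz bound on the cross term, and once that is in place the bookkeeping on angles is routine.
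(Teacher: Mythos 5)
Your proof is correct and follows essentially the same route as the paper: both reduce the statement to the fact that at most one direction $v_\beta$ modulo $\pi$ can satisfy $\int_a^b v_\beta^TH(t)v_\beta\DD dt<\infty$, using $\int_a^b\tr H(t)\DD dt=\infty$. The only difference is that you prove this uniqueness explicitly via the pointwise and $L^1$ Cauchy--Schwarz bounds on the cross term, whereas the paper simply invokes it as a known consequence of the limit point case; your version is therefore slightly more self-contained but not a different argument.
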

\begin{proof}
	Let $\alpha\in[0,\pi)$, write 
	\[
		\tilde H=\begin{pmatrix} \tilde h_1 & \tilde h_3 \\ \tilde h_3 & \tilde h_2 \end{pmatrix}
		\DE\Rot{\alpha}H
		,
	\]
	and assume that $\phi\in\bb R$ with
	\[
		\xi_\phi\DE\binom{\cos\phi}{\sin\phi}\in L^2(\Diag\tilde H)
		.
	\]
	Since
	\[
		\xi_\phi^*\big(\Diag\tilde H\big)\xi_\phi=\tilde h_1\cos^2\phi+\tilde h_2\sin^2\phi
		,
	\]
	and since $H$ (and with it $\tilde H$ and $\Diag\tilde H$) is in limit point case, we conclude that 
	$\phi\in\{0,\frac\pi 2\}$. Thus, either $\binom 10$ or $\binom 01$ belongs to $L^2(\Diag\tilde H)$, 
	and hence to $L^2(\tilde H)$. From this we obtain that either $\xi_\alpha$ or $\xi_{\alpha+\frac\pi 2}$ belongs to $L^2(H)$. 
	There exists at most one angle $\phi$ modulo $\pi$ such that $\xi_\phi\in L^2(H)$, and therefore $\alpha$ 
	is uniquely determined modulo $\frac\pi2$.
\end{proof}

\subsection{Discussion of \thref{Q109,Q132}}

For Hamiltonians of a particularly simple form the conditions given in \thref{Q102,Q104,Q106} can be evaluated. 
We consider Hamiltonians which are defined on the interval $(0,1)$, 
where $h_1(t)=1$ a.e., and where $h_2(t)$ grows sufficiently regularly towards the singular endpoint $1$ in the 
following sense.

\begin{definition}\thlab{Q178}
	\hfill
	\begin{Enumerate}
	\item A function $\psi\DF [1,\infty)\to(0,\infty)$ is called regularly varying with index $\rho_\psi\in\bb R$, if 
		it is measurable and 
		\[
			\forall k>0\DP \lim_{x\to\infty}\frac{\psi(kx)}{\psi(x)}=k^{\rho_\psi}
			.
		\]
	\item We call a function $\varphi\DF [0,1)\to(0,\infty)$ regularly varying at $1$ with index $\rho\in\bb R$, if the function 
		\[
			\psi(x)\DE\varphi\Big(\frac{x-1}x\Big)\DF [1,\infty)\to(0,\infty)
		\]
		is regularly varying with index $\rho$.
	\end{Enumerate}
\end{definition}

\begin{lemma}\thlab{Q179}
	Let $\varphi\DF[0,1)\to(0,\infty)$ be continuous and regularly varying at $1$ with index $\rho>0$, 
	and set $\kappa(t)\DE 1$, $t\in[0,1)$. 
	Then the numbers $\omega_n$ constructed in \eqref{Q121} satisfy\footnote{%
		We write $\alpha_n\asymp\beta_n$, if there exist $c_1,c_2>0$ such that 
		$c_1\alpha_n\leq\beta_n\leq c_2\alpha_n$, $n\in\bb N$. 
		}
	\[
		\omega_n\asymp 2^{-n}\varphi\big(1-2^{-n}\big)
		.
	\]
\end{lemma}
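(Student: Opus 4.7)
The approach is a direct change of variables combined with Karamata's uniform convergence theorem for regularly varying functions. Since $\kappa\equiv 1$ on $[0,1)$, we have $\|\kappa\|^2=1$, and the defining requirement $\|\mathds{1}_{(c_n,1)}\kappa\|^2=2^{-n}$ from \eqref{Q124} forces $c_n=1-2^{-n}$. Therefore $J_n=(1-2^{1-n},1-2^{-n})$, $\|\mathds{1}_{J_n}\kappa\|=2^{-n/2}$, and the assertion $\omega_n\asymp 2^{-n}\varphi(1-2^{-n})$ reduces to the two-sided estimate
\[
\int_{1-2^{1-n}}^{1-2^{-n}}\varphi(s)^2\DD ds\asymp 2^{-n}\varphi(1-2^{-n})^2.
\]

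The next step is a change of variables $s=1-1/x$, which gives $ds=x^{-2}\DD dx$ and maps $J_n$ bijectively onto $[2^{n-1},2^n]$. Writing $\psi(x)\DE\varphi(1-1/x)$, which is regularly varying at $\infty$ with index $\rho$ by \thref{Q178}, and then scaling $x=2^n y$ with $y\in[\frac 12,1]$, the integral becomes
\[
\int_{2^{n-1}}^{2^n}\psi(x)^2 x^{-2}\DD dx = 2^{-n}\psi(2^n)^2\int_{1/2}^{1}\bigg(\frac{\psi(2^n y)}{\psi(2^n)}\bigg)^{\!\!2} y^{-2}\DD dy.
\]
Since $\psi(2^n)=\varphi(1-2^{-n})$, it remains to show that the last integral is bounded above and below by positive constants independent of $n$.

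For this I would invoke Karamata's uniform convergence theorem (cf.\ \cite[Theorem~1.2.1]{bingham.goldie.teugels:1989}): for $\psi$ regularly varying of index $\rho$ one has $\psi(2^n y)/\psi(2^n)\to y^\rho$ uniformly for $y\in[\frac 12,1]$ as $n\to\infty$. Consequently the integrands converge uniformly to $y^{2\rho-2}$, whose integral $\int_{1/2}^1 y^{2\rho-2}\DD dy$ is a finite positive number for every $\rho>0$. Thus the integrals are bounded between positive constants for all sufficiently large $n$; for the remaining finitely many values, continuity and strict positivity of $\varphi$ on every compact subinterval of $[0,1)$ give individual comparisons that can be absorbed into the implicit constants. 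No serious obstacle is anticipated here, since the only nontrivial ingredient is the uniform convergence theorem, and the rest is an explicit substitution.
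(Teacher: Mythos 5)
Your proof is correct and follows essentially the same route as the paper: both reduce the claim to the Uniform Convergence Theorem for the regularly varying function $\psi(x)=\varphi(1-\frac 1x)$ on the window $[\frac 12,1]$. The only (cosmetic) difference is that the paper first applies the integral mean value theorem to write $\omega_n=2^{-n}\varphi(t_n)$ with $t_n\in J_n$ and then compares $\varphi(t_n)$ with $\varphi(c_n)$, whereas you keep the integral and pass to the limit under the integral sign.
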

\begin{proof}
	We have $\|\mathds{1}_{(t,b)}\kappa\|^2=b-t$, and hence the sequence $(c_n)_{n=0}^\infty$ is given as 
	\[
		c_n=1-2^{-n},\quad n=0,1,2,\ldots
		.
	\]
	Since $\varphi$ is continuous, we find $t_n\in J_n$ with 
	\[
		\omega_n=2^{-\frac n2}\Big(\int_{J_n}|\varphi(s)|^2\DD ds\Big)^{\frac 12}
		=2^{-\frac n2}(c_n-c_{n-1})^{\frac 12}\varphi(t_n)=2^{-n}\varphi(t_n)
		.
	\]
	Set 
	\[
		\psi(x)\DE\varphi\Big(\frac{x-1}x\Big),\quad x_n\DE\frac 1{1-c_n},\ y_n\DE\frac 1{1-t_n}
		.
	\]
	Since $\frac{x_{n-1}}{x_n}=\frac 12$, we have $y_n=k_nx_n$ with $k_n\in[\frac 12,1]$. 
	From the Uniform Convergence Theorem, see e.g.\ \cite[Theorem~1.5.2]{bingham.goldie.teugels:1989}, 
	we obtain that 
	\[
		\Big(\frac 14\Big)^\rho\leq\frac{\psi(y_n)}{\psi(x_n)}\leq\Big(\frac 54\Big)^\rho
		,\quad n\text{ sufficiently large}
		.
	\]
	Passing back to $\varphi,c_n,t_n$, this yields $\varphi(t_n)\asymp\varphi(c_n)$.
\end{proof}

\begin{proof}[Proof of \thref{Q109}]
	We apply \thref{Q179} with the function $\varphi(t)\DE\sqrt{h_2(t)}$. This is justified, since the corresponding function 
	$\psi$ is
	\[
		\psi(x)=x^{\frac\alpha 2}(1+\log x)^{-\frac{\alpha_1}2}(1+\log^+\log x)^{-\frac{\alpha_2}2}
		,
	\]
	and hence is regularly varying with index $\frac\alpha 2$. 
	Therefore the numbers $\omega_n$ which decide about the behaviour of the operator $S_{21}$ satisfy 
	\begin{align*}
		\omega_n\asymp &\, 2^{-n}\varphi(1-2^{-n})
		\\
		= &\, 2^{-n}\cdot 2^{n\frac\alpha 2}(1+\log 2^n)^{-\frac{\alpha_1}2}(1+\log^+\log 2^n)^{-\frac{\alpha_2}2}
		\\
		\asymp &\, 2^{n(\frac\alpha 2-1)}n^{-\frac{\alpha_1}2}(\log n)^{-\frac{\alpha_2}2}
		.
	\end{align*}
	Using this relation, the stated spectral properties of $H_{\alpha;\alpha_1,\alpha_2}$ follow immediately from the
	sequential characterisations given in \eqref{Q159}, \eqref{Q127}, \eqref{Q157}, and \eqref{Q181}.
	Let us go through the cases.
	\begin{Itemize}
	\item First of all the Krein-de~Branges formula implies
		\begin{equation}\label{Q180}
			\liminf_{n\to\infty}\frac n{|\lambda_n|}\geq\int_0^1\sqrt{h_2(s)}\DD ds>0
			.
		\end{equation}
	\item If $(\alpha>2)$ or $(\alpha=2,\alpha_1<0)$ or $(\alpha=2,\alpha_1=0,\alpha_2<0)$, then 
		$\lim_{n\to\infty}\omega_n=\infty$, and hence $0$ belongs to the essential spectrum. 
	\item If $(\alpha=2,\alpha_1=\alpha_2=0)$, then $\omega_n\asymp 1$, and hence the spectrum is not discrete, 
		but bounded invertibility takes place. 
	\item If $(\alpha<2)$ or $(\alpha=2,\alpha_1>0)$ or $(\alpha=2,\alpha_1=0,\alpha_2>0)$, then 
		$\lim_{n\to\infty}\omega_n=0$, and hence the spectrum is discrete. 
	\item If $(\alpha=2,\alpha_1>0)$, then the convergence exponent of $(\omega_n)_{n=1}^\infty$ equals 
		$\frac 2{\alpha_1}$, while in the case $(\alpha=2,\alpha_1=0,\alpha_2>0)$, 
		the convergence exponent of $(\omega_n)_{n=1}^\infty$ is infinite. 
		From this and \eqref{Q180} it follows that (for $\alpha=2$)
		\[
			\text{conv.exp.\ of }(|\lambda_n|)_{n=1}^\infty=
			\begin{cases}
				\infty &\hspace*{-3mm},\quad \alpha_1=0,\alpha_2>0,
				\\
				\frac 2{\alpha_1} &\hspace*{-3mm},\quad \alpha_1\in(0,2),\alpha_2\in\bb R,
				\\
				1 &\hspace*{-3mm},\quad \alpha_1\geq 2,\alpha_2\in\bb R.
			\end{cases}
		\]
	\item If $(\alpha=2,\alpha_1\in(0,2),\alpha_2\in\bb R)$ and $\ms g(r)\DE r^{\frac 2{\alpha_1}}(\log r)^\gamma$, then 
		\begin{align*}
			\ms g\big(\frac 1{\omega_n}\big)\asymp &\, \ms g\big(n^{\frac{\alpha_1}2}(\log n)^{\frac{\alpha_2}2}\big)
			\\
			= &\, \big[n^{\frac{\alpha_1}2}(\log n)^{\frac{\alpha_2}2}\big]^{\frac 2{\alpha_1}}
			\big[\log\big(n^{\frac{\alpha_1}2}(\log n)^{\frac{\alpha_2}2}\big)\big]^\gamma
			\\
			\asymp &\, n(\log n)^{\frac{\alpha_2}{\alpha_1}+\gamma}
			.
		\end{align*}
		This shows that for $\gamma=-\frac{\alpha_2}{\alpha_1}$ we have $n\cdot\ms g(\frac 1{\omega_n})^{-1}\asymp 1$. 
		Since the sequence $(\omega_n)_{n=1}^\infty$ is comparable to a monotone sequence, it follows that 
		\[
			0<\limsup_{n\to\infty}\frac n{\ms g((\omega_n^*)^{-1})}<\infty
			.
		\]
	\end{Itemize}
\end{proof}

\begin{proof}[Proof of \thref{Q132}]
	With a simple trick properties of $\mr H_{\alpha_1,\alpha_2}$ can be obtained from \thref{Q109}. 
	To explain this, we start in the reverse direction. Consider the Hamiltonian $H_{2;\alpha_1,\alpha_2}$, and set 
	\[
		\mr H_{2;\alpha_1,\alpha_2}(t)\DE
		\begin{pmatrix}
			1 & -m(t)
			\\
			-m(t) & m(t)^2
		\end{pmatrix}
		,\quad t\in[0,1)
		,
	\]
	where 
	\[
		m(t)\DE\int_0^t h_2(s)\DD ds
		.
	\]
	Moreover, let $q$ be the Weyl-coefficient of $H_{2;\alpha_1,\alpha_2}$ and $\mr q$ the one of 
	$\mr H_{2;\alpha_1,\alpha_2}$. Then, by \cite[Theorem~4.2]{kaltenbaeck.winkler.woracek:bimmel}, we have 
	\[
		q(z)=\frac 1z\mr q(z^2)
		.
	\]
	Thus the spectra of $\OpA{\mr H_{2;\alpha_1,\alpha_2}}$ and $\OpA{H_{2;\alpha_1,\alpha_2}}$ are together 
	discrete or not. If these spectra are discrete, then the convergence exponent of $\sigma(\OpA{H_{2;\alpha_1,\alpha_2}})$ 
	is twice the convergence exponent of $\sigma(\OpA{\mr H_{2;\alpha_1,\alpha_2}})$. This yields 
	\[
		\text{conv.exp.\ of }\sigma(\OpA{\mr H_{2;\alpha_1,\alpha_2}})=
		\begin{cases}
			\frac 1{\alpha_1} &\hspace*{-3mm},\quad \alpha_1\in(0,2)
			,
			\\
			\frac 12 &\hspace*{-3mm},\quad \alpha_1\geq 2
			.
		\end{cases}
	\]
	Integrating by parts gives 
	\[
		\lim_{t\nearrow 1}\frac{m(t)}{h_2(t)(1-t)}=1
		.
	\]
	The function $(h_2(t)(1-t))^2$ is again of the form \eqref{Q133} with $\alpha=2$, but with the parameters 
	$2\alpha_1$ and $2\alpha_2$ instead of $\alpha_1$ and $\alpha_2$. Thus the spectrum of $\OpA{\mr H_{\alpha_1,\alpha_2}}$ 
	has the same asymptotic behaviour as the spectrum of $\OpA{\mr H_{2;\frac{\alpha_1}2,\frac{\alpha_2}2}}$.
\end{proof}

%
%
\addtocontents{toc}{\contentsline{section}{{\bf Appendix}}{}}
\appendix
\makeatletter
\DeclareRobustCommand{\@seccntformat}[1]{%
  \def\temp@@a{#1}%
  \def\temp@@b{section}%
  \ifx\temp@@a\temp@@b
  Appendix\ \csname the#1\endcsname.\quad%
  \else
  \csname the#1\endcsname\quad%
  \fi
} 
\makeatother
\renewcommand{\thelemma}{\Alph{section}.\arabic{lemma}}
%
%

%
%
%
\section{Proof of AJPR-type theorems}

In this appendix we give a detailed proof of \thref{Q120,Q152,Q170}. 

Recall the relevant notation. 
Given a finite or infinite interval $(a,b)$, and measurable functions $\kappa,\varphi\DF(a,b)\to\bb C$
with $\kappa\in L^2(a,b)$ and $\mathds{1}_{(a,c)}\varphi\in L^2(a,b)$, $c\in(a,b)$, 
we consider the possibly unbounded integral operator $T$ on $L^2(a,b)$ acting as 
\[
	(Tf)(t)\DE \varphi(t)\int_t^bf(s)\qu{\kappa(s)}\DD ds,\quad t\in(a,b)
	,
\]
on its natural maximal domain.
Let $c_0\DE a<c_1<c_2<\ldots<b$ is a sequence with 
\begin{equation}\label{Q190}
	\|\mathds{1}_{(c_{n-1},c_n)}\kappa\|^2=\Big(\frac 12\Big)^n\|\kappa\|^2,\quad n\in\bb N
	,
\end{equation}
and we denote
\[
	J_n\DE(c_{n-1},c_n),\qquad\omega_n\DE \|\mathds{1}_{J_n}\kappa\|\cdot\|\mathds{1}_{J_n}\varphi\|
	,\ n\in\bb N
	.
\]
Moreover, let $\OI$ be an operator ideal (remember that we do not distinguish notationally between an 
operator ideal and its sequence space).
The task is to prove the implications in the triangle 
\begin{equation}\label{Q191}
\begin{tikzcd}[column sep=nixx]
	T\in\OI \arrow[Rightarrow]{rr}{\text{trivial}}
	& \rule{0pt}{0pt} & \Re T\in\OI \arrow[Rightarrow]{ld}\\
	& (\omega_n)_{n=1}^\infty\in\OI \arrow[Rightarrow]{lu} &
\end{tikzcd}
\end{equation}
in the following situations.
\begin{Enumerate}
\item For \thref{Q170}: $\OI=\ell^\infty$.
\item For \thref{Q120}: $\OI=c_0$.
\item For \thref{Q152}: $\OI\subsetneq\SI_\infty$, 
	assuming that $\OI$ is fully symmetric for the downwards implication, 
	and assuming that $\OI$ is symmetrically normed and Matsaev's Theorem holds in $\OI$ for the upwards implication. 
\end{Enumerate}

\begin{proof}[Proof of ``\,$\Re T\in\OI\Rightarrow(\omega_n)_{n=1}^\infty\in\OI$'']
\hfill\\
	Let $\OI$ be $\ell^\infty$, $c_0$, or a fully symmetric operator ideal, and assume that $\Re T\in\OI$. 
	Moreover, denote by $P_n$ the orthogonal projection $P_nf\DE\mathds{1}_{J_n}f$ of $L^2(a,b)$ onto its subspace $L^2(J_n)$. 

	Since $\OI$ is of one of the stated forms, we obtain that $\sum_{n=1}^\infty P_n(\Re T)P_{n+1}\in\OI$. 
	For $\OI=\ell^\infty$ or $\OI=c_0$, this is obvious. For $\OI$ being fully symmetric, it is a consequence 
	of \cite[Theorem~II.5.1]{gohberg.krein:1965}\/\footnote{%
		This is actually a variant of \cite[Theorem~II.5.1]{gohberg.krein:1965} which is easy to obtain in the present 
		situation since all spaces $L^2(J_n)$ have the same Hilbert space dimension. Choose unitary 
		operators $U_n\DF L^2(J_n)\to L^2(J_{n+1})$, let $S\DF L^2(a,b)\to L^2(a,b)$ be the block shift 
		\[
			S\DE
			\left(
			\begin{array}{ccccc}
				0 & & & & \\[3pt]
				U_1 & 0 & & & \\[3pt]
				& U_2 & 0 & & \\[1pt]
				& & \ddots & \ddots &
			\end{array}
			\right)
			\DF
			\begin{pmatrix} L^2(J_1)\\ \oplus\\ L^2(J_2)\\ \oplus\\ \vdots\end{pmatrix}
			\to
			\begin{pmatrix} L^2(J_1)\\ \oplus\\ L^2(J_2)\\ \oplus\\ \vdots\end{pmatrix}
			,
		\]
		and apply \cite[Theorem~II.5.1]{gohberg.krein:1965} to the operator $(\Re T)S$.
	}.

	Clearly, $P_nTP_{n+1}=(\textvisiblespace\,,\mathds{1}_{J_{n+1}}\kappa)\mathds{1}_{J_n}\varphi$. 
	The adjoint of $T$ is the integral operator with kernel 
	\[
		\parbox{\textwidth}{${\displaystyle\mkern50mu
		T^*:\quad \mathds{1}_{s<t}(t,s)\kappa(t)\qu{\varphi(s)}
		}$}
	\]
	and hence $P_nT^*P_{n+1}=0$. Together, 
	\[
		\sum_{n=1}^\infty P_n(\Re T)P_{n+1}=\frac 12\sum_{n=1}^\infty 2^{-1/2}\omega_n\cdot
		\Big(\textvisiblespace\,,\frac{\mathds{1}_{J_{n+1}}\kappa}{\|\mathds{1}_{J_{n+1}}\kappa\|}\Big)
		\frac{\mathds{1}_{J_n}\varphi}{\|\mathds{1}_{J_n}\varphi\|}
		,
	\]
	and hence $a_n\big(\sum_{n=1}^\infty P_n(\Re T)P_{n+1}\big)=2^{-\frac 12}\omega_n^*$. 
	We see that $(\omega_n)_{n=1}^\infty\in\OI$. 
\end{proof}

\noindent
The upwards implication in the triangle \eqref{Q191} is a bit more involved.

\begin{proof}[Proof of ``\,$(\omega_n)_{n=1}^\infty\in\OI\Rightarrow T\in\OI$'']
\hfill\\
	Let $\OI$ be $\ell^\infty$, $c_0$, or a symmetrically normed ideal in which Matsaev's Theorem holds, and 
	assume that $(\omega_n)_{n=1}^\infty\in\OI$. 
	Note that in every case $(\omega_n)_{n=1}^\infty$ is bounded. 
\begin{Steps}
\item 
	The crucial point is to handle the diagonal cell sum $\sum_{n=1}^\infty P_nTP_n$.
	Our aim is to show that this series converges to an operator in $\OI$. 

	The summand $P_nTP_n$ is the integral operator in $L^2(a,b)$ with kernel 
	\[
		\parbox{\textwidth}{${\displaystyle\mkern50mu
		P_nTP_n:\quad \mathds{1}_{t<s}(t,s)\mathds{1}_{J_n}(t)\mathds{1}_{J_n}(s)\varphi(t)\qu{\kappa(s)}
		}$}
	\]
	Since $\mathds{1}_{J_n}\kappa,\mathds{1}_{J_n}\varphi\in L^2(a,b)$, it is compact and 
	\begin{align*}
		\|P_nTP_n\|\leq &\, \Big(
		\int_a^b\int_a^b|\mathds{1}_{t<s}(t,s)\mathds{1}_{J_n}(t)\mathds{1}_{J_n}(s)\varphi(t)\qu{\kappa(s)}|^2\DD ds\DD dt
		\Big)^{\frac 12}
		\\
		\leq &\, \|\mathds{1}_{J_n}\kappa\|\|\mathds{1}_{J_n}\varphi\|=\omega_n
		.
	\end{align*}
	The sequence $(\omega_n)_{n=1}^\infty$ is bounded, and hence the series $\sum_{n=1}^\infty P_nTP_n$ converges strongly, 
	and its sum is a bounded operator with 
	\[
		\Big\|\sum_{n=1}^\infty P_nTP_n\Big\|\leq\|(\omega_n)_{n=1}^\infty\|_\infty
		.
	\]
	This settles the case that $\OI=\ell^\infty$. 
	If $\lim_{n\to\infty}\omega_n=0$, the series converges w.r.t.\ the operator norm and hence its sum is a compact operator. 
	This settles the case that $\OI=c_0$. 

	Consider the remaining case. Then, in particular, $\lim_{n\to\infty}\omega_n=0$. 
	Let $Q_0$ be the compact operator given by the Schmidt-series 
	\[
		Q_0\DE\sum_{n=1}^\infty\omega_n\cdot
		\Big(\textvisiblespace\,,\frac{\mathds{1}_{J_n}\kappa}{\|\mathds{1}_{J_n}\kappa\|}\Big)
		\frac{\mathds{1}_{J_n}\varphi}{\|\mathds{1}_{J_n}\varphi\|}
		.
	\]
	Then $a_n(Q_0)=\omega_n^*$, and hence $Q_0\in\OI$. 
	Since Matsaev's Theorem holds in $\OI$, the triangular truncation transformator $\mc C$, cf.\ \cite{gohberg.krein:1967}, 
	is defined on all of $\OI$ and maps $\OI$ boundedly into itself. Thus $\mc CQ_0\in\OI$, and 
	\[
		\mc CQ_0=\sum_{n=1}^\infty\mc C\big((\textvisiblespace\,,\mathds{1}_{J_n}\kappa)\mathds{1}_{J_n}\varphi\big)
		.
	\]
	However, $\mc C\big((\textvisiblespace\,,\mathds{1}_{J_n}\kappa)\mathds{1}_{J_n}\varphi\big)=P_nTP_n$. 
	Thus we have $\sum_{n=1}^\infty P_nTP_n\in\OI$. 
\item 
	The rest of the proof merely uses completeness.
	For $l\in\bb N$ let $Q_l$ be the compact operator given by the Schmidt-series 
	\[
		Q_l\DE\sum_{n=1}^\infty 2^{-\frac l2}\omega_n\cdot
		\Big(\textvisiblespace\,,\frac{\mathds{1}_{J_{n+l}}\kappa}{\|\mathds{1}_{J_{n+l}}\kappa\|}\Big)
		\frac{\mathds{1}_{J_n}\varphi}{\|\mathds{1}_{J_n}\varphi\|}
		.
	\]
	Then $Q_l\in\OI$ and $\|Q_l\|_{\OI}=2^{-l/2}\|(\omega_n)_{n=1}^\infty\|_{\OI}$. Hence, the 
	series $\sum_{l=1}^\infty Q_l$ converges w.r.t.\ $\|\textvisiblespace\|_{\OI}$ and its sum belongs to $\OI$. 

	A short computation using \eqref{Q190} shows that 
	\[
		Tf=\Big(\sum_{n=1}^\infty P_nTP_n\Big)f+\Big(\sum_{l=1}^\infty Q_l\Big)f,\quad 
		f\in L^2(a,b),\sup\supp f<b
		.
	\]
	Since $T$ is closed, it follows that $T=\sum_{n=1}^\infty P_nTP_n+\sum_{l=1}^\infty Q_l$, and we 
	conclude that $T\in\OI$. 
\end{Steps}
\end{proof}

\section{Sequential vs.\ continuous conditions}

In this section we give detailed proofs of \thref{Q122,Q155,Q171}. 
Recall the relevant notation:
We are given a finite or infinite interval $(a,b)$, and measurable functions $\kappa,\varphi\DF(a,b)\to\bb C$
with $\kappa\in L^2(a,b)$ and $\mathds{1}_{(a,c)}\varphi\in L^2(a,b)$, $c\in(a,b)$. 
Further, $c_0\DE a<c_1<c_2<\ldots<b$ is a sequence with 
\[
	\|\mathds{1}_{(c_n,b)}\kappa\|^2=\Big(\frac 12\Big)^n\|\kappa\|^2\text{, equivalently, }
	\|\mathds{1}_{(c_{n-1},c_n)}\kappa\|^2=\Big(\frac 12\Big)^n\|\kappa\|^2
	,
\]
and $J_n\DE(c_{n-1},c_n)$ and $\omega_n\DE \|\mathds{1}_{J_n}\kappa\|\cdot\|\mathds{1}_{J_n}\varphi\|$. 
Moreover, denote 
\[
	\Omega(t)\DE\|\mathds{1}_{(t,b)}\kappa\|\cdot\|\mathds{1}_{(a,t)}\varphi\|,\quad t\in(a,b)
	.
\]
The proof of \thref{Q122} and \thref{Q171} is simple. 

\begin{proof}[Proof of \thref{Q122,Q171}]
	A sequence $(\alpha_n)_{n=1}^\infty$ of nonnegative numbers is bounded (tends to $0$), if and only if the 
	sequence $\big(2^{-n}\sum_{k=1}^n2^k\alpha_k\big)_{n=1}^\infty$ is bounded (tends to $0$, respectively). 
	Applying this with 
	\[
		\alpha_n\DE 2^{-n}\int_{c_{n-1}}^{c_n}|\varphi(s)|^2\DD ds,\quad n\in\bb N
		,
	\]
	yields the assertion.
\end{proof}

\noindent
The proof of \thref{Q155} is based on dualising and requires some technique for Orlicz ideals. 
We start with one preparatory lemma.

Let a growth function $\ms g$ with $\rho_{\ms g}>1$ be given. Passing to an equivalent growth function and modifying 
$\ms g$ on a finite interval does not change the truth value of the left side of the asserted equivalence. 
Hence, we may assume w.l.o.g.\ that the function $M(t)\DE\ms g(t^{-1})^{-1}$ has all properties required in 
\thref{Q154}, cf.\ \thref{Q156}, and additionally that $M(t)\asymp t^{\rho_\ms g}$, $t\geq 1$. Note that, since $\rho_\ms g>1$, 
\[
	\lim_{t\searrow 0}\frac{M(t)}t=\lim_{r\to\infty}\frac r{\ms g(r)}=0,\quad 
	\lim_{t\to\infty}\frac{M(t)}t=\infty
	.
\]
Using the language of \cite[Chapter~8,p.47]{maligranda:1989} this means that $M$ belongs to the class $N$. 

\begin{lemma}\thlab{Q192}
	Set $I\DE\bb N$ and let $q\in(0,1)$. 
	\begin{Enumerate}
	\item Set $J\DE\{(n,k)\in I\times I\DS k\leq n\}$ and let $\Orl{M}$ denote the Orlicz space of sequences 
		indexed by $I$ or by $J$ depending on the context. 
		For a sequence $(\alpha_n)_{n\in I}$ define sequences $(\beta_{(n,k)})_{(n,k)\in J}$ and 
		$(\beta_{(n,k)}')_{(n,k)\in J}$ as 
		\[
			\beta_{n,k}\DE\alpha_nq^{n-k},\ \beta_{(n,k)}'\DE\alpha_kq^{n-k},\quad (n,k)\in J
			.
		\]
		If $(\alpha_n)_{n\in I}\in\Orl{M}$, then $(\beta_{(n,k)})_{(n,k)\in J},(\beta_{(n,k)}')_{(n,k)\in J}\in\Orl{M}$. 
		There exists a constant $C_1>0$ such that 
		\begin{multline*}
			\max\big\{\|(\beta_{(n,k)})_{(n,k)\in J}\|_{\Orl{M}},\|(\beta_{(n,k)}')_{(n,k)\in J}\|_{\Orl{M}}\big\}
			\\
			\leq C_1\|(\alpha_n)_{n\in I}\|_{\Orl{M}},\quad (\alpha_n)_{n\in I}\in\Orl{M}
			.
		\end{multline*}
	\item Consider a sequence $(\alpha_n)_{n\in I}$ with $(q^n\alpha_n)_{n\in I}\in\Orl{M}$, and 
		define a sequence $(\beta_n)_{n\in I}$ as
		\[
			\beta_n\DE\sum_{\substack{k\in I\\ k\leq n}}\alpha_k,\quad n\in I
			.
		\]
		Then $(q^n\beta_n)_{n\in I}\in\Orl{M}$. There exists a constant $C_2>0$ such that 
		\[
			\|(q^n\beta_n)_{n\in I}\|_{\Orl{M}}\leq C_2\|(q^n\alpha_n)_{n\in I}\|_{\Orl{M}}
			,\quad (q^n\alpha_n)_{n\in I}\in\Orl{M}
			.
		\]
	\end{Enumerate}
\end{lemma}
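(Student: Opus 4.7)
The plan is to handle both parts with a single mechanism: the factor $q^{n-k}$ produces geometric decay, which combined with convexity of $M$ (and $M(0)=0$) yields a factor $1-q$ that compensates the geometric series $\sum_{j\geq 0}q^j=1/(1-q)$ picked up from the extra index of summation. I expect $C_1=C_2=1/(1-q)$ to work.

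For (i), I would set $\lambda_0\DE\|(\alpha_n)_{n\in I}\|_{\Orl{M}}$, so that $\sum_n M(|\alpha_n|/\lambda_0)\leq 1$, and then put $\lambda\DE\lambda_0/(1-q)$. The elementary inequality $M(tx)\leq t\,M(x)$ for $t\in[0,1]$ (immediate from convexity of $M$ together with $M(0)=0$) is invoked twice: once with $t=q^{n-k}\leq 1$ to reduce $M(|\beta_{n,k}|/\lambda)$ to $q^{n-k}M(|\alpha_n|/\lambda)$, and once with $t=1-q$ to reduce $M(|\alpha_n|/\lambda)$ to $(1-q)M(|\alpha_n|/\lambda_0)$. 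Summing over $(n,k)\in J$ and using $\sum_{k=1}^n q^{n-k}\leq 1/(1-q)$ collapses everything to the assumed bound $\sum_n M(|\alpha_n|/\lambda_0)\leq 1$, which gives the required Orlicz-norm estimate. The twin sequence $(\beta'_{n,k})$ is handled identically after swapping the order of summation.

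For (ii), I would substitute $\gamma_k\DE q^k\alpha_k$, rewriting $q^n\beta_n$ as the triangular sum $\sum_{k=1}^n q^{n-k}\gamma_k$; the task becomes bounding the Orlicz norm of this sequence by $\|(\gamma_k)\|_{\Orl{M}}=\|(q^k\alpha_k)\|_{\Orl{M}}$. The weights $w_k\DE(1-q)q^{n-k}$, $k=1,\ldots,n$, satisfy $\sum_{k=1}^n w_k=1-q^n\leq 1$, so the $\sum t_i\leq 1$ version of Jensen's inequality for convex $M$ with $M(0)=0$ (obtained by enlarging the weight family with a single dummy zero term) applies and yields $M\bigl((1-q)q^n|\beta_n|/\mu_0\bigr)\leq\sum_{k=1}^n w_kM(|\gamma_k|/\mu_0)$, where $\mu_0\DE\|(\gamma_k)\|_{\Orl{M}}$. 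Summing over $n$ and exchanging the order of summation collapses the right-hand side to $\sum_k M(|\gamma_k|/\mu_0)\leq 1$, which delivers the norm bound with $C_2=1/(1-q)$.

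Neither part poses a serious obstacle; structurally the two proofs are the same. The one mild technical point worth flagging is that the infimum in the definition of the Orlicz norm need not be attained, so strictly speaking one should work with $\lambda_0+\varepsilon$ and $\mu_0+\varepsilon$ and let $\varepsilon\to 0$ at the end. Given the $\Delta_2$-assumption made on $M$, this approximation step is entirely harmless.
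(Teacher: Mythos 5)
Your proof is correct, and in part (ii) it takes a genuinely different route from the paper. For part (i) the two arguments are essentially the same: the paper bounds $M(|\alpha_n|q^{n-k})$ by $Cq^{n-k}M(|\alpha_n|)$ using a sub-multiplicativity constant $C$ extracted from regular variation, whereas you use the convexity inequality $M(tx)\le tM(x)$ for $t\in[0,1]$ (which in fact shows that the paper's $C$ equals $1$); your bookkeeping with $\lambda=\lambda_0/(1-q)$ gives the same conclusion with the explicit constant $C_1=1/(1-q)$. Part (ii) is where you diverge. The paper dualises: it writes $\sum_n\sigma_nq^n\beta_n$ as a sum over $J$, splits $q^{n-k}$ into two factors $(\sqrt q)^{n-k}$, applies the Orlicz--H\"older inequality with the complementary function $M^*$, invokes part (i) for both $M$ and $M^*$, controls the constant for $M^*$ via the Orlicz indices (this is where $\rho_{\ms g}>1$ enters their argument), and finally converts between the Amemiya and Luxemburg norms. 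You instead observe that $(q^n\beta_n)_n$ is the convolution of $(q^k\alpha_k)_k$ with the $\ell^1$-kernel $(q^j)_{j\ge0}$ and apply Jensen's inequality with the sub-probability weights $(1-q)q^{n-k}$, $k\le n$, padded by a dummy zero term. This is shorter, yields the explicit constant $C_2=1/(1-q)$, and uses only convexity, monotonicity and $M(0)=0$; in particular it needs neither the complementary function nor the index condition at this point. Your closing remark about working with $\lambda_0+\varepsilon$ is appropriate, though for the Luxemburg norm the modular at the norm itself is automatically at most $1$ by continuity of $M$ and Fatou, so the limiting step can also be dispensed with.
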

\begin{proof}
	For the proof of item \Enumref{1} let $(\alpha_n)_{n\in I}\in\Orl{M}$ with 
	$\|(\alpha_n)_{n\in I}\|_{\Orl{M}}\leq 1$ be given. 
	Then, in particular, $|\alpha_n|\leq 1$, $n\in I$. 
	Since $\rho_\ms g>1$, we have
	\[
		C\DE\sup_{0<\gamma,t\leq 1}\frac{M(\gamma t)}{\gamma M(t)}<\infty
		,
	\]
	cf.\ \cite[Proposition~7.4.1,Theorem~1.5.6]{bingham.goldie.teugels:1989}. Note that $C\geq 1$. Thus we can estimate 
	\begin{align*}
		\sum_{(n,k)\in J}M(|\beta_{n,k}|)= &\, \sum_{(n,k)\in J}M(|\alpha_n|q^{n-k})
		\leq \sum_{(n,k)\in J} Cq^{n-k}M(|\alpha_n|)
		\\
		= &\, C\sum_{n\in I}\Big(\underbrace{\sum_{\substack{k\in I\\ k\leq n}}q^{n-k}}_{\leq\frac 1{1-q}}\Big)M(|\alpha_n|)
		\leq \frac C{1-q}\sum_{n\in I}M(|\alpha_n|)\leq\frac C{1-q}
		.
	\end{align*}
	Since $|\beta_{n,k}|\leq|\alpha_n|\leq 1$ and $\frac{C^2}{1-q}\geq 1$, it follows that 
	\[
		\sum_{(n,k)\in J}M\Big(\frac{|\beta_{n,k}|}{C^2/(1-q)}\Big)\leq\sum_{(n,k)\in J} C\frac{1-q}{C^2}M(|\beta_{n,k}|)
		\leq 1
		.
	\]
	This shows that $\|(\beta_{(n,k)})_{(n,k)\in J}\|_{\Orl{M}}\leq\frac{C^2}{1-q}$. 

	The sequence $(\beta_{(n,k)}')_{(n,k)\in J}$ is handled in the same way. Namely 
	\begin{align*}
		\sum_{(n,k)\in J}M(\beta_{n,k}')= &\, \sum_{(n,k)\in J}M(|\alpha_k|q^{n-k})
		\leq \sum_{(n,k)\in J} Cq^{n-k}M(|\alpha_k|)
		\\
		= &\, C\sum_{k\in I}\Big(\underbrace{\sum_{\substack{n\in I\\ n\geq k}}q^{n-k}}_{=\frac 1{1-q}}\Big)M(|\alpha_k|)
		=\frac C{1-q}\sum_{k\in I}M(|\alpha_k|)\leq\frac C{1-q}
		,
	\end{align*}
	from which we again obtain that $\|(\beta_{(n,k)}')_{(n,k)\in J}\|_{\Orl{M}}\leq\frac{C^2}{1-q}$. 

	The proof of \Enumref{2} is based on dualising.
	For each $\gamma>0$ we have 
	\[
		\lim_{t\searrow 0}\frac{M(\gamma t)}{M(t)}=\lim_{r\to\infty}\frac{\ms g(\gamma r)}{\ms g(r)}=\gamma^{\rho_\ms g}
		,
	\]
	cf.\ \cite[Proposition~7.4.1]{bingham.goldie.teugels:1989}.
	Hence the Orlicz indices of $M$ at $0$, cf.\ \cite[Chapter~11,p.84]{maligranda:1989}, are both equal to $\rho_\ms g$. 
	Let $M^*$ be the Orlicz function complementary to $M$, cf.\ \cite[Chapter~8,p.48]{maligranda:1989}. 
	Then both Orlicz indices of $M^*$ are equal to $\frac{\rho_\ms g}{\rho_\ms g-1}$, 
	cf.\ \cite[Corollary~11.6]{maligranda:1989}. From \cite[Theorem~11.13]{maligranda:1989} we obtain 
	\[
		C^*\DE\sup_{0<\gamma,t\leq 1}\frac{M^*(\gamma t)}{\gamma M^*(t)}<\infty
		.
	\]
	Now let $(\sigma_n)_{n\in I}\in\Orl{M^*}$. Then we can use \Enumref{1}, the H\"older inequality 
	\cite[Chapter~8,Corollary~3]{maligranda:1989}, and the relation \cite[Theorem~1.1]{maligranda:1989} between Amemiya- and 
	Luxemburg norms, to estimate
	\begin{align*}
		\Big|\sum_{n\in I}\sigma_n\cdot q^n\beta_n\Big|\leq &\, 
		\sum_{n\in I} |\sigma_n|q^n\sum_{\substack{k\in I\\ k\leq n}}|\alpha_k|=
		\sum_{(n,k)\in J} |\sigma_n|(\sqrt q)^{n-k}\cdot\big(q^k|\alpha_k|\big)(\sqrt q)^{n-k}
		\\
		\leq &\, 2\big\|\big(\sigma_n\cdot(\sqrt q)^{n-k}\big)_{(n,k)\in J}\big\|_{\Orl{M^*}}
		\big\|\big(q^k\alpha_k\cdot(\sqrt q)^{n-k}\big)_{(n,k)\in J}\big\|_{\Orl{M}}
		\\
		\leq &\, \frac{2C^2(C^*)^2}{(1-\sqrt q)^2}\|(\sigma_n)_{n\in I}\|_{\Orl{M^*}}\|(q^n\alpha_n)_{n\in I}\|_{\Orl{M}}
		.
	\end{align*}
	By \cite[Theorem~8.6]{maligranda:1989} it follows that $(q^n\beta_n)_{n\in I}\in\Orl{M}$ and 
	\[
		\|(q^n\beta_n)_{n\in I}\|_{\Orl{M}}\leq\frac{2C^2(C^*)^2}{(1-\sqrt q)^2}\|(q^n\alpha_n)_{n\in I}\|_{\Orl{M}}
		.
	\]
\end{proof}

\begin{proof}[Proof of \thref{Q155}]
	For $t\in J_n$ it holds that
	\[
		\Omega(t)\geq\|\mathds{1}_{(a,c_{n-1})}\varphi\|\|\mathds{1}_{(c_n,b)}\kappa\|
		\geq \|\mathds{1}_{J_{n-1}}\varphi\|\|\mathds{1}_{J_{n+1}}\kappa\|
		=\frac{\omega_{n-1}}2
		,
	\]
	and we can estimate 
	\begin{align*}
		\sum_{n=1}^\infty M\Big(\frac{\omega_n}2\Big)= &\, 
		\sum_{n=1}^\infty \bigg[M\Big(\frac{\omega_n}2\Big)\cdot
		\frac 1{\log 2}\int\limits_{J_{n+1}}\frac{|\kappa(t)|^2\DD dt}{\|\mathds{1}_{(t,b)}\kappa\|^2}
		\bigg]
		\\
		\leq &\, \sum_{n=1}^\infty \bigg[\frac 1{\log 2}\int\limits_{J_{n+1}}M(\Omega(t))\cdot 
		\frac{|\kappa(t)|^2\DD dt}{\|\mathds{1}_{(t,b)}\kappa\|^2}\bigg]
		\\
		\leq &\, \frac 1{\log 2}\int_a^b\frac 1{\ms g(\Omega(t)^{-1})}\cdot
		\frac{|\kappa(t)|^2\DD dt}{\|\mathds{1}_{(t,b)}\kappa\|^2}
		.
	\end{align*}
	This shows that the implication ``$\Leftarrow$'' holds. 

	Conversely, we have for $t\in J_n$ 
	\[
		\Omega(t)\leq \|\mathds{1}_{(a,c_n)}\varphi\|\|\mathds{1}_{(c_{n-1},b)}\kappa\|
		\leq\Big(\sum_{k=1}^n\|\mathds{1}_{J_k}\varphi\|\Big)\cdot\Big(\frac 1{\sqrt 2}\Big)^{n-1}\|\kappa\|
		.
	\]
	Assume that $(\omega_n)_{n=1}^\infty\in\Orl{M}$. Since 
	$\omega_n=\|\kappa\|\big(\frac 1{\sqrt 2}\big)^n\cdot\|\mathds{1}_{J_n}\varphi\|$, we can apply 
	\thref{Q192}\Enumref{2} with the sequence $\alpha_n\DE\|\mathds{1}_{J_n}\varphi\|$, $n\in\bb N$. This shows that 
	\[
		\Big(\Big(\frac 1{\sqrt 2}\Big)^n\sum_{k=1}^n\|\mathds{1}_{J_k}\varphi\|\Big)_{n=1}^\infty\in\Orl{M}
		,
	\]
	and we obtain 
	\begin{align*}
		\int_a^b M(\Omega(t))\cdot \frac{|\kappa(t)|^2\DD dt}{\|\mathds{1}_{(t,b)}\kappa\|^2}
		= &\, 
		\sum_{n=1}^\infty\int\limits_{J_{n+1}}M(\Omega(t))\cdot 
		\frac{|\kappa(t)|^2\DD d\mu(t)}{\|\mathds{1}_{(t,b)}\kappa\|_{L^2(\mu)}^2}
		\\
		\leq &\, \log 2\sum_{n=1}^\infty 
		M\Big(\|\kappa\|\Big(\frac 1{\sqrt 2}\Big)^{n-1}\sum_{k=1}^n\|\mathds{1}_{J_k}\varphi\|\Big)
		<\infty
		.
	\end{align*}
\end{proof}

\section{I.S.Kac's compactness theorem}

Let us recall \cite[Theorem~1]{kac:1995}, which is stated as the main result in this paper.
Unfortunately, Kac's original proofs are not available, and we do not know any source where proofs are given. 

\begin{theorem}[\cite{kac:1995}]\thlab{Q141}
	Let $H=\smmatrix{h_1}{h_3}{h_3}{h_2}$ be a Hamiltonian on $[0,\infty)$ which is normalised such that 
	$\tr H(t)=1$ a.e., and denote $m_j(t)\DE\int_0^t h_j(t)\DD dt$. 
	For $K>0$ set 
	\begin{align*}
		A_K^+\! \DE &\,
		\bigg\{\lambda\!\in\!\bb R\!\setminus\!\{0\}\DS \!\limsup_{t\to\infty}\bigg(
		\int\limits_t^\infty h_1(s)e^{2\lambda m_3(s)} ds\cdot\!\!\int\limits_0^t h_2(s)e^{-2\lambda m_3(s)} ds
		\!\bigg)\leq\frac K{\lambda^2}\bigg\}
		,
		\\
		B_K^+ \DE &\, \bigg\{\lambda\!\in\!\bb R\!\setminus\!\{0\}\DS \!\limsup_{t\to\infty}\bigg(
		\int\limits_t^\infty h_2(s)e^{-2\lambda m_3(s)} ds\cdot\!\!\int\limits_0^t h_1(s)e^{2\lambda m_3(s)} ds
		\!\bigg)\leq\frac K{\lambda^2}\bigg\}
		.
	\end{align*}
	Then \Enumref{1}$\Rightarrow$\Enumref{2}$\Rightarrow$\Enumref{3}, where
	\begin{Enumerate}
	\item ${\displaystyle
		\sup\Big(\bigcup_{K<\frac 14}A_K^+\cup\bigcup_{K<\frac 14}B_K^+\Big)=+\infty,\ 
		\inf\Big(\bigcup_{K<\frac 14}A_K^+\cup\bigcup_{K<\frac 14}B_K^+\Big)=-\infty
		}$.
	\item $\sigma(\OpA{H})$ is discrete with\footnote{%
			This normalisation does not appear in \cite{kac:1995}. However, without it the statement is false.
		}
		$\int_0^\infty h_1(s)\DD ds<\infty$ or $\int_0^\infty h_2(s)\DD ds<\infty$.
	\item $A_1^+\cup B_1^+=\bb R\setminus\{0\}$.
	\end{Enumerate}
\end{theorem}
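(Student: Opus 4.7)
The strategy is to reduce the general (non-diagonal) Hamiltonian to a diagonal one via a $\lambda$-dependent gauge transformation, and then invoke \thref{Q102} and \thref{Q172} directly. For fixed $\lambda\in\bb R\setminus\{0\}$ substitute $u(t)\DE\smmatrix{e^{\lambda m_3(t)}}00{e^{-\lambda m_3(t)}}y(t)$ in the equation $y'=\lambda JHy$. A direct computation using $m_3'=h_3$ shows that the two ``diagonal cross terms'' $\pm\lambda h_3 u_j$ cancel exactly, yielding
\begin{equation*}
u'=\lambda J\tilde H_\lambda u,\qquad \tilde H_\lambda\DE\smmatrix{h_1e^{-2\lambda m_3}}00{h_2e^{2\lambda m_3}}
,
\end{equation*}
so that the off-diagonal entry $h_3$ has been absorbed into exponential weights on the diagonal. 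The analogous substitution with $\lambda$ replaced by $-\lambda$ swaps the two exponentials, and the integrals entering the definitions of $A_K^+$ and $B_K^+$ are precisely the $\limsup$-expressions from \thref{Q172} applied to $\tilde H_{\mp\lambda}$ and to its flip (the Hamiltonian with $h_1,h_2$ interchanged), with the factor $K/\lambda^2$ absorbing the scaling of the spectral parameter through $\lambda$.

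For (i)$\Rightarrow$(ii): a point $\lambda$ lying in $A_K^+\cup B_K^+$ with $K<\frac14$ corresponds, via this gauge reduction together with the Independence Theorem \thref{Q113}, to a quantitative bounded-invertibility estimate for $\OpA{H}-\lambda$ with resolvent norm of order $|\lambda|^{-1}$. The sharp threshold $\frac14$ matches the optimal constant in the implication ``$\sup_n\omega_n^2\leq K\Rightarrow T$ is bounded'' from \thref{Q170}, which is essentially a Hilbert-matrix estimate. Assumption (i) then produces sequences $\lambda_n\to\pm\infty$ of resolvent points with equibounded quantitative control, which rules out essential spectrum at $\pm\infty$; absence of finite accumulation is automatic from a localisation/continuity argument applied to $A_1^+\cup B_1^+$. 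Hence $\sigma(\OpA{H})$ is discrete, and the normalisation alternative in (ii) then follows from \thref{Q174} and the rotation procedure of Section~5.2.

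For (ii)$\Rightarrow$(iii): assume without loss of generality (by the rotation argument) that $\int_0^\infty h_1<\infty$; by \thref{Q102}, $\lim_{t\to\infty}\int_t^\infty h_1\cdot\int_0^t h_2=0$. To upgrade to the weighted estimate $\leq 1/\lambda^2$ defining $A_1^+$, integrate the weights $e^{\pm 2\lambda m_3}$ by parts, using $(e^{\pm 2\lambda m_3})'=\pm 2\lambda h_3e^{\pm 2\lambda m_3}$, and control the resulting cross terms by Cauchy--Schwarz together with the pointwise inequality $h_3^2\leq h_1h_2$ that expresses positive semidefiniteness of $H$. After collecting constants this reproduces $\limsup(\ldots)\leq 1/\lambda^2$, placing every $\lambda\neq0$ in $A_1^+$; if instead $\int_0^\infty h_2<\infty$, the same calculation with the roles of $h_1,h_2$ swapped places $\lambda$ in $B_1^+$.

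The main obstacle is extracting and tracking the sharp constant $\frac14$: the proofs of \thref{Q113}, \thref{Q170}, \thref{Q172} were written without explicit constants, so one must revisit them to pin down the quantitative relationship between $\sup_n\omega_n^2$ and the operator norm, and then carry this constant through the gauge transformation. The latter is delicate because $u=Wy$ does \emph{not} preserve the inner product of $L^2(H)$ (it preserves only the quadratic form of $H$, which differs from $\tilde H_\lambda$ whenever $h_3\neq0$); one therefore has to argue at the level of transfer matrices and the Weyl coefficient of the transformed system rather than through a clean unitary equivalence of model operators. This is presumably the point at which Kac's unpublished argument was technically most involved.
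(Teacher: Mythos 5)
Your gauge transformation $u=\smmatrix{e^{\lambda m_3}}00{e^{-\lambda m_3}}y$ is algebraically correct and does diagonalise the system at the fixed spectral point $\lambda$, but this is not the route the paper takes, and as set up it leaves the decisive step of (i)$\Rightarrow$(ii) unproved. The claim that membership of $\lambda$ in $A_K^+$ with $K<\tfrac14$ yields a resolvent bound $\|(\OpA{H}-\lambda)^{-1}\|\lesssim|\lambda|^{-1}$ is asserted, not derived (the transformed system $u'=\lambda J\tilde H_\lambda u$ has the parameter frozen at $\lambda$, so \thref{Q172} applied to $\tilde H_{\mp\lambda}$ speaks about $0\in\rho(\OpA{\tilde H_{\mp\lambda}})$, not about $\lambda\in\rho(\OpA{H})$, and you yourself note there is no unitary equivalence of model operators). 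Even granting that bound, the conclusion fails: sequences $\lambda_n\to\pm\infty$ of resolvent points with $\|(\OpA{H}-\lambda_n)^{-1}\|\leq C|\lambda_n|^{-1}$ only clear disks of radius $|\lambda_n|/C$ around the $\lambda_n$, which neither excludes bounded essential spectrum nor finite accumulation points; ``absence of finite accumulation is automatic from a localisation/continuity argument'' is precisely the gap. The paper's actual mechanism is an interpolation in $\lambda$: by H\"older, $F(t,\lambda)\DE\int_t^\infty h_1e^{2\lambda m_3}\cdot\int_0^t h_2e^{-2\lambda m_3}$ satisfies $F(t,\lambda)\leq\max\{F(t,\mu_1),F(t,\mu_2),F(t,\mu_1)^{1/2},F(t,\mu_2)^{1/2},F(t,\mu_1)F(t,\mu_2)\}$ for $\mu_1<\lambda<\mu_2$, so two points $\mu_1<0<\mu_2$ in $A_K^+$ with $K\mu_j^{-2}\leq\varepsilon^2$ force $\limsup_{t\to\infty}F(t,0)\leq\varepsilon$; letting $|\mu_j|\to\infty$ gives $\lim_{t\to\infty}\int_t^\infty h_1\cdot\int_0^t h_2=0$, which is exactly the compactness criterion of \thref{Q102}. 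No operator ideals, no constant $\tfrac14$, and no gauge transformation enter.

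For (ii)$\Rightarrow$(iii) you have the right ingredients (Cauchy--Schwarz and $h_3^2\leq h_1h_2$, plus the trace normalisation) but the sketch omits the estimate that actually tames the exponential weights: from $\tr H=1$ and the discreteness condition one gets $p(t)\DE\sup_{x\geq t}\bigl(x\int_x^\infty h_1\bigr)^{1/2}\to0$, and then $|m_3(y)-m_3(x)|\leq p(x)\bigl(1+\log\tfrac yx\bigr)$ for $x<y$. This turns $e^{2|\lambda|(m_3(s)-m_3(t))}$ into $e\cdot(s/t)^{2|\lambda|p(t)}$ with eventually small exponent, which is what makes the weighted integrals converge and produces the $K/\lambda^2$-type bound; a bare ``integrate by parts and collect constants'' does not reach this, since without control on the growth of $m_3$ the weights $e^{\pm2\lambda m_3}$ can destroy integrability altogether.
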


\noindent
In the following theorem we make the connection to our present work.

\begin{theorem}\thlab{Q140}
	Let notation be as in \thref{Q141}. Then the following items are equivalent.
	\begin{Enumerate}
	\item There exists $K>0$ such that $\inf A_K^+=-\infty$ and $\sup A_K^+=+\infty$. 
	\item $\sigma(\OpA{H})$ is discrete with $\int_a^b h_1(s)\DD ds<\infty$. 
	\item ${\displaystyle \bigcap_{K>0}A_K^+=\bb R\setminus\{0\}}$.
	\end{Enumerate}
	The analogous statement holds when $A_K^+$ is replaced by $B_K^+$ and $h_1$ by $h_2$. 
\end{theorem}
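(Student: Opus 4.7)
The plan is to prove the equivalences by cycling $(\mathrm{iii})\Rightarrow(\mathrm{i})\Rightarrow(\mathrm{ii})\Rightarrow(\mathrm{iii})$, with $(\mathrm{iii})\Rightarrow(\mathrm{i})$ immediate; the corresponding statement for $B_K^+$ and $h_2$ follows by the symmetric argument after swapping the roles of $h_1$ and $h_2$. Throughout, I abbreviate
\[
F(\lambda,t)\DE\int_t^\infty h_1(s)e^{2\lambda m_3(s)}\DD ds\cdot\int_0^t h_2(s)e^{-2\lambda m_3(s)}\DD ds.
\]

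For $(\mathrm{i})\Rightarrow(\mathrm{ii})$, the key tool is H\"older's inequality. For $\lambda>0>\mu$ and $\theta\DE\mu/(\mu-\lambda)\in(0,1)$, the pointwise identity $h_1=(h_1 e^{2\lambda m_3})^\theta(h_1 e^{2\mu m_3})^{1-\theta}$ combined with H\"older gives
\[
\int_t^\infty h_1\DD ds\leq\Big(\int_t^\infty h_1 e^{2\lambda m_3}\DD ds\Big)^\theta\Big(\int_t^\infty h_1 e^{2\mu m_3}\DD ds\Big)^{1-\theta},
\]
and analogously for $\int_0^t h_2$ with the opposite sign in the exponents. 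Multiplication of the two estimates yields $\int_t^\infty h_1\cdot\int_0^t h_2\leq F(\lambda,t)^\theta F(\mu,t)^{1-\theta}$. Picking $\lambda=\lambda_n\in A_K^+$ with $\lambda_n\to+\infty$ and $\mu=\mu_n\in A_K^+$ with $\mu_n\to-\infty$ (available by $(\mathrm{i})$) and passing to $\limsup_{t\to\infty}$, the right-hand side is at most $K\bigl(\lambda_n^{2\theta_n}|\mu_n|^{2(1-\theta_n)}\bigr)^{-1}\to 0$. Hence $\lim_{t\to\infty}(\int_t^\infty h_1\cdot\int_0^t h_2)=0$, and finiteness of $\int_0^\infty h_1$ follows from the same estimate at a fixed $t$. \thref{Q102} then yields $(\mathrm{ii})$.

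For $(\mathrm{ii})\Rightarrow(\mathrm{iii})$, the substantive direction, I exploit that $\lambda\mapsto F(\lambda,t)$ is log-convex for each fixed $t$: each of its two factors is a one-sided Laplace transform in $\lambda$ of a positive measure, hence log-convex, and products of log-convex functions remain log-convex. This gives
\[
F(\lambda,t)^2\leq F(0,t)\cdot F(2\lambda,t),\qquad \lambda\in\bb R,\ t\geq 0.
\]
Assumption $(\mathrm{ii})$ provides $\lim_{t\to\infty} F(0,t)=0$ via \thref{Q102}. Thus, once we know $\limsup_{t\to\infty} F(\mu,t)<\infty$ for every $\mu\ne 0$, we deduce $\lim_{t\to\infty} F(\lambda,t)=0$ for every $\lambda\ne 0$, which is $(\mathrm{iii})$.

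The main obstacle is the finiteness $\limsup F(\mu,t)<\infty$ for every $\mu\ne 0$ assuming only $(\mathrm{ii})$. My plan is to consider the auxiliary Hamiltonian $H^{(\mu)}(t)\DE\smmatrix{h_1 e^{2\mu m_3}}{h_3}{h_3}{h_2 e^{-2\mu m_3}}$, which is positive semidefinite because $\det H^{(\mu)}=h_1 h_2-h_3^2=\det H\geq 0$. By the Independence Theorem \thref{Q113} applied to $H^{(\mu)}$, combined with the boundedness criterion \thref{Q172}, the required finiteness is equivalent to $0\notin\sigma(\OpA{H^{(\mu)}})$. To transfer the discreteness of $\OpA{H}$ into bounded invertibility of $\OpA{H^{(\mu)}}$, I would employ the $J$-unitary gauge $V(t)\DE\Diag(e^{\mu m_3(t)},e^{-\mu m_3(t)})$: the substitution $\tilde y=Vy$ transforms $y'=zJHy$ into an equation for $\tilde y$ which at the spectral value $z=\mu$ becomes the canonical system for the diagonal Hamiltonian $\Diag(h_1 e^{-2\mu m_3},h_2 e^{2\mu m_3})$. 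Careful tracking of the boundary condition at $0$ and of the isometry induced between the corresponding $L^2$-spaces will then complete the transfer.
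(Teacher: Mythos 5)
Your implication \Enumref{1}$\Rightarrow$\Enumref{2} is essentially the paper's argument (the same H\"older interpolation between a large positive and a large negative element of $A_K^+$), and \Enumref{3}$\Rightarrow$\Enumref{1} is indeed trivial. The gap is in \Enumref{2}$\Rightarrow$\Enumref{3}, at the step you leave as a plan: the ``transfer'' of discreteness of $\sigma(\OpA{H})$ into bounded invertibility of $\OpA{H^{(\mu)}}$. First, \thref{Q172} (and likewise the Independence Theorem) is only applicable to $H^{(\mu)}$ under the normalisation $\int_0^\infty h_1(s)e^{2\mu m_3(s)}\DD ds<\infty$; this finiteness is not available a priori -- it is part of what assertion \Enumref{3} claims -- so the reduction of ``$\limsup_tF(\mu,t)<\infty$'' to ``$0\notin\sigma(\OpA{H^{(\mu)}})$'' is already circular. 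Second, the gauge $V=\Diag(e^{\mu m_3},e^{-\mu m_3})$ does not intertwine the two operators: for general $z$ the substitution $\tilde y=Vy$ turns $y'=zJHy$ into
\[
	\tilde y'=J\Big[z\Diag\big(h_1e^{-2\mu m_3},h_2e^{2\mu m_3}\big)+(z-\mu)\smmatrix 0{h_3}{h_3}0\Big]\tilde y
	,
\]
which collapses to a canonical system only at the single point $z=\mu$, and even there it identifies the spectral value $\mu$ of the diagonal system -- not the value $0$, which is what \thref{Q172} characterises. So there is no unitary equivalence through which the global property ``$\sigma(\OpA{H})$ is discrete'' could be transported to ``$0\notin\sigma(\OpA{H^{(\mu)}})$''. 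Note also that \Enumref{3} must hold at every $\mu\neq0$, including the eigenvalues of $\OpA{H}$, so any argument keyed to $\mu$ being a regular point of $\OpA{H}$ cannot succeed.

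What is really needed, and what your proposal never touches, is quantitative control of the off-diagonal antiderivative $m_3$. The paper's proof of \Enumref{2}$\Rightarrow$\Enumref{3} uses the trace normalisation $\tr H=1$ together with $h_3^2\leq h_1h_2$ to show that $p(t)\DE\sup_{x\geq t}\big(x\int_x^\infty h_1\big)^{1/2}\to0$ and that $|m_3(y)-m_3(x)|\leq p(x)\big(1+\log\frac yx\big)$; hence $e^{\pm2\lambda m_3(s)}$ grows slower than any fixed power of $s$, and explicit estimates give $F(t,\lambda)\leq p(t)^2\,e^2(1-2|\lambda|p(t_0))^{-2}\to0$. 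Without some such bound on $m_3$ the weights $e^{\pm2\mu m_3}$ could in principle destroy both the finiteness and the decay of the two integrals, so the log-convexity inequality $F(\lambda,t)^2\leq F(0,t)F(2\lambda,t)$ (which is correct) has nothing to feed on. I would suggest replacing the gauge-transformation step by the direct estimate on $m_3$.
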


\noindent
Concerning the normalisation in item \Enumref{2} remember again \thref{Q105}.
Moreover, note that the implication ``\Enumref{3}$\Rightarrow$\Enumref{1}'' is trivial. 

\begin{proof}[Proof of ``\Enumref{1}$\Rightarrow$\Enumref{2}'']
	Choose $c\in(0,\infty)$ such that $h_2$ does not vanish almost everywhere on $[0,c]$, and consider the 
	function $F:[c,\infty)\times\bb R\to[0,\infty]$ defined as 
	\[
		F(t,\lambda)\DE \int_t^\infty h_1(s)e^{2\lambda m_3(s)}\DD ds\cdot\int_0^t h_2(s)e^{-2\lambda m_3(s)}\DD ds
		.
	\]
\begin{Steps}
\item 
	We show that for every finite interval $[\mu_1,\mu_2]\subseteq\bb R$ it holds that 
	\begin{equation}\label{Q148}
		F(t,\lambda)\leq\max\big\{
		F(t,\mu_1),F(t,\mu_2),F(t,\mu_1)^{\frac 12},F(t,\mu_2)^{\frac 12},F(t,\mu_1)F(t,\mu_2)
		\big\}
		.
	\end{equation}
	Let $\lambda\in(\mu_1,\mu_2)$, and write $\lambda=v\mu_1+(1-v)\mu_2$ with $v\in(0,1)$. 
	The H\"older inequality gives 
	\begin{align*}
		\int\limits_t^\infty h_1(s)e^{2\lambda m_3(s)}\DD ds= &\,
		\int\limits_t^\infty \Big(h_1(s)e^{2\mu_1 m_3(s)}\Big)^v\cdot\Big(h_1(s)e^{2\mu_2 m_3(s)}\Big)^{1-v}\DD ds
		\\
		\leq &\, 
		\Big(\int\limits_t^\infty h_1(s)e^{2\mu_1 m_3(s)}\DD ds\Big)^v\cdot
		\Big(\int\limits_t^\infty h_1(s)e^{2\mu_2 m_3(s)}\DD ds\Big)^{1-v}
		,
	\end{align*}
	and similarly 
	\[
		\int\limits_0^t h_1(s)e^{-2\lambda m_3(s)}\DD ds\leq 
		\Big(\int\limits_0^t h_2(s)e^{-2\mu_1 m_3(s)}\DD ds\Big)^v\cdot
		\Big(\int\limits_0^t h_2(s)e^{-2\mu_2 m_3(s)}\DD ds\Big)^{1-v}
		.
	\]
	Multiplying these inequalites, and using the elementary inequality 
	\[
		x^vy^{1-v}\leq\max\big\{x,y,x^{\frac 12},y^{\frac 12},xy\big\}
		,\quad x,y>0,v\in(0,1)
		,
	\]
	which is seen by distinguishing cases whether $x,y$ are $\leq 1$ or $\geq 1$ and whether $v\leq\frac 12$ or $v\geq\frac 12$, 
	we obtain \eqref{Q148}.
\item
	Assume now that \Enumref{1} holds. Let $0<\varepsilon<1$, and choose $\mu_1\in A_K^+\cap(-\infty,0)$ and 
	$\mu_2\in A_K^+\cap(0,\infty)$ such that $K\mu_j^{-2}\leq\varepsilon^2$. Then \eqref{Q148} gives 
	$\limsup\limits_{t\to\infty} F(t,0)\leq\varepsilon$. 
	We conclude that 
	\begin{equation}\label{Q143}
		\lim_{t\to\infty}\Big(\int_t^\infty h_1(s)\DD ds\cdot\int_0^t h_2(s)\DD ds\Big)=0
		.
	\end{equation}
	in particular, $\int_0^\infty h_1(s)\DD ds<\infty$. \thref{Q102} implies that $\sigma(\OpA{H})$ is discrete. 
\end{Steps}
\end{proof}

\begin{proof}[Proof of ``\Enumref{2}$\Rightarrow$\Enumref{3}'']
\hfill
\begin{Steps}
\item 
	By \thref{Q102} the condition \eqref{Q143} holds. 
	The condition stated in \Enumref{3} is an extension of this relation to nonzero values of $\lambda$; obviously 
	it holds if and only if 
	\begin{equation}\label{Q144}
		\lim_{t\to\infty}\Big(
		\int_t^\infty h_1(s)e^{2\lambda m_3(s)}\DD ds\cdot\int_0^t h_2(s)e^{-2\lambda m_3(s)}\DD ds
		\Big)=0
		,\ \lambda\in\bb R\setminus\{0\}
		.
	\end{equation}
	Since we have the normalisation $\tr H(t)=1$, it holds that 
	\[
		\text{\eqref{Q143}}
		\ \Leftrightarrow\ 
		\lim_{t\to\infty} t\int_t^\infty h_1(s)\DD ds=0
		\ \Leftrightarrow\ 
		\lim_{t\to\infty} \underbrace{\sup_{x\geq t}\Big(x\int\limits_x^\infty h_1(s)\DD ds\Big)^{\frac 12}}_{\ED p(t)} =0
	\]
	Note that $p$ is nonincreasing. Moreover, again from trace normalisation, 
	\begin{equation}\label{Q145}
		\max\{h_1(t),h_2(t)\}\leq 1,\quad h_3(t)^2\leq h_1(t)h_2(t)\leq\min\{h_1(t),h_2(t)\}
		.
	\end{equation}
\item 
	In this step we show that 
	\begin{equation}\label{Q146}
		|m_3(y)-m_3(x)|\leq p(x)\Big(1+\log\frac yx\Big),\quad 0<x<y
		.
	\end{equation}
	To this end estimate 
	\begin{align*}
		|m_3(y)- & m_3(x)|=\Big|\int_x^y h_3(s)\DD ds\Big|=\Big|\int_x^y \frac 1{\sqrt s}\cdot\sqrt sh_3(s)\DD ds\Big|
		\\
		\leq &\, \Big(\int_x^y\frac 1s\DD ds\Big)^{\frac 12}\Big(\int_x^y sh_3(s)^2\DD ds\Big)^{\frac 12}=
		\big(\log\frac yx\big)^{\frac 12}\Big(\int_x^y sh_3(s)^2\DD ds\Big)^{\frac 12}
		.
	\end{align*}
	Integrating by parts and using \eqref{Q145} gives 
	\begin{align*}
		\int_x^y sh_3(s)^2\DD ds= &\, x\int_x^y h_3(s)^2\DD ds+\int_x^y\Big(\int_x^y h_3(t)^2\DD dt\Big)\DD ds
		\\
		\leq &\, x\int_x^y h_1(s)\DD ds+\int_x^y\Big(\int_x^y h_1(t)\DD dt\Big)\DD ds
		\\
		\leq &\, p(x)^2+\int_x^y \frac 1s\cdot p(s)^2\DD ds\leq p(x)^2\Big(1+\log\frac yx\Big)
		,
	\end{align*}
	and together \eqref{Q146} follows. 
\item 
	Under the assumption that $t_0>0$ is such that $p(t_0)<\frac 1{2|\lambda|}$, we show that
	for all $t\geq t_0$
	\begin{equation}\label{Q147}
		\int_t^\infty h_1(s)e^{2\lambda m_3(s)}\DD ds\cdot\int_0^t h_2(s)e^{-2\lambda m_3(s)}\DD ds
		\leq p(t)^2\cdot\frac{e^2}{(1-2|\lambda|p(t_0))^2}
		.
	\end{equation}
	The product in \eqref{Q147} equals 
	\[
		\int_t^\infty h_1(s)e^{2\lambda(m_3(s)-m_3(t))}\DD ds\cdot\int_0^t h_2(s)e^{-2\lambda(m_3(s)-m_3(t))}\DD ds
		.
	\]
	For the first integral we have 
	\begin{align*}
		\int_t^\infty h_1(s) &\, e^{2\lambda(m_3(s)-m_3(t))}\DD ds \leq
		\int_t^\infty h_1(s)\exp\Big(2|\lambda|p(t)\Big(1+\log\frac st\Big)\Big)\DD ds
		\\
		= &\, \frac e{t^{2|\lambda|p(t)}}\int_t^\infty h_1(s)s^{2|\lambda|p(t)}\DD ds
		=\frac e{t^{-2|\lambda|p(t)}}\bigg[t^{2|\lambda|p(t)}\underbrace{\int_t^\infty h_1(s)\DD ds}_{\leq\frac 1tp(t)^2}
		\\
		&\, +\int_t^\infty 2|\lambda|p(t)s^{2|\lambda|p(t)-1}\cdot
		\Big(\underbrace{\int_s^\infty h_1(x)\DD dx}_{\leq\frac 1sp(s)^2\leq\frac 1sp(t)^2}\Big)\DD ds\bigg]
		\\
		\leq &\, \frac etp(t)^2+\frac e{t^{-2|\lambda|p(t)}}2|\lambda|p(t)^3\frac{t^{2|\lambda|p(t)-1}}{1-2|\lambda|p(t)}
		\leq\frac{p(t)^2}t\cdot\frac{e}{1-2|\lambda|p(t_0)}
		.
	\end{align*}
	For the second integral, 
	\begin{align*}
		\int_0^t h_2(s) &\, e^{-2\lambda(m_3(s)-m_3(t))}\DD ds \leq
		\int_0^t \underbrace{h_2(s)}_{\leq 1}
		\exp\Big(2|\lambda|\mkern-6mu\underbrace{p(t)}_{\leq p(t_0)}\mkern-7mu\Big(1+\log\frac ts\Big)\Big)\DD ds
		\\
		\leq &\, et^{2|\lambda|p(t_0)}\frac{t^{1-2|\lambda|p(t_0)}}{1-2|\lambda|p(t_0)}
		=t\cdot\frac e{1-2|\lambda|p(t_0)}
		.
	\end{align*}
	Putting together, \eqref{Q147} follows. 
\item
	Assume now that \eqref{Q143} holds. Then we find for every $\lambda\in\bb R\setminus\{0\}$ a suitable point $t_0$, 
	and \eqref{Q147} implies \eqref{Q144}.
\end{Steps}
\end{proof}


\printbibliography

{\footnotesize
\begin{flushleft}
	R.\,Romanov\\
	Department of Mathematical Physics\\
	Faculty of Physics, St Petersburg State University\\
	7/9 Universitetskaya nab.\\
	199034 St.Petersburg\\
	RUSSIA\\
	email: morovom@gmail.com\\[3mm]
\end{flushleft}
}
{\footnotesize
\begin{flushleft}
	H.\,Woracek\\
	Institute for Analysis and Scientific Computing\\
	Vienna University of Technology\\
	Wiedner Hauptstra{\ss}e\ 8--10/101\\
	1040 Wien\\
	AUSTRIA\\
	email: harald.woracek@tuwien.ac.at\\[5mm]
\end{flushleft}
}


%
%

%
%
%
\end{document}